\newtheorem{thm}{Theorem}[section]
\newtheorem{prop}[thm]{Proposition}
\newtheorem{cor}[thm]{Corollary}
\theoremstyle{definition}
\newtheorem{defi}[thm]{Definition}
\newtheorem{exe}[thm]{Example}
\title[coherent distributions on the square]{coherent distributions on the square \\ -- Extreme points and asymptotics }
\author{Stanis\l{}aw Cichomski}
\address{Department of Mathematics, Informatics and Mechanics\\
 University of Warsaw\\
Banacha 2, 02-097 Warsaw\\
Poland}
\author{Adam Os\k{e}kowski}
\address{Department of Mathematics, Informatics and Mechanics\\
 University of Warsaw\\
Banacha 2, 02-097 Warsaw\\
Poland}
\numberwithin{equation}{section}
\begin{document}

\begin{abstract} Let $\mathcal{C}$ denote the family of all coherent distributions on the unit square $[0,1]^2$, i.e. all those probability measures $\mu$ for which there exists a random vector $(X,Y)\sim \mu$, a pair $(\mathcal{G},\mathcal{H})$ of $\sigma$-fields and an event $E$ such that $X=\mathbb{P}(E|\mathcal{G})$, $Y=\mathbb{P}(E|\mathcal{H})$ almost surely. In this paper we examine the set $\mathrm{ext}(\mathcal{C})$  of extreme points of $\mathcal{C}$ and provide its general characterisation. Moreover, we establish several structural properties of finitely-supported elements of $\mathrm{ext}(\mathcal{C})$. We apply these results to obtain the asymptotic sharp bound
$$\lim_{\alpha \to \infty} \alpha\cdot \Big(\sup_{(X,Y)\in \mathcal{C}}\mathbb{E}|X-Y|^{\alpha}\Big)  =  \frac{2}{e}.$$ \end{abstract}

\maketitle

\section{Introduction} 
Let $\mu$ be a probability measure on the unit square $[0,1]^2$. Following \cite{C1}, this measure is called \emph{coherent}, if it is the joint distribution of a two-variate random vector $(X,Y)$ defined on some arbitrary probability space $(\Omega, \mathcal{F}, \mathbb{P})$, such that 
$$X=\mathbb{P}(E|\mathcal{G}) \ \ \ \ \text{and} \ \ \ \ Y=\mathbb{P}(E|\mathcal{H}), \ \ \ \  \text{almost surely,}$$
for some measurable event $E\in \mathcal{F}$ and some two sub-$\sigma$-fields $\mathcal{G}, \mathcal{H} \subset \mathcal{F}$. Throughout the text, the class of all coherent probability measures will be denoted by $\mathcal{C}$; for the sake of convenience (and with a slight abuse of notation), we will also write $(X, Y) \in \mathcal{C}$ to indicate that the distribution of a random vector $(X, Y)$ is coherent. 

Coherent measures enjoy the following nice interpretation. Suppose that two experts provide their personal estimates on the likelihood of some random event $E$, and assume that the knowledge of the first and the second expert is represented by the $\sigma$-algebras $\mathcal{G}$ and $\mathcal{H}$, respectively. Then a natural idea to model the predictions of the experts is to use conditional expectations: this leads to the random variables $X$ and $Y$ as above.

 The importance of coherent distributions stem from their numerous applications in statistics (cf. \cite{C1, C2, C4, C3}) and economics  (consult \cite{B4, B2, B1, B3}). 
Coherent distributions are also closely related to graph theory and combinatorial matrix theory, see for instance \cite{GaleRyser, mastersthesis, BPC, tao}.
Moreover, there has been a substantial purely probabilistic advancement on this subject during the last decade, see \cite{contra, pitman, EJP, contra2, kDoob, zhu}. The main interest, both in applied and theoretical considerations, involves bounding the maximal discrepancy of coherent vectors measured by different functionals. A canonical result of this type is the following threshold bound of Burdzy and Pal \cite{contra}.

\begin{thm} \label{contr} For any parameter $\delta \in (\frac{1}{2},1]$, we have
\begin{equation} \label{Eq1}\sup_{(X,Y)\in \mathcal{C}} \mathbb{P}(|X-Y|\ge \delta) = \frac{2(1-\delta)}{2-\delta}. \end{equation} \end{thm}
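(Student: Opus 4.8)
The plan is to prove the two inequalities separately: a universal upper bound $\mathbb{P}(|X-Y|\ge\delta)\le\frac{2(1-\delta)}{2-\delta}$ valid for \emph{every} coherent pair, and a matching lower bound obtained from a single explicit coherent distribution, so that the supremum is in fact attained.

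For the upper bound, the engine is the following elementary duality estimate. If $X=\mathbb{P}(E\mid\mathcal{G})$, $Y=\mathbb{P}(E\mid\mathcal{H})$ and $Z=\mathbf{1}_E$, then for all bounded $\phi,\psi\colon[0,1]\to\mathbb{R}$ we have $\mathbb{E}[\phi(X)Z]=\mathbb{E}[\phi(X)X]$ and $\mathbb{E}[\psi(Y)Z]=\mathbb{E}[\psi(Y)Y]$, by pulling out the $\mathcal{G}$- (resp. $\mathcal{H}$-) measurable factor; since $0\le Z\le1$ this yields
$$\mathbb{E}\big[\phi(X)X+\psi(Y)Y\big]=\mathbb{E}\big[(\phi(X)+\psi(Y))Z\big]\le\mathbb{E}\big[(\phi(X)+\psi(Y))^{+}\big].$$
I would apply this with $\psi=\phi$, where $\phi(t)=-1$ on $[0,1-\delta]$, $\phi(t)=0$ on $(1-\delta,\delta)$, and $\phi(t)=1$ on $[\delta,1]$ (a legitimate partition since $1-\delta<\delta$). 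Writing $F(x,y)=\phi(x)x+\phi(y)y-(\phi(x)+\phi(y))^{+}$, the displayed inequality says $\mathbb{E}[F(X,Y)]\le0$. A short case check over the six region-combinations of $(x,y)$ then gives the two pointwise bounds: $F(x,y)\ge\delta$ whenever $|x-y|\ge\delta$ (in that regime $x$ and $y$ necessarily lie in the two extreme intervals, $\phi(x)+\phi(y)=0$, and $F(x,y)=|x-y|$), and $F(x,y)\ge-2(1-\delta)$ for every $(x,y)\in[0,1]^{2}$ (the only minimisers being $x=y=1-\delta$ and $x=y=\delta$, both giving $-2(1-\delta)$). Feeding these into $\mathbb{E}[F(X,Y)]\le0$ produces $\delta\,\mathbb{P}(|X-Y|\ge\delta)-2(1-\delta)\,\mathbb{P}(|X-Y|<\delta)\le0$, and solving for $\mathbb{P}(|X-Y|\ge\delta)$ gives the asserted bound $\frac{2(1-\delta)}{2-\delta}$.

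For the lower bound I would exhibit an extremiser. Set $a=\frac{1-\delta}{2-\delta}\in(0,\tfrac12)$, take $\Omega=[0,1]$ with Lebesgue measure, $E=[0,a)\cup[1-a,1]$, $\mathcal{G}=\sigma([0,a))$ and $\mathcal{H}=\sigma([0,1-a))$. Computing conditional expectations on the (three) atoms of $\mathcal{G}\vee\mathcal{H}$ gives $X=1$ on $[0,a)$ and $X=\frac{a}{1-a}=1-\delta$ on $[a,1]$, while $Y=1-\delta$ on $[0,1-a)$ and $Y=1$ on $[1-a,1]$; hence $(X,Y)$ equals $(1,1-\delta)$ on $[0,a)$, $(1-\delta,1-\delta)$ on $[a,1-a)$, and $(1-\delta,1)$ on $[1-a,1]$. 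Consequently $|X-Y|=\delta$ on a set of measure $2a=\frac{2(1-\delta)}{2-\delta}$, so $\mathbb{P}(|X-Y|\ge\delta)=\frac{2(1-\delta)}{2-\delta}$, matching the upper bound.

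The routine ingredients are the conditional-expectation identity and the six-case verification of the pointwise bounds on $F$; the one genuinely creative step is guessing the three-valued test function $\phi$ — equivalently, guessing that the extremal law sits on $(1,1-\delta)$, $(1-\delta,1-\delta)$, $(1-\delta,1)$ — after which everything is forced. I expect the clause $F\ge-2(1-\delta)$ on \emph{all} of $[0,1]^{2}$ (not merely on the support of the candidate extremiser) to be the delicate point, since that is precisely where a two-valued $\phi$ would fail and where the choice of the thresholds $1-\delta$ and $\delta$ together with the intermediate value $0$ is essential.
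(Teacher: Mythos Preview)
Your argument is correct. Note, however, that the paper does \emph{not} prove Theorem~\ref{contr}: it is quoted from Burdzy and Pal \cite{contra} as background motivation, with no proof given. There is thus nothing in the paper to compare your proposal against.

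As a self-contained proof your approach is sound. The duality inequality $\mathbb{E}[\phi(X)X+\psi(Y)Y]\le\mathbb{E}[(\phi(X)+\psi(Y))^{+}]$ is valid because $Z=\mathbf{1}_{E}\in\{0,1\}$ forces $(\phi(X)+\psi(Y))Z\le(\phi(X)+\psi(Y))^{+}$ pointwise. With your three-valued $\phi$ the case analysis for $F(x,y)=\phi(x)x+\phi(y)y-(\phi(x)+\phi(y))^{+}$ checks out: on $\{|x-y|\ge\delta\}$ one coordinate lies in $[0,1-\delta]$ and the other in $[\delta,1]$, so $\phi(x)+\phi(y)=0$ and $F=|x-y|\ge\delta$; the global lower bound $F\ge-2(1-\delta)$ is attained exactly at $(1-\delta,1-\delta)$ and $(\delta,\delta)$, and all mixed cases give strictly larger values. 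The three-atom construction with $a=\tfrac{1-\delta}{2-\delta}$ is a genuine coherent pair and hits the bound. The only cosmetic point is that at $\delta=1$ your extremiser degenerates ($a=0$), but there the upper bound already reads $\mathbb{P}(|X-Y|\ge1)\le0$, which any coherent pair attains trivially.
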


For a generalisation of (\ref{Eq1}) to $n$-variate coherent vectors, consult \cite{contra2}. Another important example is the expectation bound established independently  in \cite{B1, mastersthesis}.

\begin{thm} For any exponent $\alpha\in (0,2]$, we have 
\begin{equation} \label{Eq2}\sup_{(X,Y)\in \mathcal{C}}\mathbb{E}|X-Y|^{\alpha}   =   2^{-\alpha}.\end{equation}
\end{thm}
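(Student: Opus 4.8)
The plan is to prove separately that $2^{-\alpha}$ is a lower bound and an upper bound for $\sup_{(X,Y)\in\mathcal{C}}\mathbb{E}|X-Y|^\alpha$. For the lower bound I would exhibit a single coherent pair attaining the value: on a probability space carrying a random variable $X$ with $\mathbb{P}(X=0)=\mathbb{P}(X=1)=\tfrac12$, put $E=\{X=1\}$, $\mathcal{G}=\sigma(X)$, and let $\mathcal{H}$ be trivial. Then $X=\mathbb{P}(E\mid\mathcal{G})$ and $\tfrac12=\mathbb{P}(E\mid\mathcal{H})=:Y$, so $(X,Y)\in\mathcal{C}$, while $|X-Y|\equiv\tfrac12$ gives $\mathbb{E}|X-Y|^\alpha=2^{-\alpha}$. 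Hence $\sup_{(X,Y)\in\mathcal{C}}\mathbb{E}|X-Y|^\alpha\ge2^{-\alpha}$ for every $\alpha>0$.

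For the upper bound I would first reduce to $\alpha=2$: since $t\mapsto t^{\alpha/2}$ is concave on $[0,\infty)$ when $\alpha\in(0,2]$, Jensen's inequality gives $\mathbb{E}|X-Y|^{\alpha}=\mathbb{E}\big[((X-Y)^2)^{\alpha/2}\big]\le\big(\mathbb{E}(X-Y)^2\big)^{\alpha/2}$, so it suffices to show $\mathbb{E}(X-Y)^2\le\tfrac14$ for an arbitrary coherent pair $(X,Y)$ with associated event $E$ and sub-$\sigma$-fields $\mathcal{G},\mathcal{H}$. The key tool is a family of vanishing expectations: for every bounded measurable $g\colon[0,1]\to\mathbb{R}$, because $g(X)$ is $\mathcal{G}$-measurable and $X=\mathbb{E}[\mathbf{1}_E\mid\mathcal{G}]$, conditioning on $\mathcal{G}$ yields $\mathbb{E}[g(X)(\mathbf{1}_E-X)]=0$, and symmetrically $\mathbb{E}[g(Y)(\mathbf{1}_E-Y)]=0$. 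Taking $g(t)=2t-1$ and adding these two null terms,
\[
\mathbb{E}(X-Y)^2=\mathbb{E}\Big[(X-Y)^2+(2X-1)(\mathbf{1}_E-X)+(2Y-1)(\mathbf{1}_E-Y)\Big].
\]
It then remains to verify a pointwise bound on the integrand: for all $x,y\in[0,1]$ and $e\in\{0,1\}$, a direct expansion shows that, with $s:=x+y$,
\[
(x-y)^2+(2x-1)(e-x)+(2y-1)(e-y)=
\begin{cases}
s-s^2=\tfrac14-(s-\tfrac12)^2, & e=0,\\
-s^2+3s-2=\tfrac14-(s-\tfrac32)^2, & e=1,
\end{cases}
\]
each of which is at most $\tfrac14$. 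Since $\mathbf{1}_E\in\{0,1\}$ pointwise, the integrand is $\le\tfrac14$ almost surely, whence $\mathbb{E}(X-Y)^2\le\tfrac14$ and the upper bound follows.

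The only non-mechanical step is guessing the correct test function. Crude corrections — an affine $g$ of the "wrong" slope, or the inequality $X^2\le X$ — only produce $\mathbb{E}(X-Y)^2\le\tfrac12$, off by a factor of two; it is precisely the choice $g(t)=2t-1$ in both coordinates that makes the two-variable integrand collapse to a concave function of $s=x+y$ with maximum exactly $\tfrac14$, i.e. the dual certificate read off from the extremal configuration $X\in\{0,1\}$, $Y\equiv\tfrac12$. (A slicker but less elementary route to $\mathbb{E}(X-Y)^2\le\tfrac14$ writes $X-Y=(P_{\mathcal{G}}-P_{\mathcal{H}})\mathbf{1}_E=(P_{\mathcal{G}}-P_{\mathcal{H}})(\mathbf{1}_E-\mathbb{P}(E))$, with $P_{\mathcal{G}},P_{\mathcal{H}}$ the orthogonal projections of $L^2$ onto $L^2(\mathcal{G}),L^2(\mathcal{H})$, and uses that a difference of two orthogonal projections has operator norm at most $1$, so $\mathbb{E}(X-Y)^2\le\operatorname{Var}(\mathbf{1}_E)=\mathbb{P}(E)(1-\mathbb{P}(E))\le\tfrac14$.) The concavity reduction from general $\alpha\le2$ to $\alpha=2$ and the two one-line pointwise computations are routine.
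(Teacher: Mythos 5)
Your proof is correct and complete. Note that the paper does not actually prove this theorem: it is quoted as a known result with a pointer to the literature (the references \cite{B1, mastersthesis}), so there is no in-text argument to compare against. Your two-step structure is the standard one for this bound: the lower bound via the configuration $X\in\{0,1\}$, $Y\equiv\tfrac12$ is exactly the extremal example, and the reduction to $\alpha=2$ by concavity of $t\mapsto t^{\alpha/2}$ is the right way to handle all $\alpha\in(0,2]$ at once. The core inequality $\mathbb{E}(X-Y)^2\le\tfrac14$ is correctly certified: the orthogonality relations $\mathbb{E}[g(X)(\mathbf{1}_E-X)]=\mathbb{E}[g(Y)(\mathbf{1}_E-Y)]=0$ are valid since $g(X)$ is $\mathcal G$-measurable and $X=\mathbb{E}[\mathbf{1}_E\mid\mathcal G]$, and your pointwise computation $-s^2+(2e+1)s-2e\le\tfrac14$ for $s=x+y$, $e\in\{0,1\}$ checks out. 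The alternative operator-theoretic argument via $\|P_{\mathcal G}-P_{\mathcal H}\|\le 1$ and $\operatorname{Var}(\mathbf{1}_E)\le\tfrac14$ is also sound and arguably the cleanest route. This is a fully self-contained proof of a statement the paper only cites.
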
 

The analysis of the left-hand side of (\ref{Eq2}) for $\alpha>2$ remains a major open problem and constitutes one of the main motivations for this paper. Accordingly, we investigate the asymptotic behavior of this expression and derive an appropriate sharp estimate.

\begin{thm} \label{2/e} We have
\begin{equation} \label{asym} \lim_{\alpha \to \infty} \alpha\cdot \Big(\sup_{(X,Y)\in \mathcal{C}}\mathbb{E}|X-Y|^{\alpha}\Big)  =  \frac{2}{e}.\end{equation}
\end{thm}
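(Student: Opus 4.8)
The plan is to establish the two matching bounds separately. For the upper bound, I would first note that $\sup_{(X,Y)\in\mathcal C}\mathbb E|X-Y|^\alpha$ is attained (or approached) on extreme points of $\mathcal C$, since $(X,Y)\mapsto \mathbb E|X-Y|^\alpha$ is a linear functional of $\mu$; by the structural results on finitely-supported elements of $\mathrm{ext}(\mathcal C)$ promised in the abstract, it suffices to bound $\mathbb E|X-Y|^\alpha$ over such finitely-supported extreme coherent distributions. On these, one has exact combinatorial control of the atoms and their masses. The key analytic input is that $|x-y|^\alpha$ is tiny unless $|x-y|$ is close to $1$, and the threshold bound (\ref{Eq1}) of Burdzy--Pal controls exactly how much mass can sit at distance $\ge\delta$. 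Quantitatively, writing $p(\delta)=\mathbb P(|X-Y|\ge\delta)\le \tfrac{2(1-\delta)}{2-\delta}$, a layer-cake / integration-by-parts argument gives
\begin{equation*}
\mathbb E|X-Y|^\alpha = \int_0^1 \alpha \delta^{\alpha-1} p(\delta)\,d\delta \le \int_0^1 \alpha \delta^{\alpha-1}\,\frac{2(1-\delta)}{2-\delta}\,d\delta,
\end{equation*}
and the task reduces to showing $\alpha$ times the right-hand side tends to $2/e$. As $\alpha\to\infty$ the mass of $\alpha\delta^{\alpha-1}d\delta$ concentrates near $\delta=1$; substituting $\delta=1-t/\alpha$ and using $\tfrac{2(1-\delta)}{2-\delta}\approx 2(1-\delta)=2t/\alpha$ and $\alpha\delta^{\alpha-1}\approx \alpha e^{-t}$ yields $\alpha\cdot\mathbb E|X-Y|^\alpha \lesssim \int_0^\infty \alpha e^{-t}\cdot \tfrac{2t}{\alpha}\,dt \cdot(1+o(1)) = 2\cdot(1+o(1))$ — wait, this gives $2$, not $2/e$, so the crude bound (\ref{Eq1}) is not tight enough near $\delta=1$ at this precision and must be replaced by the genuinely sharp extreme-point analysis, which is where the factor $1/e$ will emerge.

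For the lower bound I would construct an explicit family of coherent vectors and compute the limit directly. A natural candidate is a two- or three-atom distribution: for instance take $(X,Y)$ supported so that with small probability $q$ the pair equals $(1,0)$ or $(0,1)$ (contributing $q$ to $\mathbb E|X-Y|^\alpha$) while the coherence constraints — $\mathbb E[X\mathbbm 1_E$-type identities, i.e. $X$ and $Y$ are both conditional probabilities of the same event — force a relation between $q$ and the remaining atoms. The coherence of a finitely-supported measure can be checked via the characterization that there is a $\{0,1\}$-valued "event" variable $Z$ with $\mathbb E[Z|X]=X$ and $\mathbb E[Z|Y]=Y$; optimizing the free parameters of a well-chosen parametric family and letting $\alpha\to\infty$ should give $\limsup \ge 2/e$. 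Concretely I expect the optimal configuration to place an atom at a point $(a,1-a)$ or similar with $a=a(\alpha)\to 0$ at rate comparable to $1/\alpha$, so that $|X-Y|^\alpha=(1-2a)^\alpha\to e^{-2\alpha a}$ while the admissible mass behaves like $\mathrm{const}\cdot a$; optimizing $a\mapsto a\,e^{-c\alpha a}$ over $a$ gives the maximizer $a\sim 1/(c\alpha)$ and value $\sim 1/(ce\alpha)$, and matching constants with the upper bound fixes everything so the limit is exactly $2/e$.

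The main obstacle, as the above makes clear, is the upper bound: the Burdzy--Pal threshold estimate (\ref{Eq1}) is off by the crucial constant factor, so one genuinely needs the refined description of $\mathrm{ext}(\mathcal C)$ and its finitely-supported members. The heart of the argument will be to show that for a finitely-supported extreme coherent distribution, the mass it can place near the "corners" $(1,0)$ and $(0,1)$ — more precisely, the profile $\delta\mapsto \mathbb P(|X-Y|\ge \delta)$ for $\delta$ near $1$ — is constrained more tightly than (\ref{Eq1}) suggests, with the sharp local bound having slope producing $2/e$ after the substitution $\delta=1-t/\alpha$. I would isolate this as a lemma (an improved tail bound valid for $\delta$ close to $1$, or directly an estimate on $\mathbb E|X-Y|^\alpha$ for extreme points), prove it using the combinatorial structure of finite extreme points, and then the limit computation is the routine Laplace-type asymptotic sketched above. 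The remaining care is interchanging the supremum with the limit and verifying that finitely-supported extreme points suffice (a weak-* density / Choquet-type argument, using that the functional is continuous and $\mathcal C$ is compact convex).
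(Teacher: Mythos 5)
Your proposal is a plan rather than a proof, and the central step is missing. You correctly diagnose that the layer-cake bound via the Burdzy--Pal estimate (\ref{Eq1}) only yields $\limsup_\alpha \alpha\,\mathbb{E}|X-Y|^\alpha \le 2$, and you correctly conclude that the factor $1/e$ must come from a finer analysis of finitely-supported extreme points -- but you then defer exactly that analysis to an unproven ``lemma'' about an improved tail bound near $\delta=1$, with no indication of how to prove it or even of what its precise statement should be. This is where essentially all of the work lies: one needs (i) a workable characterisation of $\mathrm{ext}(\mathcal C)$ (in the paper, uniqueness and minimality of the representation $m=\mu+\nu$ with $(\mu,\nu)\in\mathcal R$, Theorem \ref{char}); (ii) the structural theorem that a finitely-supported extreme point is supported on an acyclic axial path with quotient function alternating between $0$ and $1$ (Theorem \ref{NO-cycle} and Corollary \ref{01-UL}); (iii) the resulting reduction of the supremum to the purely combinatorial optimisation \eqref{RED} over mass sequences along the path; and (iv) a genuinely delicate asymptotic analysis of that optimisation (isolating significant components, eliminating splits, reducing to a single-peak configuration, and estimating the three resulting contributions), which is where $2/e$ actually emerges. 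None of this is present or even sketched in your proposal, and it is not a routine ``Laplace-type asymptotic'': the improved tail behaviour you postulate is not proved pointwise for the profile $\delta\mapsto\mathbb P(|X-Y|\ge\delta)$ at all, but only after the global combinatorial reduction.

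The lower bound half is also not a proof as written. A coherent distribution cannot charge $(1,0)$ or $(0,1)$ (indeed \eqref{Eq1} gives $\mathbb P(|X-Y|\ge\delta)\to 0$ as $\delta\to1$), and your subsequent heuristic -- an atom near $(a,1-a)$ with $a\sim 1/\alpha$, optimising $a\,e^{-c\alpha a}$ and ``matching constants with the upper bound'' -- is circular, since the upper bound is the part you have not established, and you never exhibit a concrete measure and verify its coherence. The paper's construction does this explicitly: the sequence $(0,\tfrac1\alpha,\tfrac{\alpha-2}{\alpha},\tfrac1\alpha,0)$ in \eqref{RED} corresponds to a genuine three-atom coherent distribution and gives $\alpha\cdot\Phi_\alpha=2\bigl(1-\tfrac{1}{\alpha-1}\bigr)^\alpha\to 2/e$. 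Your Krein--Milman / weak-$*$ reduction to finitely-supported extreme points is sound and matches the paper's Propositions 4.1--4.3, but on its own it does not close the gap.
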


The proof of (\ref{asym})  that we present below rests on a novel, geometric-type approach. As verified in \cite{pitman}, the  family of coherent distributions 
 is a convex, compact subset of the space of probability distributions on $[0,1]^2$ equipped with the usual weak topology. One of the main results of this paper is to provide a characterisation of the extremal points of $\mathcal{C}$, which is considered to be one of the major challenges of the topic \cite{pitman,zhu}. 


%

It is instructive to take a look at the corresponding problem arising in the theory of martingales, the solution to which is well-known. Namely  (see \cite{extmart}), fix $N\in \mathbb{N}$ and consider the class of all finite martingales $(M_1,M_2,\dots, M_N)$ and the induced distributions on $\mathbb R^N$. The extremal distributions can be characterised as follows:
\begin{enumerate}[label=(\roman*)]
\item $M_1$ is concentrated in one point,
\item for any $n=2,\,3,\,\ldots,\,N$, the conditional distribution of $M_n$ given $(M_i)_{i=1}^{n-1}$ is concentrated on the set of cardinality at most two. 
\end{enumerate}

In particular, the support of a two-variate martingale  with an extremal distribution cannot exceed two points. Surprisingly, the structure of 
$\mathrm{ext}(\mathcal{C})$ (the set of extreme points of $\mathcal{C}$) is much more complex, as there exist extremal coherent measures with arbitrary large  or even countable infinite  number of atoms (see \cite{B1,zhu}). Conversely, as proved in \cite{B1}, elements of $\mathrm{ext}(\mathcal{C})$ are
always supported on sets of  Lebesgue measure zero. The existence of  non-atomic extreme points remains a yet another open problem.

 For the further discussion, we need to introduce some additional background and notation. For a  measure $\mu$ supported on $[0,1]^2$, we will write  $\mu^x$ and $\mu^y$ for the marginal measures of $\mu$ on $[0,1]$, i.e. for the measures obtained by projecting $\mu$ on the first and the second coordinate, correspondingly. 

\begin{defi}\label{R-set}
Introduce the family $\mathcal{R}$, which consists of all ordered pairs $(\mu, \nu)$ of nonnegative Borel measures on $[0,1]^2$ for which
$$\int_{A}(1-x) \ \mathrm{d}\mu^x  \  \ = \  \ \int_{A} x \ \mathrm{d}\nu^x,$$
and
$$\int_{B}(1-y)  \ \mathrm{d}\mu^y  \ \ =  \ \ \int_{B} y \ \mathrm{d}\nu^y,$$
for any Borel subsets $A,B \in \mathcal{B}([0,1])$. 
\end{defi}

It turns out that the  family $\mathcal{R}$ is very closely related to the class of coherent distributions. We will prove the following statement (a slightly different formulation can be found in \cite{B1}).

\begin{prop} \label{m=mu+nu} Let $m$ be a probability measure on $[0,1]^2$. Then $m$ is coherent if and only if there exists $(\mu, \nu)\in \mathcal{R}$ such that $m=\mu+\nu$. 
\end{prop}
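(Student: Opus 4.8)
The plan is to establish both directions by producing an explicit correspondence between, on one hand, a coherent vector $(X,Y)$ together with its conditioning data $(\mathcal{G},\mathcal{H},E)$, and on the other hand, a decomposition $m = \mu + \nu$ with $(\mu,\nu)\in\mathcal{R}$. The natural guess for the decomposition is to split the distribution of $(X,Y)$ according to whether the event $E$ occurs: set $\mu(\cdot) = \mathbb{P}((X,Y)\in\cdot\,,\ E^c)$ and $\nu(\cdot) = \mathbb{P}((X,Y)\in\cdot\,,\ E)$, so that $m = \mu+\nu$ is clear. The content is then to verify the two marginal identities in Definition~\ref{R-set}.

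\medskip

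\emph{Coherent $\Rightarrow$ decomposition.} With $\mu,\nu$ as above, for a Borel set $A\subseteq[0,1]$ I would compute
\[
\int_A x\,\mathrm{d}\nu^x \;=\; \mathbb{E}\big[X\,\mathbf{1}_{\{X\in A\}}\mathbf{1}_E\big] \;=\; \mathbb{E}\big[X\,\mathbf{1}_{\{X\in A\}}\,\mathbb{P}(E\mid\mathcal{G})\big] \;=\; \mathbb{E}\big[X\,\mathbf{1}_{\{X\in A\}}\,X\big],
\]
using that $X$ is $\mathcal{G}$-measurable and $X = \mathbb{P}(E\mid\mathcal{G})$; wait — more carefully, $\mathbf{1}_{\{X\in A\}}X$ is $\mathcal{G}$-measurable, so the tower property replaces $\mathbf{1}_E$ by $X$. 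Similarly $\int_A(1-x)\,\mathrm{d}\mu^x = \mathbb{E}[(1-X)\mathbf{1}_{\{X\in A\}}\mathbf{1}_{E^c}] = \mathbb{E}[(1-X)\mathbf{1}_{\{X\in A\}}(1-X)]$. So neither of these equals what I wrote; let me recompute: $\int_A x\,\mathrm{d}\nu^x = \mathbb{E}[X\mathbf{1}_{\{X\in A\}}\mathbf{1}_E]$ and conditioning on $\mathcal{G}$ gives $\mathbb{E}[X\mathbf{1}_{\{X\in A\}}X] = \mathbb{E}[X^2\mathbf{1}_{\{X\in A\}}]$, while $\int_A(1-x)\,\mathrm{d}\mu^x = \mathbb{E}[(1-X)\mathbf{1}_{\{X\in A\}}\mathbf{1}_{E^c}] = \mathbb{E}[(1-X)\mathbf{1}_{\{X\in A\}}(1-X)] = \mathbb{E}[(1-X)^2\mathbf{1}_{\{X\in A\}}]$ — these are not equal either, so the correct pairing must be $\int_A(1-x)\,\mathrm{d}\mu^x \overset{?}{=} \int_A x\,\mathrm{d}\nu^x$, i.e. $\mathbb{E}[(1-X)\mathbf{1}_{\{X\in A\}}\mathbf{1}_{E^c}]$ versus $\mathbb{E}[X\mathbf{1}_{\{X\in A\}}\mathbf{1}_E]$. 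Conditioning the first on $\mathcal{G}$: $\mathbb{E}[(1-X)\mathbf{1}_{\{X\in A\}}(1-X)]$; conditioning the second on $\mathcal{G}$: $\mathbb{E}[X\mathbf{1}_{\{X\in A\}}X]$. Still unequal. The resolution is that one should \emph{not} condition both factors: $\mathbb{E}[(1-X)\mathbf{1}_{\{X\in A\}}\mathbf{1}_{E^c}]$ and $\mathbb{E}[X\mathbf{1}_{\{X\in A\}}\mathbf{1}_E]$ — instead I observe $\mathbb{E}[\mathbf{1}_{\{X\in A\}}\mathbf{1}_{E^c}] = \mathbb{E}[\mathbf{1}_{\{X\in A\}}(1-X)]$ and $\mathbb{E}[\mathbf{1}_{\{X\in A\}}\mathbf{1}_E] = \mathbb{E}[\mathbf{1}_{\{X\in A\}}X]$, hence
\[
\int_A (1-x)\,\mathrm{d}\mu^x \;=\; \mathbb{E}\big[\mathbf{1}_{\{X\in A\}}(1-X)\big] - \int_A x\,\mathrm{d}\mu^x \;=\; \mathbb{E}[\mathbf{1}_{\{X\in A\}}\mathbf{1}_{E^c}] - \int_A x\,\mathrm{d}\mu^x,
\]
and since $\mu^x + \nu^x$ is the law of $X$, $\int_A x\,\mathrm{d}\mu^x + \int_A x\,\mathrm{d}\nu^x = \mathbb{E}[\mathbf{1}_{\{X\in A\}}X] = \mathbb{E}[\mathbf{1}_{\{X\in A\}}\mathbf{1}_E] = \nu^x(A) = \int_A \mathrm{d}\nu^x$; combining these linear relations yields $\int_A(1-x)\,\mathrm{d}\mu^x = \int_A x\,\mathrm{d}\nu^x$. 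The identity for the $y$-marginals follows by the identical argument with $\mathcal{H}$ in place of $\mathcal{G}$. So the key mechanism is: the $\mathcal{G}$-measurability of $X$ lets one replace $\mathbf{1}_E$ by $X$ inside expectations of $\mathcal{G}$-measurable integrands, and the marginal constraints are exactly what this forces.

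\medskip

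\emph{Decomposition $\Rightarrow$ coherent.} Given $(\mu,\nu)\in\mathcal{R}$ with $m=\mu+\nu$ a probability measure, I would build the probability space directly: let $\Omega = [0,1]^2\times\{0,1\}$, let the vector $(X,Y)$ be the coordinate projection, let $E = [0,1]^2\times\{1\}$, and define $\mathbb{P}$ on $[0,1]^2\times\{1\}$ to be $\nu$ and on $[0,1]^2\times\{0\}$ to be $\mu$ (total mass $1$ since $m$ is a probability measure). Take $\mathcal{G} = \sigma(X)$ and $\mathcal{H} = \sigma(Y)$. Then $\mathbb{P}(E\mid\mathcal{G})$ is a function of $X$, and on $\{X\in A\}$ its integral against $\mathbb{P}$ is $\nu^x(A) = \int_A x\,\mathrm{d}\nu^x$; the defining relation of $\mathcal{R}$ gives $\int_A x\,\mathrm{d}\nu^x = \int_A(1-x)\,\mathrm{d}\mu^x$, so $\int_A x\,\mathrm{d}(\mu^x+\nu^x) = \int_A x\,\mathrm{d}\nu^x + \int_A x\,\mathrm{d}\mu^x$, hmm, I want $\int_A x\,\mathrm{d}m^x = \nu^x(A)$: indeed $\int_A x\,\mathrm{d}\mu^x + \int_A x\,\mathrm{d}\nu^x$ and using $\int_A x\,\mathrm{d}\nu^x = \int_A(1-x)\,\mathrm{d}\mu^x$ gives $\int_A x\,\mathrm{d}\mu^x + \int_A(1-x)\,\mathrm{d}\mu^x = \mu^x(A)$, and then $\nu^x(A) = \int_A x\,\mathrm{d}\nu^x = \int_A(1-x)\,\mathrm{d}\mu^x = \mu^x(A) - \int_A x\,\mathrm{d}\mu^x$, so $\nu^x(A) + \int_A x\,\mathrm{d}\mu^x = \mu^x(A)$ — consistent — and therefore $\int_A x\,\mathrm{d}m^x = \int_A x\,\mathrm{d}\mu^x + \nu^x(A) = \mu^x(A) = $ (total mass in the strip) which is exactly $\mathbb{E}[\mathbf{1}_{\{X\in A\}}\mathbf{1}_E]$; hence $X = \mathbb{P}(E\mid\mathcal{G})$ a.s. The same computation with $\mathcal{H}$ gives $Y = \mathbb{P}(E\mid\mathcal{H})$ a.s., and $(X,Y)\sim m$ by construction, so $m\in\mathcal{C}$.

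\medskip

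\emph{Main obstacle.} There is no deep difficulty; the proof is essentially bookkeeping. The one place that requires care is getting the algebra of the marginal constraints exactly right — as the false starts above illustrate, it is easy to pair the wrong factors — and making sure the disintegration / conditional-expectation argument is stated with the correct measurability (the integrand $\mathbf{1}_{\{X\in A\}}$ is $\mathcal{G}$-measurable, which is what licenses replacing $\mathbf{1}_E$ by $X = \mathbb{P}(E\mid\mathcal{G})$). A secondary point is to confirm that the two marginal identities, which only constrain $\mu^x,\nu^x$ and $\mu^y,\nu^y$ separately, are genuinely sufficient — and they are, precisely because in the construction $\Rightarrow$ direction one is free to take $\mathcal{G}=\sigma(X)$, $\mathcal{H}=\sigma(Y)$, so only the one-dimensional marginals of $\mu$ and $\nu$ enter the verification.
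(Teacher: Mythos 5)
Your overall strategy is sound and is in substance the same as the paper's: the paper first replaces the data $(E,\mathcal G,\mathcal H)$ by a single variable $Z\in[0,1]$ with $X=\mathbb E(Z|X)$, $Y=\mathbb E(Z|Y)$ (Proposition \ref{def2}), then takes $\mu,\nu$ to be the $Z$- and $(1-Z)$-weighted parts of $m$ for the forward direction and $Z=\mathrm d\mu/\mathrm dm$ for the converse; your splitting along $E$ versus $E^c$ and your two-sheet space $[0,1]^2\times\{0,1\}$ are exactly the special case $Z=\mathbf 1_E$ of this. However, there is a consistent orientation error that makes your final identities false as written. In Definition \ref{R-set} the first component $\mu$ must be the part of $m$ carried by the event: if $\mu(\cdot)=\mathbb P((X,Y)\in\cdot\,,E)$, then $\mu^x(A)=\mathbb E[\mathbf 1_{\{X\in A\}}\mathbf 1_E]=\mathbb E[\mathbf 1_{\{X\in A\}}X]=\int_A x\,\mathrm d\mu^x+\int_A x\,\mathrm d\nu^x$, which rearranges to $\int_A(1-x)\,\mathrm d\mu^x=\int_A x\,\mathrm d\nu^x$. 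You instead put $\mu$ on $E^c$ and $\nu$ on $E$, and the two linear relations you derive then give $\int_A(1-x)\,\mathrm d\nu^x=\int_A x\,\mathrm d\mu^x$, i.e.\ $(\nu,\mu)\in\mathcal R$ --- not the identity $\int_A(1-x)\,\mathrm d\mu^x=\int_A x\,\mathrm d\nu^x$ asserted in your last line, which does not follow (it would force $\mu^x(A)=\nu^x(A)$ for all $A$). Concretely, take $\mathcal G$ trivial and $\mathbb P(E)=1/3$, so $X\equiv 1/3$: then your $\mu^x=\tfrac23\delta_{1/3}$, $\nu^x=\tfrac13\delta_{1/3}$, and $\int(1-x)\,\mathrm d\mu^x=\tfrac49\neq\tfrac19=\int x\,\mathrm d\nu^x$, while the swapped identity holds ($\tfrac29=\tfrac29$).

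The same swap infects the converse: you place $\nu$ on the sheet $E=[0,1]^2\times\{1\}$, but the computation you carry out correctly shows $\int_A x\,\mathrm dm^x=\mu^x(A)$ for $(\mu,\nu)\in\mathcal R$, and in your construction $\mu^x(A)$ is the mass of the strip $A\times[0,1]$ lying on the sheet $E^c$; so what you actually verify is $X=\mathbb P(E^c\mid\mathcal G)$. (There is also a slip ``$\nu^x(A)=\int_A x\,\mathrm d\nu^x$'' in that passage, although the chain of equalities still lands on the correct quantity $\mu^x(A)$.) Both directions are repaired by one global change --- take $\mu$ to be the restriction of the law to $E$ and $\nu$ the restriction to $E^c$, equivalently exchange the two sheets --- after which your argument is complete and coincides with the paper's up to the choice of working with $\mathbf 1_E$ rather than a general $Z$.
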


The above result motivates the following.

\begin{defi}
For a fixed $m\in \mathcal{C}$, consider the class
$$\mathcal{R}(m) \ = \ \{(\mu, \nu) \in \mathcal{R} : \ m=\mu+\nu\}.$$
Any element $(\mu, \nu)\in \mathcal{R}(m)$ will be called a \emph{representation} of a coherent distribution $m$.
 \end{defi}

By the very definition, both $\mathcal{C}$ and $\mathcal{R}$, and hence also $\mathcal{R}(m)$, are convex sets. To proceed, let us distinguish the ordering in the class of measures, which will often be used in our considerations below. Namely, for two Borel measures $\mu_1, \mu_2$ supported on the unit square, we will write $\mu_1\leq \mu_2$ if we have $\mu_1(A)\le \mu_2(A)$ for all $A\in \mathcal{B}([0,1]^2)$.

\begin{defi} \label{Un-Mi} Let $m\in \mathcal{C}$.  We say that the representation $(\mu, \nu)$ of $m$ is
\smallskip

$\cdot$ \emph{unique}, if for every $(\tilde{\mu}, \tilde{\nu})\in \mathcal{R}$ with $m=\tilde{\mu}+\tilde{\nu}$, we have $\tilde{\mu}=\mu$ and $\tilde{\nu}=\nu$;

\smallskip

$\cdot$ \emph{minimal}, if for all $(\tilde{\mu}, \tilde{\nu})\in \mathcal{R}$ with $\tilde{\mu}\le \mu$ and $\tilde{\nu}\le \nu$, there exists $\alpha \in [0,1]$ such that $(\tilde{\mu}, \tilde{\nu}) = \alpha \cdot (\mu, \nu)$.
\end{defi}

With these notions at hand, we will give the following general characterisation of $\textrm{ext}(\mathcal{C})$. 

\begin{thm} \label{char} Let $m$ be a coherent distribution on $[0,1]^2$. Then $m$ is extremal if and only if the representation of $m$ is unique and minimal.\end{thm}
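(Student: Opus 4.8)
The plan is to prove both implications by exploiting the linear relationship between $\mathcal{C}$ and $\mathcal{R}$ furnished by Proposition \ref{m=mu+nu}. Throughout, write the "doubling" map $T:\mathcal{R}\to\mathcal{C}$, $T(\mu,\nu)=\mu+\nu$; by the proposition $T$ is onto, and it is clearly affine. The key structural observation I would isolate first is a two-sided relation between convex decompositions of $m$ inside $\mathcal{C}$ and pairs inside $\mathcal{R}$: if $(\mu,\nu)\in\mathcal{R}(m)$ and $\tilde\mu\le\mu$, $\tilde\nu\le\nu$ with $(\tilde\mu,\tilde\nu)\in\mathcal{R}$ and $\tilde\mu+\tilde\nu\ne 0$, then writing $t=(\tilde\mu+\tilde\nu)([0,1]^2)\in(0,1]$ we get $m=t\cdot\frac{\tilde\mu+\tilde\nu}{t}+(1-t)\cdot\frac{(\mu-\tilde\mu)+(\nu-\tilde\nu)}{1-t}$, a convex combination of two elements of $\mathcal{C}$ (both summands lie in $\mathcal{C}$ again by Proposition \ref{m=mu+nu}, since $(\tilde\mu,\tilde\nu)$ and $(\mu-\tilde\mu,\nu-\tilde\nu)$ both lie in $\mathcal{R}$ — the defining identities are preserved under nonnegative differences here because the relevant moment integrals subtract). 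This is the bridge that lets "sub-representations" produce, and be produced by, convex splittings of $m$.

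For the direction "$m$ extremal $\Rightarrow$ representation unique and minimal": suppose $(\mu,\nu),(\tilde\mu,\tilde\nu)\in\mathcal{R}(m)$. Then $\frac12(\mu,\nu)+\frac12(\tilde\mu,\tilde\nu)\in\mathcal{R}(m)$ as well, and one checks that $\mu_0:=\mu\wedge\tilde\mu$ together with a suitable companion measure lies in $\mathcal{R}$ and is $\le$ both — more cleanly, I would argue directly: if $\tilde\mu\ne\mu$ then $\mu':=\min(\mu,\tilde\mu)$ pointwise (via Hahn/Jordan decomposition of $\mu-\tilde\mu$) yields $(\mu',\nu')\in\mathcal{R}$ with $(\mu',\nu')\le(\mu,\nu)$ and $\le(\tilde\mu,\tilde\nu)$, strictly smaller than at least one of them, and $(\mu',\nu')$ is not a scalar multiple of $(\mu,\nu)$; the bridge above then exhibits $m$ as a nontrivial convex combination of two distinct elements of $\mathcal{C}$, contradicting extremality. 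Hence uniqueness. Minimality follows from the bridge directly: any $(\tilde\mu,\tilde\nu)\in\mathcal{R}$ with $(\tilde\mu,\tilde\nu)\le(\mu,\nu)$ that is not a scalar multiple of $(\mu,\nu)$ would, again via the decomposition $m=t\,\sigma_1+(1-t)\,\sigma_2$ with $\sigma_1\ne\sigma_2$, contradict extremality of $m$.

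For the converse "unique and minimal $\Rightarrow$ extremal": suppose $m=t\,m_1+(1-t)\,m_2$ with $m_1,m_2\in\mathcal{C}$, $t\in(0,1)$; we must show $m_1=m_2=m$. Pick representations $(\mu_1,\nu_1)\in\mathcal{R}(m_1)$ and $(\mu_2,\nu_2)\in\mathcal{R}(m_2)$. Then $(\mu,\nu):=t(\mu_1,\nu_1)+(1-t)(\mu_2,\nu_2)\in\mathcal{R}(m)$, and by \emph{uniqueness} this is \emph{the} representation of $m$. Now $t(\mu_1,\nu_1)\le(\mu,\nu)$ and lies in $\mathcal{R}$, so \emph{minimality} forces $t(\mu_1,\nu_1)=\alpha(\mu,\nu)$ for some $\alpha\in[0,1]$; comparing total masses gives $\alpha=t$, hence $(\mu_1,\nu_1)=(\mu,\nu)$ and so $m_1=\mu_1+\nu_1=\mu+\nu=m$; symmetrically $m_2=m$. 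Therefore $m\in\mathrm{ext}(\mathcal{C})$.

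The main obstacle I anticipate is the careful measure-theoretic verification that taking minima/differences of the measures in a pair keeps us inside $\mathcal{R}$ — i.e. that if $(\mu,\nu)\in\mathcal{R}$ and $0\le\tilde\mu\le\mu$ then one can find $\tilde\nu$ with $0\le\tilde\nu\le\nu$ and $(\tilde\mu,\tilde\nu)\in\mathcal{R}$, or more precisely the construction of $(\mu',\nu')$ in the uniqueness argument. The defining equations of $\mathcal{R}$ couple the \emph{marginals} of $\mu$ and $\nu$, not the joint measures, so one has to be attentive: a pointwise minimum on $[0,1]^2$ need not respect the marginal identities. The right move is likely to perform the comparison at the level of the marginal identities (disintegrate, or work with the Jordan decomposition of $\mu-\tilde\mu$ and of $\nu-\tilde\nu$ simultaneously), and this is the step I would expect to require the most technical care; the convexity/extremality bookkeeping around it is routine.
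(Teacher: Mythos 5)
Your ``$\Leftarrow$'' direction and your minimality argument in the ``$\Rightarrow$'' direction coincide with the paper's proof and are correct: convex-combining representations of $m_1,m_2$, invoking uniqueness and then minimality, and comparing total masses is exactly what the paper does; likewise the ``bridge'' $m=t\,\sigma_1+(1-t)\,\sigma_2$ built from a sub-pair $(\tilde\mu,\tilde\nu)\le(\mu,\nu)$ is the paper's identity \eqref{eq4-T1.4}, and the linearity of the defining marginal identities does give $(\mu-\tilde\mu,\nu-\tilde\nu)\in\mathcal{R}$.

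The genuine gap is in the \emph{uniqueness} half of ``$\Rightarrow$''. Your plan is to take two representations $(\mu,\nu)\ne(\tilde\mu,\tilde\nu)$ of $m$, form $\mu'=\mu\wedge\tilde\mu$, supply ``a suitable companion measure'' $\nu'$ with $(\mu',\nu')\in\mathcal{R}$, $(\mu',\nu')\le(\mu,\nu)$ and $\le(\tilde\mu,\tilde\nu)$, and feed this into the bridge. Two steps are missing and neither is routine. First, the companion $\nu'$ is never constructed: the conditions defining $\mathcal{R}$ force $d(\nu')^x=\frac{1-x}{x}\,d(\mu')^x$ and $d(\nu')^y=\frac{1-y}{y}\,d(\mu')^y$, which prescribes both marginals of $\nu'$ but not $\nu'$ itself, and the natural candidate $\nu\wedge\tilde\nu$ has no reason to have those marginals; you flag this yourself as the main obstacle, which is an admission that the step is open. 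Second, even granting $(\mu',\nu')$, the bridge only contradicts extremality if $t^{-1}(\mu'+\nu')\ne m$; the property you establish (``not a scalar multiple of $(\mu,\nu)$'') does not rule out $\mu'+\nu'=t\,m$, and you cannot exclude that case by appealing to uniqueness since uniqueness is what you are proving. The paper avoids all of this with a swap argument: from the identities in Definition~\ref{R-set}, \emph{every} representation $\mu_i$ of $m$ has the same marginals, namely $\mu_i^x(A)=\int_A x\,dm^x$ and $\mu_i^y(B)=\int_B y\,dm^y$; hence $m-\mu_1+\mu_2$ and $m-\mu_2+\mu_1$ are probability measures, each is coherent (with representations $(\mu_2,m-\mu_1)$ and $(\mu_1,m-\mu_2)$ in $\mathcal{R}$), they are distinct when $\mu_1\ne\mu_2$, and their average is $m$ --- contradicting extremality directly. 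You should replace your minorant construction by this swap.
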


This statement will be established in the next section. Then, in Section 3, we concentrate on extremal coherent  measures with finite support. Let $\textrm{ext}_f(\mathcal{C}) = \{\eta \in \textrm{ext}(\mathcal{C}) :  |\mathrm{supp}(\eta)|<\infty\}.$ Theorem \ref{char} will enable us to deduce several structural properties of $\textrm{ext}_f(\mathcal{C})$; most importantly,  as conjectured in \cite{zhu}, we show that support of  $\eta \in \textrm{ext}_f(\mathcal{C})$ cannot contain any axial cycles. Here is the definition.
 
\begin{defi} \label{ax-cycle} The sequence  $\big((x_i,y_i)\big)_{i=1}^{2n}$ with values in $[0,1]^2$ is called an \emph{axial cycle}, if all points
$(x_i, y_i)$ are distinct, the endpoint coordinates $x_1$ and $x_{2n}$ coincide, and we have
\begin{center} $x_{2i} = x_{2i+1}$ \ \ \ and  \ \ \ $y_{2i-1} = y_{2i}$  \ \ \ for all  \ $i$. \end{center}
\end{defi}

Remarkably, the same `no axial cycle' property holds true for extremal doubly stochastic measures (permutons) -- for the relevant discussion, see \cite{EDSM}. Next, in Section 4, we
apply our previous results and obtain the following reduction towards Theorem \ref{2/e}. Namely, for all $\alpha \ge1$, we have
\begin{equation} \label{RED} \sup_{(X,Y)\in \mathcal{C}}\mathbb{E}|X-Y|^{\alpha}  \ \ =  \ \ \sup_{\tilde{\textbf{z}}}\ \sum_{i=1}^{n} z_i  \Big| \frac{z_i}{z_{i-1}+z_i}-\frac{z_i}{z_i+z_{i+1}} \Big|^{\alpha}.\end{equation}
Here the supremum is taken over all $n$ and all sequences \ $\tilde{\textbf{z}} = (z_0, z_1, \dots, z_{n+1})$ such that $  z_0=z_{n+1}=0$, $z_i> 0$  for all $i=1,\,2,\,\ldots,\,n$,  and $\sum_{i=1}^{n}z_i=1.$ 
Finally, using several combinatorial arguments and reductions, we prove Theorem \ref{2/e} by a direct analysis of  the right-hand side of (\ref{RED}).

\section{Coherent measures, Representations}

Let $\mathcal{M}([0,1]^2)$ and $\mathcal{M}([0,1])$ denote  the space of nonnegative Borel measures on $[0,1]^2$ and $[0,1]$, respectively.
For $\mu \in  \mathcal{M}([0,1]^2)$, let $\mu^x, \mu^y \in \mathcal{M}([0,1])$ be defined by

\begin{center}$\mu^x(A) = \mu(A\times [0,1])$ \ \ \ \ and  \ \ \ \  $\mu^y(B)=\mu([0,1]\times B)$,\end{center}
 for all Borel subsets $A,B\in \mathcal{B}([0,1])$. We begin with the following characterisation of $\mathcal{C}$.

\begin{prop} \label{def2} Let $m \in \mathcal{M}([0,1]^2)$. The measure $m$ is a coherent distribution if and only if  it is the joint distribution of a two-variate random vector $(X,Y)$ such that
$$X=\mathbb{E}(Z|X) \ \ \ \ \text{and} \ \ \ \ Y=\mathbb{E}(Z|Y) \ \ \ \  \text{almost surely}$$
for some random variable $Z$ with $0\le Z\le 1$. \end{prop}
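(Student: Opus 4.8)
The plan is to prove Proposition \ref{def2} by establishing the two implications separately, with the main work being the construction that turns the abstract conditional-probability definition into the ``projected'' form with $Z=\mathbb{E}(Z|X)$, $Z=\mathbb{E}(Z|Y)$.

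\textbf{The easy direction.} Suppose $m$ is the joint law of $(X,Y)$ with $X=\mathbb{E}(Z|X)$ and $Y=\mathbb{E}(Z|Y)$ almost surely for some $Z\in[0,1]$. I would enlarge the probability space to carry an independent uniform random variable $U$ on $[0,1]$, and set $E=\{U\le Z\}$. Then $\mathbb{P}(E\mid Z)=Z$, so for the $\sigma$-field $\mathcal{G}=\sigma(X)$ we get $\mathbb{P}(E\mid\mathcal{G})=\mathbb{E}(\mathbb{P}(E\mid\sigma(X,Z))\mid\mathcal{G})=\mathbb{E}(Z\mid X)=X$ almost surely, and likewise $\mathbb{P}(E\mid\mathcal{H})=Y$ with $\mathcal{H}=\sigma(Y)$. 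Hence $m\in\mathcal{C}$. (One has to be a little careful that $\sigma(X,Z)\supseteq\sigma(X)$ and that $E$ depends only on $(U,Z)$, but this is routine.)

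\textbf{The hard direction.} Conversely, let $m\in\mathcal{C}$, realised by $X=\mathbb{P}(E\mid\mathcal{G})$, $Y=\mathbb{P}(E\mid\mathcal{H})$ on some $(\Omega,\mathcal{F},\mathbb{P})$. Put $Z=\mathbbm{1}_E$, so $0\le Z\le 1$ and $\mathbb{E}(Z\mid\mathcal{G})=X$, $\mathbb{E}(Z\mid\mathcal{H})=Y$. The issue is that we are asked for $\mathbb{E}(Z\mid X)=X$, not merely $\mathbb{E}(Z\mid\mathcal{G})=X$, and similarly for $Y$; also the joint law of $(X,Y)$ must be preserved. I would invoke the tower property: since $X$ is $\mathcal{G}$-measurable, $\mathbb{E}(Z\mid X)=\mathbb{E}(\mathbb{E}(Z\mid\mathcal{G})\mid X)=\mathbb{E}(X\mid X)=X$ almost surely, and symmetrically $\mathbb{E}(Z\mid Y)=Y$. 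Thus the \emph{same} random vector $(X,Y,Z)$ already works — no new construction is needed — and the joint distribution of $(X,Y)$ is unchanged by fiat. So in fact this direction collapses to a one-line application of the tower property once one sets $Z=\mathbbm{1}_E$ (or, if one prefers a bounded $[0,1]$-valued variable that is not $0/1$, one may first replace $\mathbbm{1}_E$ by $\mathbb{E}(\mathbbm{1}_E\mid\sigma(X)\vee\sigma(Y))$, which still satisfies both identities by the tower property and still lies in $[0,1]$).

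\textbf{Main obstacle.} The only genuinely delicate point is in the easy direction: one must produce an \emph{event} $E$ whose conditional probability given $\sigma(X)$ equals $X$, starting from a merely $[0,1]$-valued $Z$ with $\mathbb{E}(Z\mid X)=X$. The randomisation trick with an auxiliary independent uniform $U$ handles this, but it requires enlarging the probability space; I would remark explicitly that this is harmless because the definition of coherence quantifies over \emph{all} probability spaces. Everything else is bookkeeping with the tower property and the definition of $\mu^x,\mu^y$.
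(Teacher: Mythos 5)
Your proof is correct, and in fact it supplies details that the paper itself omits: the authors dispose of Proposition~\ref{def2} with ``This is straightforward'' and a citation to \cite{pitman, mastersthesis}. Your two ingredients --- the tower property with $Z=\mathbbm{1}_E$ (using that $X$ is $\mathcal{G}$-measurable, so $\sigma(X)\subseteq\mathcal{G}$) for one implication, and the randomisation $E=\{U\le Z\}$ with an independent uniform $U$ on an enlarged space for the other --- are exactly the standard argument in the cited references, and both steps are carried out correctly. The only cosmetic oddity is that the direction you label ``hard'' is the one-line tower-property implication, while the genuinely delicate construction sits in the direction you label ``easy''; the mathematics is unaffected.
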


\begin{proof} This is straightforward. See \cite{pitman, mastersthesis}. \end{proof}

Recall the definition of the class $\mathcal{R}$ formulated in the previous section. Let us study the connection between this class and the family of all coherent distributions.

%
%
%

\begin{proof}[Proof of Proposition \ref{m=mu+nu}]  First,  we show that the decomposition $m=\mu+\nu$ exists for all $m\in \mathcal{C}$. Indeed, by virtue of Proposition \ref{def2}, we can find a
random vector $(X,Y)\sim m$ defined on some probability space $(\Omega, \mathcal{F}, \mathbb{P})$, such that $X=\mathbb{E}(Z|X)$ and $Y=\mathbb{E}(Z|Y)$ for some random variable  $Z\in[0,1]$. For a set $C\in \mathcal{B}([0,1]^2)$, we put 
\begin{equation} \label{mu-nu} \mu(C)=\int_{\{(X,Y)\in C\}}Z \ \mathrm{d}\mathbb{P} \ \ \ \ \ \text{and} \ \ \ \ \ \nu(C)=\int_{\{(X,Y)\in C\}}(1-Z) \ \mathrm{d}\mathbb{P}. \end{equation}
Then the equality $m=\mu+\nu$ is evident. Furthermore, for a fixed $A\in \mathcal{B}([0,1])$, we have
\begin{equation} \label{wwo1} \int_{\{X\in A\}}X \ \mathrm{d}\mathbb{P} \ = \ \int_{\{X\in A\}}Z \ \mathrm{d}\mathbb{P} \ = \ \int_{A}1 \ \mathrm{d}\mu^x,\end{equation}
where the first equality is due to $X=\mathbb{E}(Z|X)$ and the second is a consequence of (\ref{mu-nu}).  Moreover, we may also write
\begin{equation} \label{wwo2} \int_{\{X\in A\}} X \ \mathrm{d}\mathbb{P} \ = \ \int_{A\times[0,1]}x \ \mathrm{d}m \ = \
  \int_{A}x \ \mathrm{d}\mu^x  +  \int_{A}x \ \mathrm{d}\nu^x.\end{equation}
Combining (\ref{wwo1}) and (\ref{wwo2}), we get
$$\ \int_{A}(1-x) \ \mathrm{d}\mu^x   \ =   \ \int_{A} x \ \mathrm{d}\nu^x ,$$
for all $A\in \mathcal{B}([0,1])$.  The symmetric condition (the second requirement in Definition \ref{R-set}) is shown analogously. This completes the first part of the proof.

Now, pick a probability measure $m$ on $[0,1]^2$  such that $m=\mu+\nu$ for some $(\mu, \nu) \in \mathcal{R}$. We need to show that $m$ is coherent. To this end, consider the probability space $([0,1]^2, \mathcal{B}([0,1]^2),m)$ and the random variables $X,Y : [0,1]^2 \rightarrow [0,1]$ defined by 
$$X(x,y)=x \ \ \ \ \text{and} \ \ \ \ Y(x,y)=y, \ \ \ \   x,\,y\in [0,1].$$
Additionally, let $Z$ denote the Radon--Nikodym derivative of $\mu$ with respect to $m$: we have $0\le Z \le 1$ $m$--almost surely and
$\mu(C)   =   \int_{C}Z  \mathrm{d}m$ for all $C\in \mathcal{B}([0,1]^2)$. Again by Proposition \ref{def2}, it is sufficient to verify
that $X=\mathbb{E}(Z|X)$ and $Y=\mathbb{E}(Z|Y)$. By symmetry, it is enough to show the first equality.
Fix $A\in \mathcal{B}([0,1])$ and note that
\begin{equation} \label{wwo3} \int_{\{X\in A\}}X \ \mathrm{d}m  \ = \ \int_{A\times[0,1]} x \ \mathrm{d}m \ = \  
\int_{A}x \ \mathrm{d}\mu^x  +  \int_{A}x \ \mathrm{d}\nu^x. \end{equation}
Similarly, we also have
\begin{equation} \label{wwo4}   \int_{\{X\in A\}}Z \ \mathrm{d}m  \ = \ \int_{A\times[0,1]} Z \ \mathrm{d}m \ = \  \mu(A\times[0,1]) \ = \
\int_{A}1 \ \mathrm{d}\mu^x. \end{equation}
Finally,  note that by $(\mu, \nu) \in \mathcal{R}$, the right-hand sides of (\ref{wwo3}) and (\ref{wwo2}) are equal. Therefore we obtain the identity
$$\int_{\{X\in A\}}X \  \mathrm{d}m  \ = \  \int_{\{X\in A\}}Z  \ \mathrm{d}m$$
for arbitrary $A\in \mathcal{B}([0,1])$. This yields  the claim. \end{proof}

We turn our attention to the characterisation of $\mathrm{ext}(\mathcal{C})$ stated in the previous section.

\begin{proof}[Proof of Theorem \ref{char}, the implication `$\Rightarrow$'] Let $m$ be an  extremal coherent measure and suppose, on contrary, that  $(\mu_1, \nu_1)$ and $(\mu_2, \nu_2)$ are two different elements of $\mathcal{R}(m)$. We will prove that $m-\mu_1+\mu_2$ and $m-\mu_2+\mu_1$ are also coherent  distributions. Because of
$$m \ = \ \frac{1}{2}(m-\mu_1+\mu_2) \ +  \ \frac{1}{2}(m-\mu_2+\mu_1),$$
we will obtain the contradiction with the assumed extremality of $m$. By symmetry, it is enough to show that  $(m-\mu_1+\mu_2) \in \mathcal{C}$. 
To this end, by virtue of Proposition \ref{m=mu+nu}, it suffices  to check that $m-\mu_1+\mu_2$ is a probability measure and $(\mu_2, m-\mu_1) \in \mathcal{R}$. First, note that $\nu_1=m-\mu_1$ is nonnegative and fix an arbitrary $A\in \mathcal{B}([0,1])$. As $(\mu_1, \nu_1)$ and $(\mu_2, \nu_2)$ are representations of $m$, Definition  \ref{R-set} gives
$$ \int_{A} 1 \ \mathrm{d}\mu_1^x  \ = \ \int_{A} x \ (\mathrm{d}\nu_1^x+\mathrm{d}\mu_1^x) \ = \  
 \int_{A} x \ \mathrm{d}m^x, $$
 and
\begin{equation} \label{eq1-T1.4} \int_{A} 1 \ \mathrm{d}\mu_2^x  \ = \ \int_{A} x \ (\mathrm{d}\nu_2^x+\mathrm{d}\mu_2^x) \ = \  
 \int_{A} x \ \mathrm{d}m^x, \end{equation}
 so $\mu_1^x(A)=\mu_2^x(A)$. Similarly, we can deduce that $\mu_1^y=\mu_2^y$, which means that marginal distributions of $\mu_1$ and $\mu_2$ are  equal. This, together with  $m-\mu_1\ge 0$, proves that $m-\mu_1+\mu_2$ is a probability measure. Next, using (\ref{eq1-T1.4}) and $\mu_1^x=\mu_2^x$, we can also write
 \begin{equation} \label{eq2-T1.4}  \int_{A} (1-x) \ \mathrm{d}\mu_2^x \ = \ \int_{A} x \ \mathrm{d}m^x - \int_{A} x \ \mathrm{d}\mu_1^x \ = \  \int_{A} x \ \mathrm{d}(m-\mu_1)^x. \end{equation}
In the same way we get
 \begin{equation} \label{eq3-T1.4}  \int_{B} (1-y) \ \mathrm{d}\mu_2^y \ = \  \int_{B} y \ \mathrm{d}(m-\mu_1)^y, \end{equation}
 for all $B\in \mathcal{B}([0,1])$. By (\ref{eq2-T1.4}) and (\ref{eq3-T1.4}), we obtain that $(\mu_2, m-\mu_1) \in \mathcal{R}$ and this completes the proof of the uniqueness.
 
To show the minimality, let $m$ be an extremal coherent measure with the representation $(\mu, \nu)$ (which is unique, as we have just proved).  Consider any nonzero $(\tilde{\mu}, \tilde{\nu})\in \mathcal{R}$ with $\tilde{\mu}\le \mu$ and $\tilde{\nu}\le \nu$.  Then, by the very definition of $\mathcal{R}$,  we have
 $(\mu-\tilde{\mu}, \nu-\tilde{\nu})\in \mathcal{R}$. Therefore, by  Proposition \ref{m=mu+nu}, we get
 $$\alpha^{-1}(\tilde{\mu}+\tilde{\nu}), \ (1-\alpha)^{-1}(m-\tilde{\mu}-\tilde{\nu}) \ \in  \mathcal{C},$$
 where $\alpha=(\tilde{\mu}+\tilde{\nu})([0,1]^2) \in (0,1]$. We have the identity
 \begin{equation} \label{eq4-T1.4} m \ = \ \alpha\cdot \Big( \alpha^{-1}(\tilde{\mu}+\tilde{\nu}) \Big)  +  (1-\alpha)\cdot \Big((1-\alpha)^{-1}(m-\tilde{\mu}-\tilde{\nu}) \Big),\end{equation} 
which combined with the extremality of $m$ yields $m=\alpha^{-1}(\tilde{\mu}+\tilde{\nu})=\alpha^{-1}\tilde{\mu}+\alpha^{-1}\tilde{\nu}$. But $(\alpha^{-1}\tilde{\mu},\alpha^{-1}\tilde{\nu})$ belongs to $\mathcal{R}$, since $(\tilde{\mu},\tilde{\nu})$ does, and hence $(\alpha^{-1}\tilde{\mu},\alpha^{-1}\tilde{\nu})$ is a representation of $m$. By the uniqueness,  we deduce that $(\tilde{\mu}, \tilde{\nu}) = \alpha\cdot(\mu,\nu)$.
\end{proof}

\begin{proof}[Proof of Theorem \ref{char}, the implication `$\Leftarrow$'] Let $m$ be a coherent distribution with the uni\-que and minimal representation $(\mu, \nu)$. To show that  $m$ is extremal, consider the decomposition $m=\beta \cdot m_1+(1-\beta)\cdot m_2$ for some $m_1, m_2 \in \mathcal{C}$ and $\beta \in (0,1)$.
 Moreover, let $(\mu_1, \nu_1)\in \mathcal{R}(m_1)$ and $(\mu_2, \nu_2) \in \mathcal{R}(m_2)$. By the convexity of $\mathcal{R}$, we have
 \begin{equation} \label{eq5-T1.4} (\mu', \nu'):= \ (\beta\mu_1 + (1-\beta)\mu_2,  \ \beta\nu_1 +(1-\beta)\nu_2) \ \in \mathcal{R}(m) \end{equation}
 and hence, by the uniqueness, we get $(\mu', \nu')=(\mu, \nu)$. Then, directly by (\ref{eq5-T1.4}), we have
 \begin{equation} \label{eq6-T1.4} \beta \mu_1\le \mu  \ \ \ \textrm{and} \ \ \ \beta   \nu_1 \le \nu. \end{equation}
Combining this with the minimality of $(\mu, \nu)$, we get $(\beta  \mu_1,\beta \nu_1)=\alpha (\mu,\nu)$  for some $\alpha\in [0,1]$. Since $m=\mu+\nu$ and $m_1=\mu_1+\nu_1$ are probability measures, this gives $\alpha=\beta$ and hence  $(\mu_1, \nu_1)=(\mu, \nu)$. This implies $m=m_1$ and completes the proof.
\end{proof}

\section{Extreme points with finite support}

  In this section we study the geometric structure of the supports of measures belonging  to $\textrm{ext}_f(\mathcal{C}) = \{\eta \in \textrm{ext}(\mathcal{C}) :  |\mathrm{supp}(\eta)|<\infty\}$. Our key result is presented in Theorem \ref{NO-cycle} -- we prove that the support of an extremal coherent distribution cannot contain any axial cycles (see Definition \ref{ax-cycle}).  Let us emphasize that this property has been originally conjectured in \cite{zhu}. We start with a simple combinatorial observation: it is straightforward to check that certain special `alternating' cycles are forbidden.

\begin{defi} \label{ALT-cylce-DEF} Let $\eta$ be a coherent distribution with a unique representation $(\mu, \nu)$ and let $\big((x_i,y_i)\big)_{i=1}^{2n}$  be 
an axial cycle contained in  $\mathrm{supp}(\eta)$. Then $\big((x_i,y_i)\big)_{i=1}^{2n}$ is an alternating cycle if 
$$(x_{2i+1}, y_{2i+1})\in \mathrm{supp}(\mu)  \ \ \ \  \text{and}  \ \ \ \ (x_{2i}, y_{2i})\in \mathrm{supp}(\nu),$$
for all $i=1,2,\dots, n$ (with the convention $x_{2n+1}=x_1,\,y_{2n+1}=y_1$).    \end{defi}

\begin{figure} [H]
\centering
\includegraphics[width=60mm]{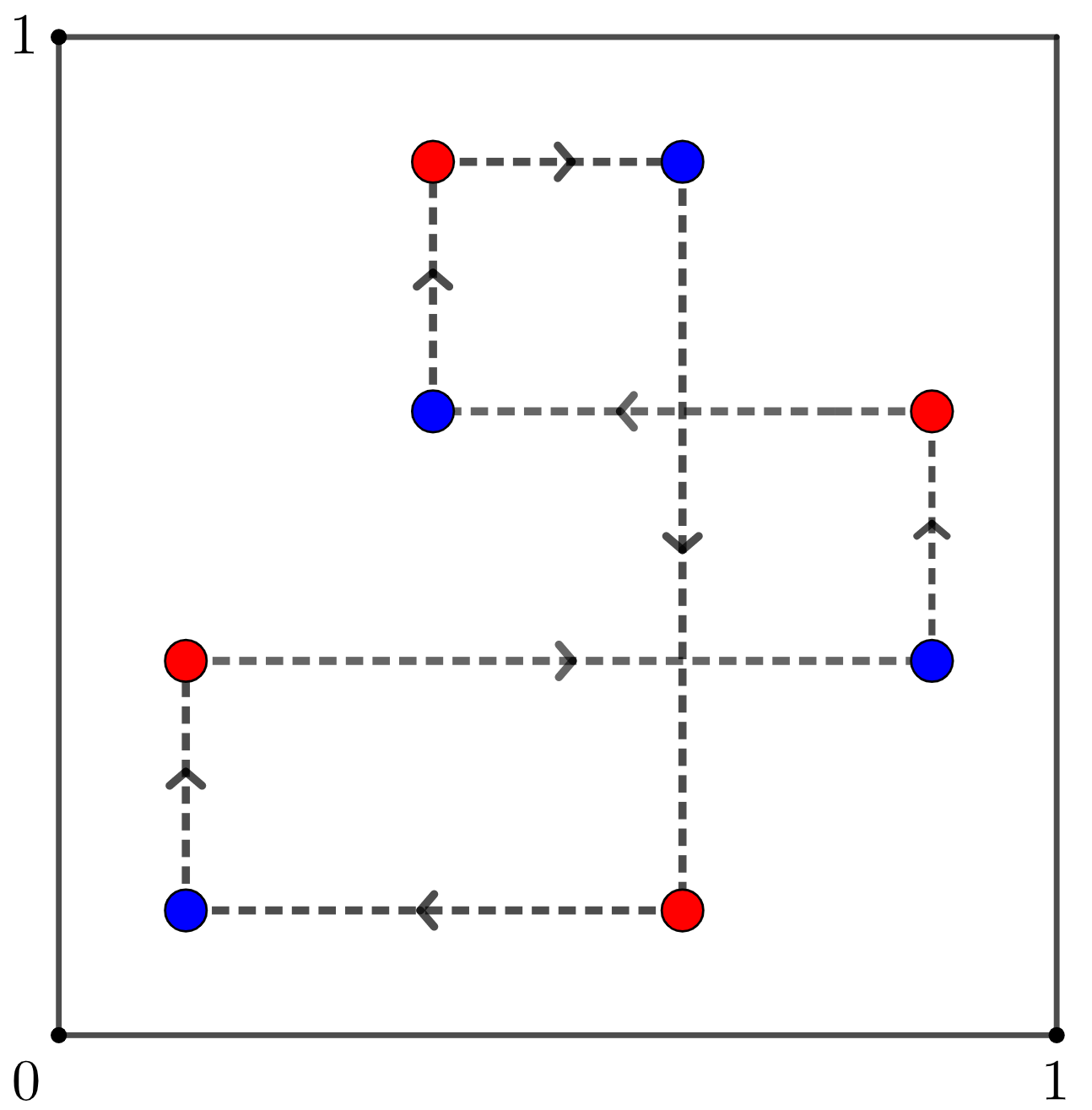}
\vspace{0.2cm}
\caption{An example of an alternating cycle. Red points represent probability masses in $\mathrm{supp}(\mu)$, while blue points indicate probability masses  in $\mathrm{supp}(\nu)$. Arrows outline a possible transformation of  the representation $(\mu, \nu)$.  }
\label{rys1} 
\end{figure}

 \begin{prop}  \label{NO-alt-cycle} If $\eta \in \mathrm{ext}_f(\mathcal{C})$, then $\mathrm{supp}(\eta)$ does not contain any alternating cycles.  \end{prop}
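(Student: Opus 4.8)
The plan is to exploit the hypothesis that $\eta$ is extremal via Theorem~\ref{char}: its representation $(\mu,\nu)$ is unique and minimal. Suppose, for contradiction, that $\big((x_i,y_i)\big)_{i=1}^{2n}$ is an alternating cycle contained in $\mathrm{supp}(\eta)$, so that the odd-indexed points carry $\mu$-mass and the even-indexed points carry $\nu$-mass. The idea is to perturb $(\mu,\nu)$ along the cycle: shift a small amount of mass $\varepsilon$ off each odd point from $\mu$ and simultaneously add $\varepsilon$ of $\nu$-mass there, while doing the reverse at each even point, i.e. replace $(\mu,\nu)$ by $(\mu',\nu')$ where $\mu' = \mu - \varepsilon\sum_i \delta_{(x_{2i+1},y_{2i+1})} + \varepsilon\sum_i \delta_{(x_{2i},y_{2i})}$ and $\nu' = \nu + \varepsilon\sum_i \delta_{(x_{2i+1},y_{2i+1})} - \varepsilon\sum_i \delta_{(x_{2i},y_{2i})}$. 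Note $\mu'+\nu' = \mu+\nu = \eta$, so this is mass-preserving on $\eta$, and for $\varepsilon$ small enough (smaller than the minimum atom mass of the relevant atoms in $\mu$ and $\nu$) both $\mu'$ and $\nu'$ remain nonnegative.

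The crucial point is to check that $(\mu',\nu')\in\mathcal{R}$, i.e. that the two marginal balance conditions of Definition~\ref{R-set} are preserved. This is exactly where the axial-cycle structure enters: the condition $x_{2i}=x_{2i+1}$ and $y_{2i-1}=y_{2i}$ (with the endpoint identification $x_1 = x_{2n}$) means that the $x$-coordinate where we \emph{remove} $\mu$-mass from the odd point $(x_{2i+1},y_{2i+1})$ is the same as the $x$-coordinate where we \emph{add} $\mu$-mass at the even point $(x_{2i},y_{2i})$ — and similarly the $y$-coordinates match up in the consecutive pairing given by $y_{2i-1}=y_{2i}$. Hence along each fixed vertical line $\{X = c\}$ the net change in $\mu^x$ is zero, and likewise along each horizontal line the net change in $\mu^y$ is zero; the perturbations of $\nu^x,\nu^y$ are the negatives of these, so the identities $\int_A (1-x)\,\mathrm{d}\mu^x = \int_A x\,\mathrm{d}\nu^x$ and its $y$-analogue are untouched. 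This gives $(\mu',\nu')\in\mathcal{R}(\eta)$, and for $\varepsilon>0$ it is distinct from $(\mu,\nu)$ (the $\mu$-mass at an odd point strictly decreases), contradicting the uniqueness of the representation. The same argument with $-\varepsilon$ shows $(\mu - \varepsilon(\cdots), \nu + \varepsilon(\cdots))$ with signs flipped is also in $\mathcal{R}(\eta)$, so one even gets a nontrivial interval of representations; uniqueness is violated either way.

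The main obstacle is the bookkeeping in the marginal-cancellation step: one has to be careful that the $2n$ points are genuinely organized into the right adjacency pattern — vertical segments pairing $(x_{2i},y_{2i})$ with $(x_{2i+1},y_{2i+1})$ via a common $x$-coordinate, horizontal segments pairing $(x_{2i-1},y_{2i-1})$ with $(x_{2i},y_{2i})$ via a common $y$-coordinate, and the cycle closing up because $x_{2n}=x_1$ — and that the distinctness of the $2n$ points guarantees no accidental overlap that would spoil the "$\varepsilon$ small enough" choice or make the perturbation trivial. Once this combinatorial picture is set up cleanly (essentially Figure~\ref{rys1}), the verification of $(\mu',\nu')\in\mathcal{R}$ is a direct computation, and the contradiction with Theorem~\ref{char} is immediate. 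I would also remark that minimality is not even needed here — uniqueness alone suffices to exclude alternating cycles, which is consistent with this being stated as a preliminary step toward the general no-axial-cycle theorem.
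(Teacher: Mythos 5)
Your proposal is correct and follows essentially the same route as the paper: transfer a small mass $\delta$ between $\mu$ and $\nu$ along the alternating cycle, observe that the axial structure keeps the marginals of $\mu$ and $\nu$ (hence membership in $\mathcal{R}$) and the sum $\mu'+\nu'=\eta$ unchanged, and contradict the uniqueness of the representation guaranteed by Theorem~\ref{char}. Your closing remark that only uniqueness (not minimality) is needed also matches the paper's argument.
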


\begin{proof} Let $\eta$ be a coherent distribution with a unique representation $(\mu, \nu)$ and a finite support. Additionally, assume that $\big((x_i,y_i)\big)_{i=1}^{2n}$ is an alternating cycle contained in $\mathrm{supp}(\eta)$. Let $\delta$ be the smaller of the two numbers
$$\min_{0\le i \le n-1}\mu(x_{2i+1}, y_{2i+1})  \ \ \ \ \  \text{and}  \ \ \ \ \ \min_{1\le i \le n}\nu(x_{2i}, y_{2i})$$
(for  brevity, in what follows we will skip the parentheses and write $\mu(a,b)$, $\nu(a,b)$ instead of $\mu(\{a,b\})$, $\nu(\{a,b\})$, respectively). 
By Definition \ref{ALT-cylce-DEF}, we have $\delta>0$.  Now, consider the transformation $(\mu, \nu) \mapsto (\mu', \nu')$ described by the following requirements:

 \begin{enumerate}[label=\arabic*.]
 
 \item for $i=0,1,\dots, n-1$, put
 $$\mu'(x_{2i+1}, y_{2i+1}):= \ \mu(x_{2i+1}, y_{2i+1})-\delta$$
 $$\nu'(x_{2i+1}, y_{2i+1}):= \ \nu(x_{2i+1}, y_{2i+1})+\delta,$$
 
 \item for $i=1,2,\dots, n$, put
 $$\mu'(x_{2i}, y_{2i}):= \ \mu(x_{2i}, y_{2i})+\delta$$
 $$\nu'(x_{2i}, y_{2i}):= \ \nu(x_{2i}, y_{2i})-\delta,$$
  \end{enumerate} 
  
 \begin{enumerate}[label=\arabic*.]
 \setcounter{enumi}{2}
 \item for $(x,y)\not \in \{ (x_i,y_i): 1\le i \le 2n\}$, set
 $$\mu'(x,y)=\mu(x,y),$$
 $$\nu'(x,y)=\nu(x,y).$$
 \end{enumerate} 
Note that $\mu$ and $\mu'$, as well as $\nu$ and $\nu'$, have the same marginal distributions and hence $(\mu', \nu')\in \mathcal{R}$.
We also have $\mu'+\nu'=\mu+\nu=\eta$ and thus $(\mu', \nu')\in \mathcal{R}(\eta)$. This  contradicts the uniqueness of the representation $(\mu, \nu)$ and shows that $\mathrm{supp}(\eta)$ cannot  contain an alternating cycle. By Theorem \ref{char}, this ends the proof.
\end{proof}

 Before the further combinatorial analysis, we need to introduce some useful auxiliary notation.  For $\mu, \nu \in \mathcal{M}([0,1]^2)$ with $|\mathrm{supp}(\mu+\nu)|<\infty$, we define a quotient function $q_{(\mu, \nu)}: \mathrm{supp}(\mu+\nu) \rightarrow [0,1]$ by  
$$q_{(\mu, \nu)}(x,y)= \frac{\mu(x,y)}{\mu(x,y)+\nu(x,y)}.$$
In what follows, we will omit the
subscripts and write $q$ for $q_{(\mu, \nu)}$ whenever the choice for $(\mu, \nu)$ is clear from the context.

\begin{prop}\label{3.1+3.2} Let $\mu, \nu \in \mathcal{M}([0,1]^2)$ and  $|\mathrm{supp}(\mu+\nu)|<\infty$. Then $(\mu, \nu) \in \mathcal{R}$ if and only if 
the following conditions hold simultaneously:

\begin{itemize}[label=\raisebox{0.25ex}{\tiny$\bullet$}]
\item for every $x$ satisfying $\mu(\{x\}\times[0,1])+\nu(\{x\}\times [0,1])>0$, we have
\begin{equation}\label{SUMx} \sum_{\substack{y\in [0,1], \\ (x,y)\in \mathrm{supp}(\mu+\nu)}} q(x,y) \frac{\mu(x,y)+\nu(x,y)}{\mu(\{x\}\times[0,1])+\nu(\{x\}\times [0,1])} \ = \ x,\end{equation}
\item for every $y$ satisfying $\mu([0,1]\times \{y\})+\nu([0,1]\times \{y\})>0$, we have
\begin{equation}\label{SUMy} \sum_{\substack{x\in [0,1], \\(x,y)\in \mathrm{supp}(\mu+\nu)} }  q(x,y) \frac{\mu(x,y)+\nu(x,y)}{\mu([0,1]\times \{y\})+\nu([0,1]\times \{y\})}  \ = \ y,\end{equation}
\end{itemize}
where  sums in (\ref{SUMx}) and (\ref{SUMy}) are well defined -- in both cases, there is only a finite number of nonzero summands.
\end{prop}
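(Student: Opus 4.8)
The plan is to translate the condition $(\mu,\nu)\in\mathcal{R}$ from Definition \ref{R-set} into these explicit weighted-barycenter identities by testing the two integral equalities against singletons. First I would observe that, since $\mathrm{supp}(\mu+\nu)$ is finite, the marginal measures $\mu^x,\nu^x$ (and similarly $\mu^y,\nu^y$) are supported on a finite set of abscissas, so all the measures in play are purely atomic and every integral in Definition \ref{R-set} becomes a finite sum. For the forward implication, given $(\mu,\nu)\in\mathcal{R}$, I would fix an $x_0$ with $(\mu^x+\nu^x)(\{x_0\})>0$ and apply the first identity in Definition \ref{R-set} with $A=\{x_0\}$. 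This gives
\begin{equation*}
(1-x_0)\,\mu^x(\{x_0\}) \ = \ x_0\,\nu^x(\{x_0\}),
\end{equation*}
which rearranges to $\mu^x(\{x_0\}) = x_0\bigl(\mu^x(\{x_0\})+\nu^x(\{x_0\})\bigr)$, i.e. $x_0 = \mu^x(\{x_0\})/(\mu^x+\nu^x)(\{x_0\})$. Then I would expand $\mu^x(\{x_0\}) = \sum_{y:(x_0,y)\in\mathrm{supp}(\mu+\nu)}\mu(x_0,y) = \sum_y q(x_0,y)\bigl(\mu(x_0,y)+\nu(x_0,y)\bigr)$ and divide by $(\mu^x+\nu^x)(\{x_0\}) = \mu(\{x_0\}\times[0,1])+\nu(\{x_0\}\times[0,1])$ to land exactly on (\ref{SUMx}). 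The identity (\ref{SUMy}) follows in the same way from the second requirement of Definition \ref{R-set} with $B=\{y_0\}$.

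For the converse, I would argue that (\ref{SUMx}) holding for every relevant $x$ is equivalent to the singleton-version of the first identity in Definition \ref{R-set}, and that singleton sets generate enough of $\mathcal{B}([0,1])$ for the atomic measures at hand: any Borel set $A$ meets $\mathrm{supp}(\mu^x+\nu^x)$ in a finite (hence Borel) subset, and both sides of $\int_A(1-x)\,\mathrm{d}\mu^x = \int_A x\,\mathrm{d}\nu^x$ are countably additive measures in $A$ that are concentrated on that finite support, so equality on singletons propagates to all $A$. Reversing the algebra above — multiplying (\ref{SUMx}) through by $(\mu^x+\nu^x)(\{x\})$ and recognizing $\sum_y q(x,y)(\mu(x,y)+\nu(x,y)) = \mu^x(\{x\})$ — recovers $(1-x)\mu^x(\{x\}) = x\,\nu^x(\{x\})$ for each atom $x$, and summing over $x\in A$ gives the first defining relation of $\mathcal{R}$; the $y$-relation is symmetric. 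One should also note the edge case where $(\mu^x+\nu^x)(\{x\})=0$: then $x$ contributes nothing to either side, so the restriction of the quantifier in (\ref{SUMx}) to atoms of positive mass loses no information.

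I expect no serious obstacle here; the statement is essentially a bookkeeping reformulation, and the only point requiring a little care is the measure-theoretic step in the converse — justifying that checking the Definition \ref{R-set} identities on all Borel sets reduces to checking them on singletons. This is immediate once one notes that, because $|\mathrm{supp}(\mu+\nu)|<\infty$, the marginals $\mu^x,\nu^x,\mu^y,\nu^y$ are finite atomic measures, so the ``well-defined, finitely many nonzero summands'' clause in the statement is automatic and the two sides of each integral identity are finite atomic measures in the test set. I would also make explicit the elementary but load-bearing identity $\mu(\{x\}\times[0,1]) = \sum_{y:(x,y)\in\mathrm{supp}(\mu+\nu)}\mu(x,y)$ (and likewise for $\nu$ and for the $y$-slices), since it is what lets one pass freely between the marginal masses appearing in Definition \ref{R-set} and the per-point quantities $\mu(x,y),\nu(x,y),q(x,y)$ appearing in (\ref{SUMx}) and (\ref{SUMy}).
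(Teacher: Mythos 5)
Your argument is correct and is exactly the routine verification the paper has in mind — its proof of this proposition is the single line ``Due to $|\mathrm{supp}(\mu+\nu)|<\infty$, this is a simple consequence of Definition \ref{R-set}.'' Testing the defining identities on singletons, using $\mu(x,y)=q(x,y)(\mu(x,y)+\nu(x,y))$ to rewrite the slice masses, and noting that for finite atomic marginals equality on singletons is equivalent to equality on all Borel sets, is precisely the bookkeeping being elided.
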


\begin{proof} Due to $|\mathrm{supp}(\mu+\nu)|<\infty$, this is a simple consequence of Definition \ref{R-set}.   \end{proof}

Next, we will require an additional distinction between three different types of points.
 
 \begin{defi}   Let $(\mu, \nu)\in \mathcal{R}$. A point $(x,y)\in \mathrm{supp}(\mu+\nu)$ is said to be 
 \begin{enumerate} [label=\alph*)]
 \item a lower out point, if \  $q(x,y)< \min(x,y)$;
  \item an upper out point, if  \ $q(x,y)> \max(x,y)$;
  \item a cut point, if it is not an out point, i.e.
  $$x\le q(x,y) \le y \ \ \ \ \ \text{or} \ \ \ \ \ y\le q(x,y) \le x.$$
 \end{enumerate}
 \end{defi}
 
 Finally, for the sake of completeness, we include a formal definition of an axial path. 
 
\begin{defi}  The sequence $\big((x_i,y_i)\big)_{i=1}^{n}$ with terms in $[0,1]^2$ is called an axial path if
 \begin{enumerate} [label=$\cdot$]
 \item all points $(x_i, y_i)$ are distinct;
  \item we have $x_{i+1} = x_{i}$ or $y_{i+1} = y_{i}$ for all $i$;
  \item there are at most two points on any horizontal or vertical line.
 \end{enumerate}
 \end{defi}

 To develop some intuition, it is convenient to inspect the example given below.

\begin{figure}[H]
\centering
\includegraphics[width=63mm]{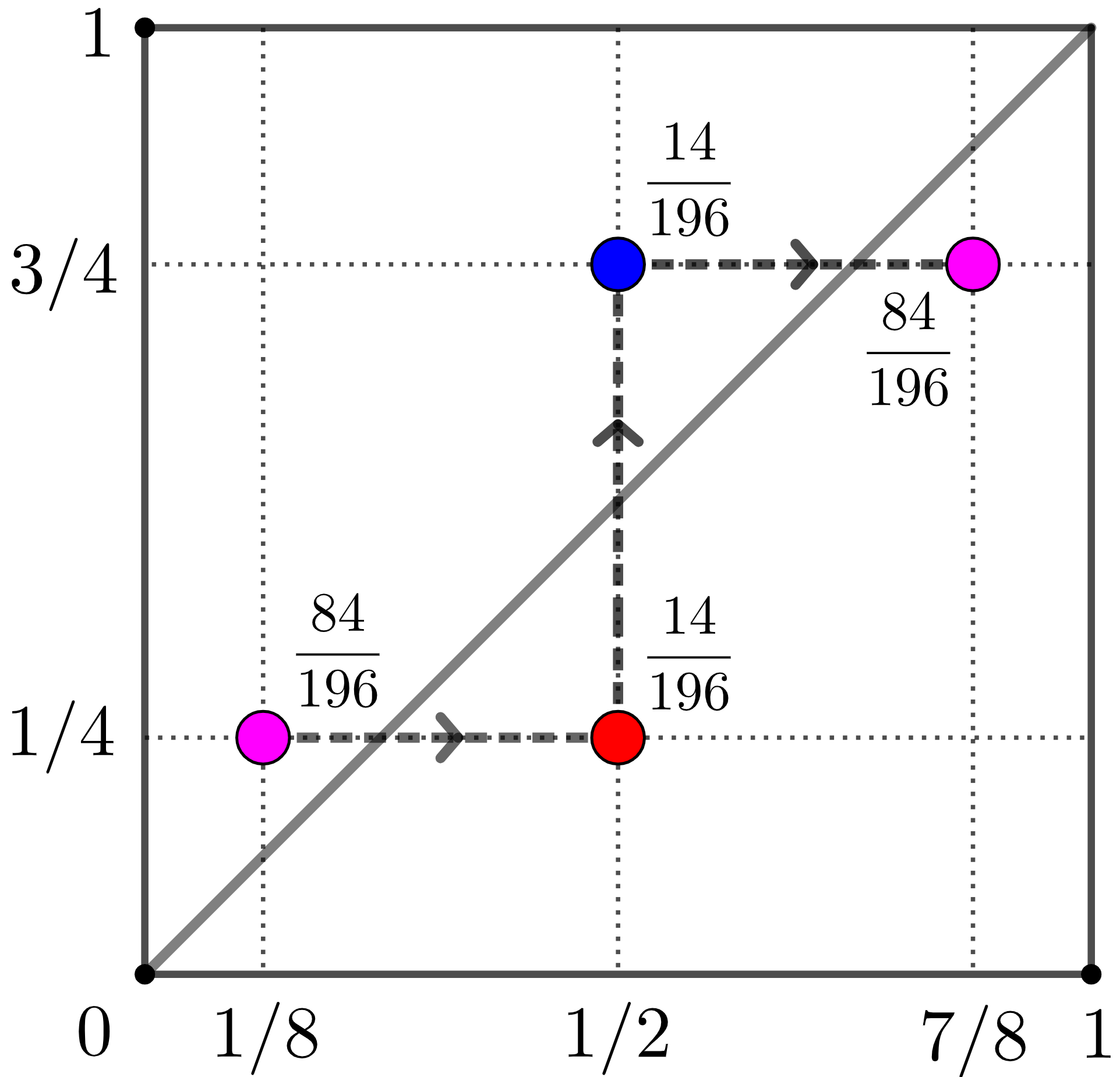}
\vspace{0.15cm}
\caption{ Support of a coherent distribution $m$. Purple points (endpoints of the path) are cut points.
Red point represents a mass in $\mathrm{supp}(\mu)$ and is an upper out point. Blue point indicates a mass  in $\mathrm{supp}(\nu)$ and it is a lower out point.}
\label{rys3} 
\end{figure}
  
\begin{exe}\label{exe1} 
Let $m$ be a probability measure given by
 $$m\Big(\frac{1}{8}, \frac{1}{4}\Big)=\frac{84}{196}, \ \ \ m\Big(\frac{1}{2}, \frac{1}{4}\Big)=\frac{14}{196}, \ \ \  m\Big(\frac{1}{2}, \frac{3}{4}\Big)=\frac{14}{196}, \ \ \  m\Big(\frac{7}{8}, \frac{3}{4}\Big)=\frac{84}{196}.$$
There are five observations, which will be discussed separately.

\smallskip

(i) Consider the decomposition $m=\mu+\nu$, where $(\mu, \nu)$  is determined by the quotient function
$$q\Big(\frac{1}{8}, \frac{1}{4}\Big)=\frac{1}{8}, \ \ \ q\Big(\frac{1}{2}, \frac{1}{4}\Big)=1, \ \ \  q\Big(\frac{1}{2}, \frac{3}{4}\Big)=0, \ \ \  q\Big(\frac{7}{8}, \frac{3}{4}\Big)=\frac{7}{8}.$$
Using Proposition \ref{3.1+3.2}, we can check that $(\mu, \nu)\in \mathcal{R}$. For instance, for $y=\frac{1}{4}$ we get
\begin{equation} \label{3.2-exe} \frac{q(\frac{1}{8}, \frac{1}{4})\cdot m(\frac{1}{8}, \frac{1}{4}) + q(\frac{1}{2}, \frac{1}{4})\cdot m(\frac{1}{2}, \frac{1}{4})}{m(\frac{1}{8}, \frac{1}{4})+m(\frac{1}{2}, \frac{1}{4})} \ = \ \frac{\frac{1}{8}\cdot \frac{84}{196}+1\cdot \frac{14}{196}}{\frac{84}{196}+\frac{14}{196}} \ = \ \frac{1}{4},\end{equation}
which agrees with (\ref{SUMy}). As a direct consequence, by Proposition \ref{m=mu+nu}, we have $m\in \mathcal{C}$. 

\smallskip

(ii) Observe that $(\frac{1}{8}, \frac{1}{4})$ and $(\frac{7}{8}, \frac{3}{4})$ are cut points, $(\frac{1}{2}, \frac{1}{4})$ is an upper out point and $(\frac{1}{2}, \frac{3}{4})$ is a lower out point.  Moreover, $\mathrm{supp}(m)$ is an axial path without cycles -- see Figure \ref{rys3}.

\smallskip

(iii) Notably, $(\mu, \nu)$ is a unique representation of $m$. Indeed, $(\frac{1}{8}, \frac{1}{4})$ is the only point in $\mathrm{supp}(m)$ with $x$-coordinate equal to $\frac{1}{8}$ and hence $q(\frac{1}{8}, \frac{1}{4})=\frac{1}{8}$.  Accordingly, $q(\frac{1}{2}, \frac{1}{4})=1$ is now a consequence of (\ref{3.2-exe}). The derivation of $q(\frac{1}{2}, \frac{3}{4})=0$ and $q(\frac{7}{8}, \frac{3}{4})=\frac{7}{8}$ follows from an analogous computation.

\smallskip

(iv) Finally, the representation $(\mu, \nu)$ is minimal; let $(\tilde{\mu}, \tilde{\nu})\in \mathcal{R}$ satisfy $\tilde{\mu}\le \mu$ and $\tilde{\nu}\le \nu$.
Suppose that $(\frac{1}{8}, \frac{1}{4})\in \mathrm{supp}(\tilde{\mu}+\tilde{\nu})$.  Again, as   $(\frac{1}{8}, \frac{1}{4})$ is the only point in $\mathrm{supp}(m)$ with $x$-coordinate equal to $\frac{1}{8}$, we get $q_{(\tilde{\mu},\tilde{\nu})}(\frac{1}{8}, \frac{1}{4})=\frac{1}{8}$. Next, assume that $(\frac{1}{2}, \frac{1}{4})\in \mathrm{supp}(\tilde{\mu}+\tilde{\nu})$. As 
$\tilde{\nu} (\frac{1}{2}, \frac{1}{4}) \le \nu(\frac{1}{2}, \frac{1}{4})=0$, we have $q_{(\tilde{\mu},\tilde{\nu})}(\frac{1}{2}, \frac{1}{4})=1$.
Likewise, we can check that 
\begin{equation} \label{EQ-exe2} q_{(\tilde{\mu},\tilde{\nu})}(x,y) \ = \ q_{(\mu, \nu)}(x,y) \ \ \ \ \textrm{for all } (x,y)\in \mathrm{supp}(\tilde{\mu}+\tilde{\nu}).\end{equation}
By Proposition \ref{3.1+3.2} and the equation (\ref{EQ-exe2}), we easily obtain that $\tilde{\mu} +\tilde{\nu}=0$ or $\mathrm{supp}(\tilde{\mu}+\tilde{\nu})=\mathrm{supp}(m)$. For example,

\smallskip

$\cdot$  if $(\frac{1}{2}, \frac{1}{4})\in \mathrm{supp}(\tilde{\mu}+\tilde{\nu})$, then (\ref{SUMx}) gives $(\frac{1}{2}, \frac{3}{4}) \in \mathrm{supp}(\tilde{\mu}+\tilde{\nu})$;
 
 \smallskip
 
$\cdot$ if $(\frac{1}{2}, \frac{3}{4})\in \mathrm{supp}(\tilde{\mu}+\tilde{\nu})$, then (\ref{SUMy}) yields $(\frac{7}{8}, \frac{3}{4}) \in \mathrm{supp}(\tilde{\mu}+\tilde{\nu})$.
 
 \smallskip
 
 \noindent Therefore, if $\tilde{\mu}+\tilde{\nu}\not=0$, then the measure $\tilde{\mu}+\tilde{\nu}$ is supported on the same set as $m$ and $q_{(\tilde{\mu},\tilde{\nu})} \equiv  q_{(\mu, \nu)}$. For the same reason, i.e. using Proposition \ref{3.1+3.2} and path structure of $\mathrm{supp}(m)$, it follows that $\tilde{\mu}+ \tilde{\nu} = \alpha \cdot m$ for some $\alpha \in [0,1]$. For instance, by (\ref{SUMy}) for $y=\frac{1}{4}$, we get
 $$\frac{\frac{1}{8}\cdot \tilde{m}(\frac{1}{8}, \frac{1}{4}) + 1\cdot \tilde{m}(\frac{1}{2}, \frac{1}{4})}{\tilde{m}(\frac{1}{8}, \frac{1}{4})+\tilde{m}(\frac{1}{2}, \frac{1}{4})} \ = \ \frac{1}{4},$$
where $\tilde{m}= \tilde{\mu}+\tilde{\nu}.$ Hence $\tilde{m}(\frac{1}{8}, \frac{1}{4})\tilde{m}(\frac{1}{2}, \frac{1}{4})^{-1}=m(\frac{1}{8}, \frac{1}{4})m(\frac{1}{2}, \frac{1}{4})^{-1}=\frac{84}{14}$.

\smallskip

(v) By the above analysis and Theorem \ref{char}, we conclude that $m\in \mathrm{ext}_f(\mathcal{C})$.
\end{exe}
  
 We are now ready to demonstrate the central result of this section. 
 
 \begin{thm} \label{NO-cycle}  If $\eta \in \mathrm{ext}_f(\mathcal{C})$, then $\mathrm{supp}(\eta)$ is an axial path without cycles. \end{thm}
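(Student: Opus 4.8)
The plan is to translate the statement into graph language and then play off the two properties guaranteed by Theorem~\ref{char}: uniqueness and minimality of the representation. To $\eta\in\mathrm{ext}_f(\mathcal C)$ with unique minimal representation $(\mu,\nu)$ attach the bipartite graph $G$ whose vertices are the first coordinates and, separately, the second coordinates occurring in $\mathrm{supp}(\eta)$, with one edge joining $x$ and $y$ for each point $(x,y)\in\mathrm{supp}(\eta)$; since no two distinct points share both coordinates, $G$ is a simple graph, and a $2m$-cycle in $G$ is exactly an axial cycle of length $2m$ in $\mathrm{supp}(\eta)$. Unravelling Definitions~\ref{ax-cycle} and of an axial path, the claim ``$\mathrm{supp}(\eta)$ is an axial path without cycles'' is precisely the assertion that $G$ is a simple path, which is the conjunction of: (i) every vertex of $G$ has degree at most $2$ (at most two points of $\mathrm{supp}(\eta)$ on any horizontal or vertical line); (ii) $G$ is connected; (iii) $G$ is acyclic. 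The running tools are: $q:=q_{(\mu,\nu)}$, for which $\mu(e)=q(e)\,\eta(e)$; and the reformulation of Proposition~\ref{3.1+3.2}, namely that a decomposition $\eta=\mu'+\nu'$ satisfies $(\mu',\nu')\in\mathcal R$ if and only if for every vertex $v$ of $G$ the total $\mu'$-mass on the edges incident to $v$ equals a fixed number $b_v$ (equal to $x\,\eta^x(\{x\})$ if $v$ is the column $x$, and to $y\,\eta^y(\{y\})$ if $v$ is the row $y$).

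First I would settle (ii). If $G$ is disconnected, split $\mathrm{supp}(\eta)$ along the components and let $\eta_1,\eta_2$ be the restrictions of $\eta$ to one component and to the union of the remaining ones; both are nonzero. Since the constraints of Proposition~\ref{3.1+3.2} only couple points sharing a column or a row, the restrictions $(\mu_1,\nu_1)$ and $(\mu_2,\nu_2)$ of $(\mu,\nu)$ to the two pieces again lie in $\mathcal R$; normalising and using Proposition~\ref{m=mu+nu}, $\eta_i/\eta_i([0,1]^2)\in\mathcal C$, and $\eta$ is a nontrivial convex combination of these two distinct coherent measures, contradicting extremality. Hence $G$ is connected.

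Next, acyclicity (iii). Assume $G$ contains a cycle $C=(f_1,\dots,f_{2m})$. For $\delta\in\mathbb R$ perturb the representation by adding $(-1)^i\delta$ to $\mu$ on $f_i$ and subtracting it from $\nu$, leaving everything off $C$ untouched. Each vertex lying on $C$ is incident to two consecutive cycle-edges, of opposite sign, so every $b_v$ is preserved and the perturbed pair lies in $\mathcal R(\eta)$ by the reformulation above; for small $\delta\ne 0$ it differs from $(\mu,\nu)$ and contradicts uniqueness --- unless the perturbation leaves the admissible box, i.e. unless $C$ contains points with $q\in\{0,1\}$ positioned so that neither sign of $\delta$ keeps $0\le\mu(f_i)\le\eta(f_i)$ along all of $C$. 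Ruling out this blocked configuration is the crux of the argument; note that it is genuinely not covered by Proposition~\ref{NO-alt-cycle}, since that proposition is exactly the unblocked instance of the above perturbation for an alternating cycle. The idea is that the obstructing points --- those in $\mathrm{supp}(\mu)\setminus\mathrm{supp}(\nu)$, where $q=1$, and those in $\mathrm{supp}(\nu)\setminus\mathrm{supp}(\mu)$, where $q=0$ --- split $C$ into arcs with $q\in(0,1)$ in the interior; one then shows either that such an arc together with its two endpoint vertices carries a nonzero pair $(\tilde\mu,\tilde\nu)\in\mathcal R$ with $\tilde\mu\le\mu$, $\tilde\nu\le\nu$ and support strictly inside $\mathrm{supp}(\eta)$ (contradicting minimality), or that a shorter cycle can be extracted, or that, after a good choice of labelling, $C$ is in fact an alternating cycle, whence Proposition~\ref{NO-alt-cycle} applies. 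I expect the bookkeeping needed to cover every parity arrangement of the obstructing points to be the main technical obstacle.

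Finally (i). By (ii)--(iii) we may assume $G$ is a tree; if it is not a path, some vertex, say a column $x_0$, has degree $k\ge 3$, with edges to rows $y_1,\dots,y_k$, and deleting $x_0$ splits $G$ into subtrees $T_1,\dots,T_k$ with $y_j\in T_j$. On a tree the vertex-sum constraints already pin down $\mu$, so uniqueness gives nothing; instead I would invoke minimality. Choose $\alpha_1,\alpha_2\in(0,1]$, set $\tilde m:=\alpha_1\,\eta|_{T_1\cup\{(x_0,y_1)\}}+\alpha_2\,\eta|_{T_2\cup\{(x_0,y_2)\}}$ (restriction meaning restriction to the relevant points), and put $\tilde\mu:=q\cdot\tilde m$, $\tilde\nu:=(1-q)\cdot\tilde m$, so that $0\le\tilde\mu\le\mu$ and $0\le\tilde\nu\le\nu$. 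Since every edge of $G$ at any vertex of $T_1\setminus\{x_0\}$ lies inside $T_1$, and likewise for $T_2$, scaling shows all vertex-sum constraints inside $T_1$ and $T_2$ (including at $y_1$ and $y_2$) hold automatically; the only remaining one, at $x_0$, reads $q(x_0,y_1)\alpha_1\eta(x_0,y_1)+q(x_0,y_2)\alpha_2\eta(x_0,y_2)=x_0\big(\alpha_1\eta(x_0,y_1)+\alpha_2\eta(x_0,y_2)\big)$, and because the original balance at $x_0$ forces the numbers $q(x_0,y_j)$ either to all equal $x_0$ or to straddle it, this linear equation has a solution with $\alpha_1,\alpha_2>0$ (after relabelling $y_1,y_2$ if needed). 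Then $(\tilde\mu,\tilde\nu)\in\mathcal R$ is nonzero, dominated by $(\mu,\nu)$, and misses the point $(x_0,y_3)$, so $\mathrm{supp}(\tilde\mu+\tilde\nu)\subsetneq\mathrm{supp}(\eta)$ --- contradicting minimality. Hence $G$ has no vertex of degree $\ge 3$. Combining (i)--(iii), $G$ is a connected acyclic graph of maximum degree at most $2$, i.e. a path, which is the assertion of the theorem.
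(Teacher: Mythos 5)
Your graph-theoretic framing is sound, and two of your three steps do work: the connectivity argument via splitting $\eta$ along components of $G$ is correct, and your degree bound via minimality (choosing $\alpha_1,\alpha_2>0$ so that the balance at the branching vertex $x_0$ is restored using two neighbours whose $q$-values straddle $x_0$, producing a dominated pair in $\mathcal{R}$ whose support misses $(x_0,y_3)$) is a clean alternative to the paper's route. But step (iii), acyclicity, is where the substance of the theorem lies, and there you have a genuine gap. The alternating-sign perturbation $\mu(f_i)\mapsto\mu(f_i)+(-1)^i\delta$ is admissible for some $\delta\neq 0$ precisely when the cycle admits a labelling in which the odd edges carry positive $\mu$-mass and the even edges carry positive $\nu$-mass --- that is, precisely when the cycle is alternating in the sense of Definition \ref{ALT-cylce-DEF}. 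So your unblocked case reproves Proposition \ref{NO-alt-cycle} and nothing more, and the blocked case --- edges with $q\in\{0,1\}$ arranged so that neither sign of $\delta$ is feasible --- is exactly the content that was conjectured in \cite{zhu} and that the theorem adds to the proposition. Your proposed resolution (``either an arc carries a dominated pair, or a shorter cycle can be extracted, or a relabelling makes $C$ alternating'') is not substantiated: a general cycle need not become alternating under any relabelling (e.g.\ two adjacent edges with $q=1$ followed by two with $q=0$), ``extracting a shorter cycle'' has no mechanism behind it, and the arc-splitting alternative is not developed. You explicitly defer ``the bookkeeping'' --- but that bookkeeping is the theorem.

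The paper avoids ever confronting an arbitrary cycle. It classifies support points as lower out points ($q<\min(x,y)$, hence in $\mathrm{supp}(\nu)$), upper out points ($q>\max(x,y)$, hence in $\mathrm{supp}(\mu)$) and cut points, and then runs a walk driven by the balance equations \eqref{SUMx}--\eqref{SUMy}: from a lower out point one must move within its column to a point with larger $q$, which is either a cut point or an upper out point, and symmetrically. Any cycle this walk closes is therefore automatically alternating and excluded by Proposition \ref{NO-alt-cycle}; the walk must instead terminate at cut points, and minimality (via an explicit dominated pair supported on the constructed path, built by ``cutting'' the endpoint masses) forces the path to exhaust $\mathrm{supp}(\eta)$, which yields acyclicity, connectivity and the degree bound simultaneously. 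If you want to salvage your outline, you would need an analogue of this: given any cycle in $G$, use \eqref{SUMx}--\eqref{SUMy} to reroute it through the support into an alternating one. As written, the proposal does not prove the theorem.
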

 
 Let us briefly explain the main idea of the proof. For $\eta \in \mathrm{ext}_f(\mathcal{C})$,  we inductively construct a special axial path contained in $\mathrm{supp}(\eta)$, which does not contain any cut points  (apart from the endpoints). We show that axial path obtained in this process is acyclic  and involves all points from $\mathrm{supp}(\eta)$.
 
 \begin{proof}[Proof of Theorem \ref{NO-cycle}] Fix $\eta \in \mathrm{ext}_f(\mathcal{C})$ and let $(\mu, \nu)$ be the unique representation of $\eta$.
 By $\mathcal{L}(\eta)$ and $\mathcal{U}(\eta)$ denote the sets of lower and upper out points, correspondingly.
Choose any $(x_0,y_0)\in \mathrm{supp}(\eta)$. We will consider two separate cases now:

 \medskip

\textbf{Case I:} $(x_0, y_0)$ is an out point. With no loss of generality, we can assume that  $(x_0, y_0) \in \mathcal{L}(\eta)$. 
 We then use the following inductive procedure. 
 \\ \\
\indent 1$^\circ$ Suppose we have successfully found $(x_n, y_n)\in \mathcal{L}(\eta)$ and it is the first time we have chosen a point with the $x$-coordinate equal to $x_n$. Since $(x_n, y_n)\in \mathcal{L}(\eta)$, we have $q(x_n, y_n)<x_n$. By (\ref{SUMx}), there must exist a point $(x_{n+1},y_{n+1})\in \mathrm{supp}(\eta)$ such that $x_{n+1}=x_n$ and $q(x_{n+1}, y_{n+1})>x_n$. We pick one such point and add it at the end of the path. If $(x_{n+1}, y_{n+1})$ is a cut point or an axial cycle was just created, we exit the loop. Otherwise, note that $(x_{n+1}, y_{n+1})\in \mathcal{U}(\eta)$. Go to 2$^\circ$.
\\ \\
\indent 2$^\circ$ Assume we have successfully found $(x_n, y_n)\in \mathcal{U}(\eta)$ and it is the first time we have chosen a point with the $y$-coordinate equal to $y_n$. Since $(x_n, y_n)\in \mathcal{U}(\eta)$, we have $q(x_n, y_n)>y_n$. By (\ref{SUMy}), there must exist a point $(x_{n+1},y_{n+1})\in \mathrm{supp}(\eta)$ such that $y_{n+1}=y_n$ and $q(x_{n+1}, y_{n+1})<y_n$. We pick one such point and add it at the end of the path. If $(x_{n+1}, y_{n+1})$ is a cut point or an axial cycle was just created, we exit the loop. Otherwise, note that $(x_{n+1}, y_{n+1})\in \mathcal{L}(\eta)$. Go to 1$^\circ$.

\begin{figure}[H]
\centering
\includegraphics[width=60mm]{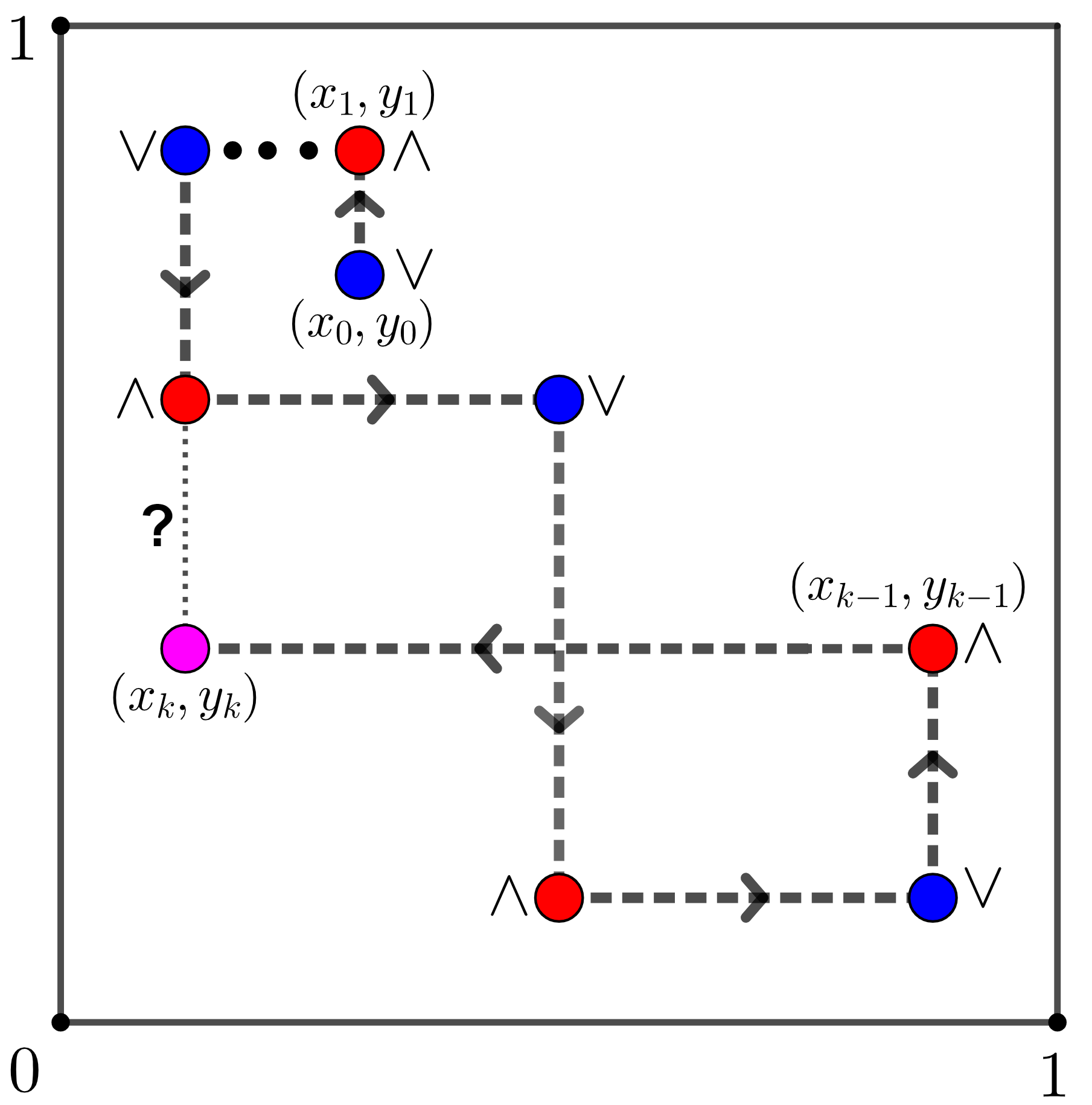}
\vspace{0.2cm}

\caption{An example of an axial path constructed by the algorithm.
Symbols $\vee, \wedge$ are placed next to lower ($\vee$) and upper ($\wedge$) out points.  Purple point $(x_k, y_k)$ is the endpoint of the path.
 Red points represent probability masses in $\mathrm{supp}(\mu)$, while blue points indicate probability masses  in $\mathrm{supp}(\nu)$.\label{rys2}}
 \vspace{0.3cm}
\includegraphics[width=60mm]{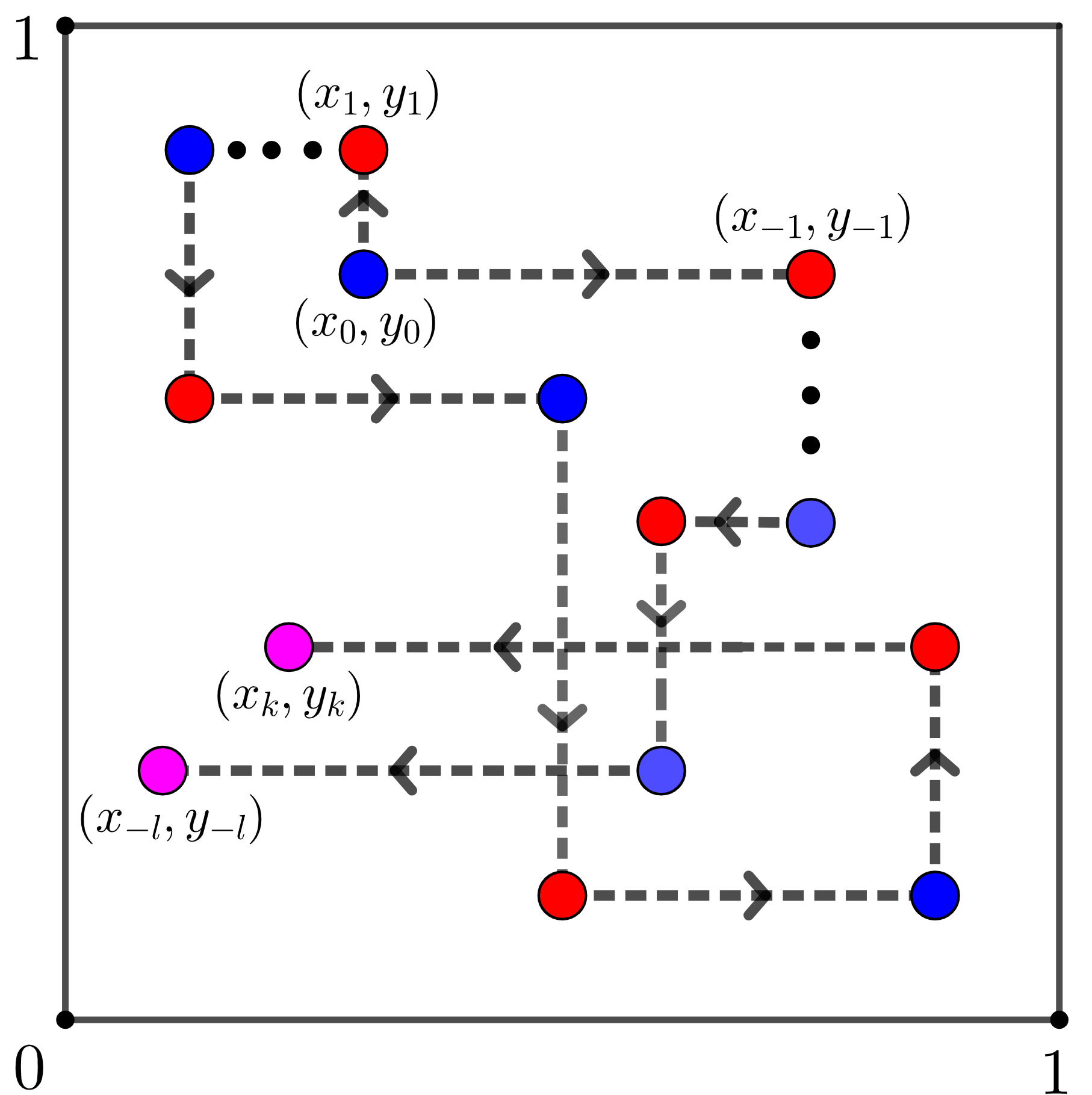}
\vspace{0.2cm}

\caption{An example of an axial path $\Gamma$ constructed after the second run of the algorithm. Purple points $(x_k, y_k)$ and $(x_{-l}, y_{-l})$  (endpoints of $\Gamma$) are cut points. Red points represent probability masses in $\mathrm{supp}(\mu)$, while blue points indicate probability masses  in $\mathrm{supp}(\nu)$.\label{rys4}}
\end{figure}

As $|\mathrm{supp}(\eta)|<\infty$, the procedure terminates after a finite number  of steps (denote it by $k$) and produces an axial path $\big((x_i,y_i)\big)_{i=0}^{k}$ contained in $\mathrm{supp}(\eta)$; to be more precise, it is also formally possible that $(x_k,y_k)$ is a third point on some horizontal or vertical line (in such a case we have obtained an axial cycle).  By the construction of the loop, point $(x_k, y_k)$ is either an endpoint of an axial cycle or a cut point. Let us show that the first alternative is impossible. 
 First, we clearly have $\mathcal{L}(\eta)\subset \mathrm{supp}(\nu)$ and  $\mathcal{U}(\eta)\subset \mathrm{supp}(\mu)$, see Figure \ref{rys2}.  Next, assume that $(x_{k-1}, y_{k-1})\in \mathcal{U}(\eta)$. This means that $(x_k, y_k)$ was found in step 2$^\circ$ and $q(x_k, y_k)<y_{k-1}\le 1$. Therefore $(x_k, y_k)\in \mathrm{supp}(\nu)$ and there exists an alternating cycle in $\mathrm{supp}(\eta)$. However, this is not possible because of Proposition \ref{NO-alt-cycle}. If $(x_{k-1}, y_{k-1})\in \mathcal{L}(\eta)$, the argument is analogous.
\smallskip

We have shown that $(x_{k}, y_k)$ is a cut point. Set $\Gamma_+=\bigcup_{i=1}^k\{(x_i,y_i)\}$. Moving on, we can return to the starting point $(x_0,y_0)$ and repeat the above construction in the reversed direction. By switching the roles of $x$ and $y$-coordinates in steps 1$^\circ$ and 2$^\circ$, we produce  another axial path  $(x_i,y_i)_{i=0}^{-l}$. 
Set $\Gamma_-=\bigcup_{i=-1}^{-l}\{(x_i,y_i)\}$ and
$$\Gamma \ = \ \Gamma_+ \cup \{(x_0,y_0)\} \cup \Gamma_-.$$
Repeating the same arguments as before, we show that $(x_{-l}, y_{-l})$ is a cut point and $\Gamma$ is an axial path  without cycles, see Figure \ref{rys4}.
\smallskip

It remains to verify that $\mathrm{supp}(\eta)=\Gamma$. This will be accomplished by showing that there exists $(\tilde{\mu}, \tilde{\nu})\in \mathcal{R}$ with $\tilde{\mu}\le \mu$, $\tilde{\nu}\le \nu$ and $\mathrm{supp}(\tilde{\mu}+\tilde{\nu})=\Gamma$. This will give the claim: by the minimality of the representation $(\mu, \nu)$, we will deduce that $\tilde{\mu}+ \tilde{\nu} = \alpha \cdot \eta$ for some $\alpha \in (0,1]$, and hence $\mathrm{supp}(\tilde{\mu}+\tilde{\nu})=\mathrm{supp}(\eta)$. 
\smallskip

We begin with the endpoints of $\Gamma$. As $(x_k, y_k)$ is a cut point, there exists $\gamma \in [0,1]$ such that $q(x_k, y_k)=\gamma x_k + (1-\gamma)y_k$. We can write 
\begin{equation} \label{cut1} \eta(x_k, y_k) \ =  \ \eta'(x_k, y_k) + \eta''(x_k, y_k),\end{equation}
where $\eta'(x_k, y_k)=\gamma\eta(x_k, y_k)$ and  $\eta''(x_k, y_k)=(1-\gamma)\eta(x_k, y_k)$. Set
\begin{equation} \label{cut2} \mu'(x_k, y_k)  =  x_k\eta'(x_k, y_k) \ \ \ \ \text{and} \ \ \ \  \mu''(x_k, y_k)=y_k\eta''(x_k, y_k).\end{equation}
By (\ref{cut1}) and (\ref{cut2}), we have
\begin{equation} \label{cut3} \mu'(x_k, y_k)+\mu''(x_k, y_k)  =  \Big(x_k\gamma+y_k(1-\gamma)\Big)\eta(x_k,y_k)  =  \mu(x_k, y_k).\end{equation}
Equations (\ref{cut1}) and (\ref{cut3}) have a clear and convenient interpretation. Namely, we can visualize it as `cutting' the point $(x_k, y_k)$ into
two separate points: $(x_k, y_k)'$ with mass $\eta'(x_k, y_k)$ and $(x_k, y_k)''$ with mass $\eta''(x_k, y_k)$. Moreover,  calculating their
quotient functions independently, we get $q'(x_k, y_k)=x_k$ and $q''(x_k, y_k)=y_k$. Performing the same `cut' operation on $(x_{-l}, y_{-l})$ we 
can divide this point into $(x_{-l}, y_{-l})'$ and $(x_{-l}, y_{-l})''$ such that $q'(x_{-l}, y_{-l})=x_{-l}$ and $q''(x_{-l}, y_{-l})=y_{-l}$.
\smallskip

Observe that $(x_k, y_k)$ and $(x_{k-1}, y_{k-1})$ have exactly one common coordinate, say $y_k=y_{k-1}$. Consequently, $(x_k, y_k)$ is the only point in $\Gamma$ with $x$-coordinate equal to $x_k$. Additionally, by (\ref{SUMy}) and  $(x_{k-1}, y_{k-1}) \in \mathcal{U}(\eta)$, this means that $q(x_k, y_k)\not=y_{k}$ and $\gamma>0$. Hence $\eta'(x_k, y_k)>0$. Similarly, suppose that $y_{-l}=y_{-l+1}$ (as presented in Figure \ref{rys4};  for other configurations of endpoints, we proceed by analogy). Thus, $(x_{-l}, y_{-l})$ is the only point in $\Gamma$ with $x$-coordinate equal to $x_{-l}$. By (\ref{SUMy}) and $(x_{-l+1}, y_{-l+1}) \in \mathcal{L}(\eta)$, we have $\eta'(x_{-l}, y_{-l})>0$. 

\smallskip

 Next, consider the following function $\tilde{q}:\Gamma \rightarrow [0,1]$ uniquely determined by the following requirements:
\begin{enumerate}[label=\arabic*.]
\item $\tilde{q}(x_k, y_k)=x_k$ (if $y_k=y_{k-1}$, as we have assumed) 
\\ or \  $\tilde{q}(x_k, y_k)=y_k$ (in the case when $x_k=x_{k-1}$),
\smallskip
\item  $\tilde{q}(x_{-l}, y_{-l})=x_{-l}$ (if $y_{-l}=y_{-l+1}$, as we have assumed) 
\\ or \  $\tilde{q}(x_{-l}, y_{-l})=y_{-l}$ (in the case when $x_{-l}=x_{-l+1}$),
\smallskip
\item  $\tilde{q}(x,y)=0$ for all $(x,y)\in \Gamma \cap \mathcal{L}(\eta)$,
\item $\tilde{q}(x,y)=1$ for all $(x,y)\in \Gamma \cap \mathcal{U}(\eta)$.
\end{enumerate}
\smallskip
 Set $\delta=\min(a,b,c,d)$, where 
\begin{center} $a= \eta'(x_k, y_k)$ \ (if $y_k=y_{k-1}$)  \ \ \ or \ \ \ $a=\eta''(x_k, y_k)$ \ (if $x_{k}=x_{k-1}$),\end{center}
\begin{center} $b= \eta'(x_{-l}, y_{-l})$ \ (if $y_{-l}=y_{-l+1}$)  \ \ \ or \ \ \ $b=\eta''(x_{-l}, y_{-l})$ \ (if $x_{-l}=x_{-l+1}$),\end{center}
 \begin{center}$c  =  \min_{(x,y)\in \Gamma \cap \mathcal{L}(\eta)}\nu(x, y)$, \ \ \  $d  =  \min_{(x,y)\in \Gamma \cap \mathcal{U}(\eta)}\mu(x, y).$\end{center}
 Then $\delta>0$, which follows from the previous discussion. Finally, using the acyclic path structure of $\Gamma$ and Proposition \ref{3.1+3.2} (just as in Example \ref{exe1}), we are able to find a pair $(\tilde{\mu}, \tilde{\nu})\in \mathcal{R}$ with $\mathrm{supp}(\tilde{\mu}+\tilde{\nu})=\Gamma$ and a quotient function $q_{(\tilde{\mu}, \tilde{\nu})}=\tilde{q}$. Letting
 $$\beta \ = \ \delta\cdot \Big(\max_{(x,y)\in \Gamma}(\tilde{\mu}+\tilde{\nu})(x,y) \Big)^{-1},$$
 we see that $\beta\tilde{\mu} \le \mu$ and  $\beta\tilde{\nu} \le \nu$, as desired.

\smallskip
 
\textbf{Case II:} $(x_0, y_0)$ is a cut point. Suppose that $x_0=y_0$ and $q(x_0, x_0)=x_0$. Put
 $$\tilde{\mu}  =  \mathbbm{1}_{\{(x_0, x_0)\}} x_0 \eta(x_0, y_0) \ \ \ \ \text{and} \ \ \ \  \tilde{\nu}=\mathbbm{1}_{\{(x_0, x_0)\}}(1-x_0)\eta(x_0, y_0).$$
 We have $(\tilde{\mu}, \tilde{\nu})\in \mathcal{R}$ and $\tilde{\mu}\le \mu$,  $\tilde{\nu}\le \nu$. Hence $\mathrm{supp}(\eta)=\{(x_0, x_0)\}$.
  Next, assume that  $x_0\not=y_0$. In that case, $q(x_0, y_0)$ cannot be equal to both $x_0$ and $y_0$ at the same time.  
 This means that we can proceed just as in Case I (at least in one direction). The only difference is that we have 
 already located one of the cut points -- there is no need to apply the procedure twice.
 \end{proof}
  
  From the proof provided, we can deduce yet another significant conclusion.

\begin{cor}\label{01-UL}  If $\eta \in \mathrm{ext}_f(\mathcal{C})$, then $q(x,y)=0$  for all  $(x,y)\in\mathcal{L}(\eta)$ 
and
$q(x,y)=1$ for all $(x,y)\in\mathcal{U}(\eta)$. 
Except for the endpoints of this axial path (which are cut points), $\mathrm{supp}(\eta)$ consists of lower and upper out points, appearing alternately. \end{cor}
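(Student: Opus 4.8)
The plan is to read this statement off the construction already carried out in the proof of Theorem \ref{NO-cycle}; essentially no new argument is needed. Fix $\eta\in\mathrm{ext}_f(\mathcal{C})$ with its unique minimal representation $(\mu,\nu)$. That proof produces an axial path $\Gamma$ without cycles, together with a scalar $\beta>0$ and a pair $(\tilde{\mu},\tilde{\nu})\in\mathcal{R}$ such that $\mathrm{supp}(\tilde{\mu}+\tilde{\nu})=\Gamma$, $\beta\tilde{\mu}\le\mu$, $\beta\tilde{\nu}\le\nu$, and whose quotient function is the explicitly prescribed $\tilde{q}$; by construction $\tilde{q}\equiv 0$ on $\Gamma\cap\mathcal{L}(\eta)$, $\tilde{q}\equiv 1$ on $\Gamma\cap\mathcal{U}(\eta)$, and the two endpoints of $\Gamma$ are cut points. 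Moreover the proof of Theorem \ref{NO-cycle} already establishes $\Gamma=\mathrm{supp}(\eta)$, so in particular $\mathcal{L}(\eta)\cup\mathcal{U}(\eta)\subset\Gamma$.

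First I would invoke minimality in its full strength. Since $\mathcal{R}$ is stable under multiplication by a nonnegative constant, $(\beta\tilde{\mu},\beta\tilde{\nu})\in\mathcal{R}$, and it satisfies $\beta\tilde{\mu}\le\mu$, $\beta\tilde{\nu}\le\nu$; hence minimality of $(\mu,\nu)$ (Definition \ref{Un-Mi}) forces $(\beta\tilde{\mu},\beta\tilde{\nu})=\alpha\cdot(\mu,\nu)$ for some $\alpha\in(0,1]$, the positivity of $\alpha$ coming from $\tilde{\mu}+\tilde{\nu}\neq 0$. Dividing the two coordinates, the quotient functions agree on $\mathrm{supp}(\eta)$, namely $q_{(\mu,\nu)}=q_{(\beta\tilde{\mu},\beta\tilde{\nu})}=q_{(\tilde{\mu},\tilde{\nu})}=\tilde{q}$. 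In particular $q(x,y)=0$ for every $(x,y)\in\mathcal{L}(\eta)$ and $q(x,y)=1$ for every $(x,y)\in\mathcal{U}(\eta)$, which is the first assertion of the corollary.

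For the alternation statement I would re-examine steps 1$^\circ$ and 2$^\circ$ of the path-building algorithm. Each interior vertex of $\Gamma$ is reached from an out point of the opposite type: in step 1$^\circ$ a lower out point $(x_n,y_n)$, for which $q(x_n,y_n)<x_n$, is followed by a point $(x_{n+1},y_{n+1})$ with $x_{n+1}=x_n$ and $q(x_{n+1},y_{n+1})>x_n=x_{n+1}$; unless this point is a cut point --- which, by the analysis in the proof of Theorem \ref{NO-cycle}, can occur only at the endpoint --- the alternative $q(x_{n+1},y_{n+1})\le y_{n+1}$ is impossible, as it would give $x_{n+1}\le q(x_{n+1},y_{n+1})\le y_{n+1}$, making $(x_{n+1},y_{n+1})$ a cut point; therefore $q(x_{n+1},y_{n+1})>\max(x_{n+1},y_{n+1})$ and $(x_{n+1},y_{n+1})\in\mathcal{U}(\eta)$. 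Step 2$^\circ$ is treated symmetrically. Running the procedure in both directions from $(x_0,y_0)$ and gluing, we obtain that the interior vertices of $\Gamma=\mathrm{supp}(\eta)$ are out points whose types alternate, while the two endpoints are cut points; in the degenerate case $\mathrm{supp}(\eta)=\{(x_0,x_0)\}$ the statement is vacuous.

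The only slightly delicate point is to upgrade the conclusion recorded in the proof of Theorem \ref{NO-cycle} --- where only $\tilde{\mu}+\tilde{\nu}=\alpha\eta$ is written down --- to the equality of \emph{pairs} $(\beta\tilde{\mu},\beta\tilde{\nu})=\alpha(\mu,\nu)$, since it is this componentwise proportionality, rather than proportionality of the sums alone, that pins down $q_{(\mu,\nu)}=\tilde{q}$. Everything else is a direct transcription of the earlier construction, so I do not expect any real obstruction.
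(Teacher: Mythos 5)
Your proposal is correct and follows exactly the route the paper takes: the published proof of this corollary is just the remark that everything follows from the construction in the proof of Theorem \ref{NO-cycle}, and you faithfully extract the relevant pieces (the componentwise proportionality $(\beta\tilde{\mu},\beta\tilde{\nu})=\alpha(\mu,\nu)$ from minimality, hence $q=\tilde{q}$, plus the alternation built into steps 1$^\circ$ and 2$^\circ$). Your ``delicate point'' is handled correctly, since Definition \ref{Un-Mi} gives proportionality of the pair, not merely of the sum.
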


\begin{proof} Note that  $\mathcal{L}(\eta)$ and $\mathcal{U}(\eta)$ are well defined as the representation of $\eta$ is unique.
The statement follows directly from the proof of Theorem \ref{NO-cycle}.
 \end{proof}

\section{Asymptotic estimate}
 Equipped with the machinery developed in the previous sections, we are ready to establish the asymptotic estimate (\ref{2/e}). We need  to clarify how the properties of $\mathrm{ext}_f(\mathcal{C})$ covered in the preceding part apply to this problem. Referring to the prior notation, we will write
 $$(X, Y) \in \mathcal{C}_f   \ \ \ \ \  \text{or}  \ \ \ \ \ (X, Y) \in   \mathrm{ext}_f(\mathcal{C}),$$
to indicate that the distribution of a random vector $(X,Y)$ is a coherent (or an extremal coherent) measure with finite support.

\begin{prop} For any $\alpha>0$, we have
$$ \sup_{(X,Y)\in \mathcal{C}}\mathbb{E}|X-Y|^{\alpha} \ \ = \ \ \sup_{(X,Y)\in \mathcal{C}_f}\mathbb{E}|X-Y|^{\alpha}.$$
\end{prop}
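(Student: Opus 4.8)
The plan is to establish the non‑trivial inequality $\sup_{(X,Y)\in\mathcal{C}}\mathbb{E}|X-Y|^{\alpha}\le\sup_{(X,Y)\in\mathcal{C}_f}\mathbb{E}|X-Y|^{\alpha}$ by a dyadic discretisation of an arbitrary coherent vector; the reverse inequality is immediate since $\mathcal{C}_f\subset\mathcal{C}$. Fix $(X,Y)\in\mathcal{C}$. By Proposition \ref{def2} we may assume that $(X,Y)$ is defined on a probability space $(\Omega,\mathcal{F},\mathbb{P})$ carrying a random variable $Z$ with $0\le Z\le 1$ and $X=\mathbb{E}(Z|X)$, $Y=\mathbb{E}(Z|Y)$ almost surely. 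For $n\ge 1$, let $\mathcal{G}_n=\sigma(\lfloor 2^{n}X\rfloor)$ and $\mathcal{H}_n=\sigma(\lfloor 2^{n}Y\rfloor)$, and put $X_n=\mathbb{E}(Z|\mathcal{G}_n)$, $Y_n=\mathbb{E}(Z|\mathcal{H}_n)$.

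First I would verify that $(X_n,Y_n)\in\mathcal{C}_f$. The joint law of $(X_n,Y_n)$ has finite support, since $X_n$ and $Y_n$ each take at most $2^{n}+1$ values. For coherence, observe that $X_n$ is $\mathcal{G}_n$‑measurable, so $\sigma(X_n)\subset\mathcal{G}_n$, whence the tower property gives $\mathbb{E}(Z|X_n)=\mathbb{E}\big(\mathbb{E}(Z|\mathcal{G}_n)\,\big|\,X_n\big)=\mathbb{E}(X_n|X_n)=X_n$, and symmetrically $\mathbb{E}(Z|Y_n)=Y_n$. By Proposition \ref{def2}, $(X_n,Y_n)$ is coherent, hence $(X_n,Y_n)\in\mathcal{C}_f$.

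Next I would pass to the limit. Since $\lfloor 2^{n}X\rfloor$ is a deterministic function of $\lfloor 2^{n+1}X\rfloor$, the sequence $(\mathcal{G}_n)_{n\ge1}$ is a filtration, and $\bigvee_{n}\mathcal{G}_n=\sigma(X)$ because the dyadic events $\{k2^{-n}\le X<(k+1)2^{-n}\}$ generate $\sigma(X)$; likewise $\bigvee_{n}\mathcal{H}_n=\sigma(Y)$. By Lévy's upward martingale convergence theorem, $X_n\to\mathbb{E}(Z|\sigma(X))=\mathbb{E}(Z|X)=X$ and $Y_n\to Y$ almost surely, so $|X_n-Y_n|^{\alpha}\to|X-Y|^{\alpha}$ almost surely; as all variables take values in $[0,1]$, the bounded convergence theorem yields $\mathbb{E}|X_n-Y_n|^{\alpha}\to\mathbb{E}|X-Y|^{\alpha}$. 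Consequently $\sup_{\mathcal{C}_f}\mathbb{E}|X-Y|^{\alpha}\ge\mathbb{E}|X-Y|^{\alpha}$, and taking the supremum over $(X,Y)\in\mathcal{C}$ completes the argument. This works verbatim for every $\alpha>0$.

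The only points requiring care — and hence the ``hard part'', though it is quite mild — are (a) checking that the discretised vector remains coherent, which is precisely the tower‑property identity $\mathbb{E}(Z|X_n)=X_n$ above, and (b) identifying $\bigvee_n\mathcal{G}_n$ with $\sigma(X)$ so that the martingale limit is exactly $X$ rather than a coarser approximation; the remaining steps are routine. (Alternatively, one could invoke that the functional $m\mapsto\int|x-y|^{\alpha}\,\mathrm{d}m$ is weakly continuous and $\mathcal{C}$ is compact, reducing to extreme points, but that route does not by itself give \emph{finite} support, so the direct discretisation is preferable.)
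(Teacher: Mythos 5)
Your proof is correct and follows the same strategy as the paper: approximate an arbitrary coherent vector by finitely supported coherent vectors and pass to the limit under the bounded integrand $|x-y|^{\alpha}\le 1$. The only difference is that the paper simply cites \cite{contra, mastersthesis} for the existence of an approximating sequence $(X_n,Y_n)\in\mathcal{C}_f$ with $|X-X_n|,|Y-Y_n|\le 1/n$ a.s., whereas you construct it explicitly via dyadic conditioning $X_n=\mathbb{E}(Z\mid\sigma(\lfloor 2^nX\rfloor))$ and invoke L\'evy's upward convergence theorem -- a self-contained and equally valid route.
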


\begin{proof} Fix any $(X,Y)\in \mathcal{C}$. As shown in \cite{contra, mastersthesis}, there exists a sequence $(X_n,Y_n)\in \mathcal{C}_f$ such that
\begin{equation} \label{XnYn} \max\Big\{|X-X_n|, |Y-Y_n| \Big\} \ \le \ \frac{1}{n}, \ \ \ \ \ \  \mbox{for all}\   n=1,\,2,\,\ldots \end{equation}
almost surely.  Consequently, by dominated convergence and (\ref{XnYn}), we obtain
$$\mathbb{E}|X-Y|^{\alpha} \ \ = \ \ \lim_{n \rightarrow \infty}  \mathbb{E}|X_n-Y_n|^{\alpha},$$
and thus
$$\mathbb{E}|X-Y|^{\alpha} \ \  \le   \ \ \sup_{n\in \mathbb{N}} \  \mathbb{E}|X_n-Y_n|^{\alpha} \ \ \le  \ \ \sup_{(X,Y)\in \mathcal{C}_f}  \mathbb{E}|X-Y|^{\alpha}.$$
This proves the `$\le$'-inequality, while in the reversed direction it is evident.
 \end{proof}

Next, we will apply the celebrated Krein--Milman theorem, see \cite{rudin}.

\begin{thm}[Krein--Milman]  A compact convex subset of a Hausdorff locally convex topological vector space is equal to the closed convex hull of its extreme points. \end{thm}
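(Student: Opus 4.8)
The plan is to reproduce the classical two-part argument (as in Rudin). Write $K$ for the compact convex set inside a Hausdorff locally convex space $V$, and call a nonempty closed $S\subseteq K$ an \emph{extreme set} (face) of $K$ if $x\in S$ with $x=ty+(1-t)z$, $y,z\in K$, $t\in(0,1)$, forces $y,z\in S$; a singleton $\{p\}$ is an extreme set precisely when $p$ is an extreme point of $K$. The key stability observation is: if $S$ is an extreme set and $\Lambda\in V^{*}$, then $S_{\Lambda}:=\{x\in S:\Lambda(x)=\max_{S}\Lambda\}$ is again an extreme set --- the maximum exists by compactness of $S$, and the defining property follows at once from $\Lambda\big(ty+(1-t)z\big)=t\Lambda(y)+(1-t)\Lambda(z)$ together with $\Lambda(y),\Lambda(z)\le\max_{S}\Lambda$.

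Next I would apply Zorn's lemma to the family of extreme sets of $K$ ordered by reverse inclusion: every chain is bounded above by the intersection of its members, which is nonempty by the finite intersection property (compactness) and is easily seen to be an extreme set. Hence minimal extreme sets exist; I claim any minimal extreme set $M$ is a singleton. Indeed, if $M$ contained two distinct points $p,q$, the Hahn--Banach theorem --- the step where the Hausdorff and local convexity hypotheses are essential --- produces a continuous linear functional with $\Lambda(p)\ne\Lambda(q)$, so $M_{\Lambda}$ is a proper nonempty extreme subset of $M$, contradicting minimality. Running the same reasoning inside an arbitrary extreme set shows that every extreme set of $K$ contains an extreme point of $K$; in particular $\mathrm{ext}(K)\neq\emptyset$.

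Finally, set $E:=\overline{\mathrm{co}}\big(\mathrm{ext}(K)\big)$, which is contained in $K$ since $K$ is closed and convex. If $E\neq K$, pick $x_{0}\in K\setminus E$; applying the Hahn--Banach separation theorem to the disjoint compact convex set $\{x_{0}\}$ and the closed convex set $E$ yields a continuous linear functional $\Lambda$ and $\gamma\in\mathbb{R}$ with $\Lambda(x_{0})>\gamma\ge\sup_{E}\Lambda$. Let $K_{\Lambda}:=\{x\in K:\Lambda(x)=\max_{K}\Lambda\}$, a nonempty extreme set of $K$ by compactness and the stability observation. By the previous step $K_{\Lambda}$ contains an extreme point $p$ of $K$, and $p\in E$; but then
\[
\Lambda(p)\ \le\ \sup_{E}\Lambda\ <\ \Lambda(x_{0})\ \le\ \max_{K}\Lambda\ =\ \Lambda(p),
\]
a contradiction. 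Hence $E=K$, which is the assertion.

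The main obstacle is not a single hard computation but the two non-elementary ingredients on which everything hinges: the Hahn--Banach separation theorem, which is precisely where the Hausdorff and local-convexity assumptions cannot be dropped (the conclusion genuinely fails without local convexity), and the appeal to Zorn's lemma to extract a minimal extreme set --- supplemented by the compactness bookkeeping guaranteeing that the relevant suprema are attained and that nested intersections of extreme sets remain nonempty.
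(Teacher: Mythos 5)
Your proof is correct and complete: it is the classical Rudin-style argument (extreme sets, stability under maximization of continuous functionals, Zorn's lemma to extract singleton minimal faces, then Hahn--Banach separation), which is exactly the source the paper cites for this theorem without reproducing a proof. Nothing to add.
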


The above statement enables us to restrict the analysis of the estimate \eqref{2/e} to extremal measures. Precisely, we have the following statement.

\begin{prop} \label{Cf==extf} For any $\alpha>0$, we have
$$ \sup_{(X,Y)\in \mathcal{C}_f}\mathbb{E}|X-Y|^{\alpha} \ \ = \ \ \sup_{(X,Y)\in \mathrm{ext}_f(\mathcal{C})}\mathbb{E}|X-Y|^{\alpha}.$$ \end{prop}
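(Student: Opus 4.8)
The plan is to prove the two inequalities separately; the ``$\ge$'' direction is immediate since $\mathrm{ext}_f(\mathcal{C}) \subset \mathcal{C}_f$, so the content is in the ``$\le$'' direction. Fix $\alpha > 0$ and an arbitrary $(X,Y) \in \mathcal{C}_f$ with distribution $m$, supported on a finite set $S \subset [0,1]^2$. The key structural fact is that $\mathcal{C}_f^S := \{m' \in \mathcal{C} : \mathrm{supp}(m') \subseteq S\}$ is a compact convex subset of a finite-dimensional vector space (it is cut out of the simplex of probability measures on $S$ by the finitely many linear constraints coming from Definition~\ref{R-set}, via Proposition~\ref{m=mu+nu}). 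By the Krein--Milman theorem, $m$ is a convex combination $m = \sum_{j} \lambda_j m_j$ of extreme points $m_j$ of $\mathcal{C}_f^S$; since $S$ is finite each $m_j$ has finite support, so $m_j \in \mathrm{ext}_f(\mathcal{C})$ provided we check that the extreme points of $\mathcal{C}_f^S$ are genuinely extreme in $\mathcal{C}$. This last point is a standard ``face'' argument: if $m_j = \tfrac12(a+b)$ with $a,b \in \mathcal{C}$, then comparing supports (a coherent measure dominated in the convex-combination sense by one supported on $S$ must itself be supported on $S$ — because all coefficients are nonnegative) forces $a,b \in \mathcal{C}_f^S$, whence $a = b = m_j$ by extremality in $\mathcal{C}_f^S$.

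Granting the decomposition $m = \sum_j \lambda_j m_j$ with $m_j \in \mathrm{ext}_f(\mathcal{C})$ and $\lambda_j \ge 0$, $\sum_j \lambda_j = 1$, the functional $m' \mapsto \mathbb{E}_{m'}|X-Y|^\alpha = \int_{[0,1]^2} |x-y|^\alpha \, \mathrm{d}m'(x,y)$ is linear in $m'$, so
\begin{equation*}
\mathbb{E}_m |X-Y|^\alpha \ = \ \sum_j \lambda_j \, \mathbb{E}_{m_j}|X-Y|^\alpha \ \le \ \max_j \mathbb{E}_{m_j}|X-Y|^\alpha \ \le \ \sup_{(X,Y) \in \mathrm{ext}_f(\mathcal{C})} \mathbb{E}|X-Y|^\alpha.
\end{equation*}
Taking the supremum over all $(X,Y) \in \mathcal{C}_f$ yields the ``$\le$'' inequality and completes the proof.

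I would carry out the steps in this order: first record that $\mathcal{C}_f^S$ is convex and compact in $\mathbb{R}^S$ (finite dimension, closed because the defining constraints are closed conditions, bounded inside the probability simplex); second apply Krein--Milman (or simply the finite-dimensional fact that a compact convex set is the convex hull of its extreme points, so the convex combination is actually finite); third verify $\mathrm{ext}(\mathcal{C}_f^S) \subseteq \mathrm{ext}_f(\mathcal{C})$ via the support-comparison/face argument; fourth conclude by linearity of the integral functional as displayed above.

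The main obstacle — really the only non-routine point — is the third step: showing that extremality of $m_j$ \emph{within} the restricted class $\mathcal{C}_f^S$ upgrades to extremality within the full class $\mathcal{C}$. The crux is that if $m_j = \beta a + (1-\beta) b$ with $a, b \in \mathcal{C}$ and $\beta \in (0,1)$, then for any point $(x,y) \notin S$ one has $0 = m_j(\{(x,y)\}) = \beta a(\{(x,y)\}) + (1-\beta) b(\{(x,y)\})$ with both summands nonnegative, forcing $a(\{(x,y)\}) = b(\{(x,y)\}) = 0$; hence $\mathrm{supp}(a), \mathrm{supp}(b) \subseteq S$ and $a, b \in \mathcal{C}_f^S$, so extremality in $\mathcal{C}_f^S$ gives $a = b = m_j$. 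Everything else is bookkeeping. (One could alternatively bypass Krein--Milman entirely and invoke Carathéodory in $\mathbb{R}^S$, but the above is cleanest given that the Krein--Milman theorem is already stated in the excerpt.)
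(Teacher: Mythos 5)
Your proof is correct, and it follows the same master plan as the paper: restrict attention to the face $\mathcal{C}_m=\{m'\in\mathcal{C}: \mathrm{supp}(m')\subseteq \mathrm{supp}(m)\}$, show by the support/nonnegativity argument that its extreme points are genuinely extreme in $\mathcal{C}$ (the paper verifies $\mathrm{ext}(\mathcal{C}_m)=\mathcal{C}_m\cap\mathrm{ext}_f(\mathcal{C})$ in exactly the way you describe), decompose $m$ over these extreme points, and conclude by linearity of $m'\mapsto\int|x-y|^\alpha\,\mathrm{d}m'$. Where you genuinely diverge is in the decomposition step: the paper treats $\mathcal{C}_m$ as a subset of the infinite-dimensional dual $C([0,1]^2)^*$ with the weak$^*$ topology, invokes Banach--Alaoglu for compactness and the full Krein--Milman theorem, and therefore only obtains $m$ as a weak$^*$ limit of finite convex combinations $m_n$ of extreme points, which forces an extra approximation step using the continuity of $|x-y|^\alpha$. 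You instead observe that everything lives in $\mathbb{R}^S$ with $S=\mathrm{supp}(m)$ finite, where compactness is elementary and Minkowski/Carath\'eodory gives an \emph{exact} finite convex combination $m=\sum_j\lambda_j m_j$; this eliminates the limiting argument entirely and is arguably cleaner. One small imprecision: $\mathcal{C}_f^S$ is not literally ``cut out of the simplex by the linear constraints of Definition~\ref{R-set},'' since those constraints apply to the pair $(\mu,\nu)$ rather than to $m$ itself; $\mathcal{C}_f^S$ is the image under $(\mu,\nu)\mapsto\mu+\nu$ of a polytope in $\mathbb{R}^S\times\mathbb{R}^S$, hence still a compact convex polytope, so your conclusion stands, but the justification should be phrased as a projection of a polytope (or one can simply cite the weak$^*$ closedness of $\mathcal{C}$ from the literature and intersect with the measures supported on $S$).
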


\begin{proof} Let $Z=C([0,1]^2, \mathbb{R})$; then $Z^*$ is the space of finite signed Borel measures with the total variation norm $\| \cdot \|_{\mathrm{TV}}$. Let us equip $Z^*$ with the topology of weak$^*$ convergence. Under this topology, $Z^*$ is a Hausdorff and a locally convex space. For a fixed $m\in \mathcal{C}_f$, let 
$$\mathcal{C}_m \ = \ \{m'\in \mathcal{C}_f: \ \mathrm{supp}(m')\subseteq \mathrm{supp}(m) \}$$
denote the family of  coherent distributions supported on the subsets of $\mathrm{supp}(m)$. Firstly, observe that $\mathcal{C}_m$ is convex. Secondly, we can easily verify that $\mathrm{ext}(\mathcal{C}_m)=\mathcal{C}_m\cap \mathrm{ext}_f(\mathcal{C})$. Plainly, if $m'\in \mathcal{C}_m$ and $m'=\alpha\cdot m_1 + (1-\alpha)\cdot m_2$ for some $\alpha\in (0,1)$ and $m_1, m_2 \in \mathcal{C}$, then $\mathrm{supp}(m')= \mathrm{supp}(m_1) \cup \mathrm{supp}(m_2)$ and we must have $m_1, m_2 \in \mathcal{C}_m$. Hence $\mathrm{ext}(\mathcal{C}_m)\subset \mathrm{ext}_f(\mathcal{C})$, whereas $\mathrm{ext}_f(\mathcal{C})\cap \mathcal{C}_m \subset \mathrm{ext}(\mathcal{C}_m)$ is obvious.

 Moreover, we claim that $\mathcal{C}_m$ is compact in the weak$^*$ topology. Indeed, by the Banach--Alaoglu theorem, 
$$B_{Z^*} \ = \ \{\mu \in Z^*: \ \|\mu\|_{\mathrm{TV}}\le 1\}$$  is weak$^*$ compact. As $\mathcal{C}_m\subset B_{Z^{*}}$, it remains  to check that
$\mathcal{C}_m$ is weak$^*$ closed. We can write $\mathcal{C}_m=\mathcal{C}\cap \mathcal{P}_m$, where $\mathcal{P}_m$ stands for the set of all probability measures supported on the subsets of $\mathrm{supp}(m)$. Note that  $\mathcal{P}_m$  is clearly weak$^*$ closed. Lastly, coherent distributions
on $[0,1]^2$ are also  weak$^*$  closed, as demonstrated in \cite{pitman}.

Thus, by  Krein--Milman theorem, there exists a sequence $(m_n)_{n=1}^{\infty}$ with values in $\mathcal{C}_m$, satisfying 
 \begin{equation} \label{m_n:convexhull} m_n \ = \ \beta_1^{(n)}\eta_1^{(n)}+\beta_2^{(n)}\eta_2^{(n)}+\dots +\beta_{k_n}^{(n)}\eta_{k_n}^{(n)}, \end{equation}
where \ $\eta_{1}^{(n)}, \dots, \eta_{k_n}^{(n)} \in \mathrm{ext}(\mathcal{C}_m)$ \ and $\beta_{1}^{(n)}, \dots, \beta_{k_n}^{(n)}$ are positive numbers summing up to $1$, such that
\begin{equation} \label{m_n->*m} \int_{[0,1]^2}f \ \mathrm{d}m_n \ \longrightarrow \ \int_{[0,1]^2}f \ \mathrm{d}m, \end{equation}
 for all bounded, continuous functions $f:[0,1]^2\rightarrow \mathbb{R}$. Put $f(x,y)=|x-y|^{\alpha}$. By (\ref{m_n->*m})  and (\ref{m_n:convexhull}), we have
 \begin{align*}\int_{[0,1]^2}|x-y|^{\alpha} \ \mathrm{d}m \ \ \ &\le  \ \ \ \sup_{n\in \mathbb{N}}  \int_{[0,1]^2}|x-y|^{\alpha} \ \mathrm{d}m_n\\
 &\le  \ \ \ \sup_{\substack{n\in \mathbb{N}, \\1\le i \le k_n}}  \int_{[0,1]^2}|x-y|^{\alpha} \ \mathrm{d}\eta_i^{(n)}\\
   &\le \ \ \ \sup_{\eta \in  \mathrm{ext}_f(\mathcal{C})}  \int_{[0,1]^2}|x-y|^{\alpha} \ \mathrm{d}\eta,\end{align*}
 and hence
 $$ \sup_{(X,Y)\in \mathcal{C}_f}\mathbb{E}|X-Y|^{\alpha} \ \ \le \ \ \sup_{(X,Y)\in \mathrm{ext}_f(\mathcal{C})}\mathbb{E}|X-Y|^{\alpha}.$$ 
 The reverse inequality is clear.
\end{proof}

Now, we have the following significant reduction. Denote by $\mathcal{S}$ the family of all finite sequences $\textbf{z}=(z_0, z_1, \dots, z_{n+1})$, \ $n\in \mathbb{N}$,  with $z_0=z_{n+1}=0$,  $\sum_{i=1}^n z_i=1$   and   $z_i>0$ for $i=1,2,\dots, n$. We emphasize that $n=n(\textbf{z})$, the length of $\textbf{z}$, is also allowed to vary. In what follows, we will write $n$ instead of $n(\textbf{z})$; this should not lead to any confusion. 

\begin{prop} \label{SUP-->PHI} For any $\alpha \ge 1$, we have
\begin{equation} \label{EqLemma0} \sup_{(X,Y) \in \mathrm{ext}_f(\mathcal{C})}\mathbb{E}|X-Y|^{\alpha}  \ \ =  \ \ \sup_{{\textbf{z}\in \mathcal{S}}}\ \sum_{i=1}^{n} z_i  \Big| \frac{z_i}{z_{i-1}+z_i}-\frac{z_i}{z_i+z_{i+1}} \Big|^{\alpha}.\end{equation}
\end{prop}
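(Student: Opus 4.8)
The plan is to exploit Corollary \ref{01-UL}, which describes completely the structure of an extremal measure $\eta \in \mathrm{ext}_f(\mathcal{C})$: its support is an acyclic axial path whose interior points alternate between lower out points (where $q=0$, i.e. all the mass sits in $\nu$) and upper out points (where $q=1$, i.e. all the mass sits in $\mu$), the two endpoints being cut points. I would enumerate the support of $\eta$ along this path as $p_1, p_2, \ldots, p_m$ with consecutive points sharing exactly one coordinate, and let $z_i = \eta(p_i)$, so $\sum z_i = 1$ and $z_i > 0$. The combinatorial identities \eqref{SUMx} and \eqref{SUMy} from Proposition \ref{3.1+3.2}, applied at each vertical line $\{x = x_i\}$ and each horizontal line $\{y = y_i\}$ of the path (each of which contains at most two support points, by the path structure), become \emph{local} two-term relations: at an interior point $p_i$, knowing which neighbours $p_{i-1}, p_{i+1}$ of $p_i$ share its $x$-coordinate resp. its $y$-coordinate, one solves for $x_i$ and $y_i$ explicitly in terms of $z_{i-1}, z_i, z_{i+1}$. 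The key computation is that at an upper out point one gets the coordinate lying on the shared line equal to $\tfrac{z_i}{z_{i-1}+z_i}$-type ratios (and symmetrically at a lower out point), so that $|x_i - y_i| = \bigl| \tfrac{z_i}{z_{i-1}+z_i} - \tfrac{z_i}{z_i+z_{i+1}} \bigr|$ after matching up the orientation conventions $z_0 = z_{m+1} = 0$ at the endpoints. This yields $\mathbb{E}|X-Y|^\alpha = \sum_{i=1}^m z_i |x_i - y_i|^\alpha$ in exactly the form of the right-hand side of \eqref{EqLemma0}, giving the inequality ``$\le$''.

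For the reverse inequality ``$\ge$'', given any $\textbf{z} = (z_0, \ldots, z_{n+1}) \in \mathcal{S}$ I would explicitly construct a coherent measure with finite support realizing the corresponding sum. The natural construction is to place atoms $p_1, \ldots, p_n$ along a ``staircase'' axial path: alternately shared $x$- and $y$-coordinates, with $p_i$ carrying mass $z_i$, and with coordinates chosen exactly as dictated by solving \eqref{SUMx}, \eqref{SUMy} backwards — i.e. the coordinate of $p_i$ on the line shared with $p_{i-1}$ is forced by the two-term barycenter condition on that line, and similarly on the line shared with $p_{i+1}$. One then checks via Proposition \ref{3.1+3.2} that the pair $(\mu, \nu)$ with $q \equiv 1$ on the ``$\mu$-side'' vertices and $q \equiv 0$ on the ``$\nu$-side'' vertices (and appropriate split at the two endpoints) lies in $\mathcal{R}$, so that $\mu + \nu$ is coherent by Proposition \ref{m=mu+nu}; hence the supremum over $\mathcal{C}_f$ (and thus, by Proposition \ref{Cf==extf}, over $\mathrm{ext}_f(\mathcal{C})$) dominates the $\textbf{z}$-sum. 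One subtlety: the constructed measure need not itself be extremal, but that is harmless since we only need membership in $\mathcal{C}_f$ for the ``$\ge$'' direction and Proposition \ref{Cf==extf} has already reduced $\mathcal{C}_f$ to $\mathrm{ext}_f(\mathcal{C})$.

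I would organize the write-up as follows: (1) fix $\eta \in \mathrm{ext}_f(\mathcal{C})$, invoke Theorem \ref{NO-cycle} and Corollary \ref{01-UL} to get the alternating acyclic path; (2) set up the indexing and the boundary convention $z_0 = z_{n+1} = 0$, handling the two cut-point endpoints by the ``cutting'' device from the proof of Theorem \ref{NO-cycle} so that every point behaves like a pure $0$- or $1$-quotient point on each of its two incident lines; (3) carry out the two-term barycenter computation on a generic pair of adjacent lines to read off $x_i, y_i$ and hence $|x_i - y_i|$; (4) sum up to obtain ``$\le$''; (5) reverse the construction for ``$\ge$'', verifying the $\mathcal{R}$-membership via Proposition \ref{3.1+3.2}.

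The main obstacle I anticipate is bookkeeping in steps (2)–(3): the path has an orientation, and at each interior vertex the designation of which neighbour is the ``$x$-neighbour'' versus the ``$y$-neighbour'' alternates, so one must be careful that the resulting formula is genuinely symmetric and collapses to the single clean expression $\bigl|\tfrac{z_i}{z_{i-1}+z_i} - \tfrac{z_i}{z_i+z_{i+1}}\bigr|$ regardless of local configuration, including correctly at the endpoints where one of $z_{i-1}, z_{i+1}$ is replaced by $0$ (making one of the two fractions equal to $1$, consistent with a cut point coordinate being $x_i$ or $y_i$). Once the indexing is pinned down, each individual identity is just a one-line solve of a linear equation, so no deep difficulty remains beyond this combinatorial alignment.
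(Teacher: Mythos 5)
Your ``$\ge$'' direction is sound (and your remark that non-extremality of the constructed staircase measure is harmless, via Proposition \ref{Cf==extf}, is exactly right), but the ``$\le$'' direction has a genuine gap. You claim that for $\eta\in\mathrm{ext}_f(\mathcal{C})$ the two-term barycenter relations turn $\mathbb{E}|X-Y|^\alpha$ into \emph{exactly} the sum on the right of \eqref{EqLemma0}. This is false: by \eqref{SUMx}--\eqref{SUMy} the shared coordinate between atoms $i$ and $i+1$ equals the $q$-weighted average $\frac{q_iz_i+q_{i+1}z_{i+1}}{z_i+z_{i+1}}$, and this collapses to $\frac{z_i}{z_i+z_{i+1}}$-type ratios only when the quotients are $0$ and $1$. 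Corollary \ref{01-UL} guarantees this for the interior (out) points, but the two endpoints are \emph{cut} points, where $q_1$ and $q_n$ are generally strictly between $0$ and $1$ (the endpoint's quotient equals its coordinate on the line it occupies alone). Example \ref{exe1} is an explicit counterexample to your identity: there $\mathbb{E}|X-Y|^\alpha=\frac{168}{196}(1/8)^\alpha+\frac{28}{196}(1/4)^\alpha$, whereas the corresponding $\mathbf{z}=(0,\frac{84}{196},\frac{14}{196},\frac{14}{196},\frac{84}{196},0)$ gives $\frac{168}{196}(1/7)^\alpha+\frac{28}{196}(5/14)^\alpha$. Your proposed fix via the ``cutting'' device does not help: cutting an endpoint produces quotients equal to $x_k$ and $y_k$, not $0$ or $1$.

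What is missing is the argument that replaces $q_1$ (and $q_n$) by a value in $\{0,1\}$ \emph{without decreasing} the sum. This cannot be done term by term, since $q_1$ enters both the first and the second summand, and perturbing it may decrease one while increasing the other. The paper's route is to observe that the sum of the two summands involving $q_1$ is a convex function $P(q_1)$ on $[0,1]$ (this is precisely where the hypothesis $\alpha\ge1$ is used — your proposal never invokes it, which is a warning sign), hence maximized at some $q_1'\in\{0,1\}$; if $q_1'\ne q_2$ one substitutes, and if $q_1'=q_2$ one deletes the first atom and rescales, which again does not decrease the sum. After treating both endpoints this way one lands on an alternating binary quotient sequence and the clean formula follows. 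Without this convexification step your ``$\le$'' inequality is unproved.
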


\begin{proof} 
Consider an arbitrary $\eta \in \mathrm{ext}_f(\mathcal{C})$ and let $(\mu, \nu)$ be its unique representation. Recall, based on Theorem \ref{NO-cycle},  that $\mathrm{supp}(\eta)$ is an axial path without cycles. Set $\mathrm{supp}(\eta)=\{(x_i, y_i) \}_{i=1}^n$ and let $q: \mathrm{supp}(\eta) \rightarrow [0,1]$ be the quotient function associated with $(\mu, \nu)$. In this setup, by (\ref{SUMx}) and (\ref{SUMy}), we can write
\begin{equation} \label{EqLemma1} \int_{[0,1]^2}|x-y|^{\alpha} \ \mathrm{d}\eta \ \ = \ \ \sum_{i=1}^n z_i \Big|\frac{q_{i-1}z_{i-1}+q_iz_i}{z_{i-1}+z_i} -\frac{q_iz_i+q_{i+1}z_{i+1}}{z_i+z_{i+1}} \Big|^{\alpha}, \end{equation}
where  \ \ $z_0=z_{n+1}=0$, \ \ $q_0=q_{n+1}=0$,
\begin{center}$q_i=q(x_i, y_i)$  \ \ \ and \ \ \ $z_i=\eta(x_i, y_i)$,  \ \ \ \ \ \  for all \  $i=1,\,2,\,\dots,n$.\end{center}
Note that if $n=1$, then both sides of \eqref{EqLemma1} are equal to zero; hence $\eta$ does not bring any contribution to \eqref{EqLemma0}. Hence, from now on, we will assume that $n\geq 2$. Notice that by Corollary \ref{01-UL}, the sequence $(q_1, q_2, \dots, q_n)$ is given by
\begin{center} $(q_1, 0, 1, 0, 1, \dots, q_n)$ \ \ \ or \ \ \ $(q_1, 1, 0, 1, 0, \dots, q_n)$ \end{center}
-- except for $q_1$ and $q_n$,  $(q_2, \dots, q_{n-1})$ is simply an alternating binary sequence. 
Furthermore, the right-hand side of (\ref{EqLemma1}) is the sum of
\begin{equation} \label{EqLemma2}P(q_1):= \  z_1 \Big|q_1 -\frac{q_1z_1+q_{2}z_{2}}{z_1+z_{2}} \Big|^{\alpha}
\  + \   z_2 \Big|\frac{q_{1}z_{1}+q_2z_2}{z_{1}+z_2} -\frac{q_2z_2+q_{3}z_{3}}{z_2+z_{3}} \Big|^{\alpha} \end{equation}
 and some other terms not involving $q_1$.   Since $\alpha \ge1$, $P$ is a convex function on $[0,1]$ and hence it is maximized by some $q_1'\in \{0,1\}$; in the case of $P(0)=P(1)$, we choose $q_1'$ arbitrarily.  Depending on $q_1'$,  we shall now perform one of the following transformations $(q, z) \mapsto (\tilde{q}, \tilde{z})$:

\smallskip

a. If $q_1'\not=q_2$, we let $\tilde{n}=n$, $\tilde{q}_1=q_1'$ and $\tilde{q}_i=q_i$ for $i \in \{0\} \cup \{2,3,\dots, n+1\}$, $
\tilde{z}_i=z_i$ for $i \in \{0,1,\dots, n+1\}$. 
This operation only changes $q_1$ into $q_1'$ -- we increase the right-hand side of (\ref{EqLemma1}) by ``correcting'' the quotient function on the first atom.

\smallskip

b. If $q_1'=q_2$, we take  $\tilde{n}=n-1$, $\tilde{q}_0=0$, $\tilde{z}_0=0$ and
$$\tilde{q}_i=q_{i+1}, \ \ \qquad  \tilde{z}_i=\frac{z_{i+1}}{z_2+z_3+\ldots+z_n}  \ \qquad \  \mbox{ for  }\ \ \   i \in \{1,2,\dots, \tilde{n}+1\}.$$
This modification removes the first atom and rescales the remaining ones. It is easy to see that for the transformed sequences $(\tilde{q},\tilde{z})$, the right-hand side of \eqref{EqLemma1} does not decrease. 
\smallskip

Performing a similar transformation for the last summand in \eqref{EqLemma1} (
depending on $q_n'$ and $q_{n-1}$) we obtain a pair of sequences $(\tilde{q}, \tilde{z})$, such that $(\tilde{q}_1, \dots, \tilde{q}_{\tilde{n}})$ is an alternating binary sequence and
 \begin{align*} \int_{[0,1]^2}|x-y|^{\alpha} \ \mathrm{d}\eta \ \ \ &\le  \ \ \  \sum_{i=1}^{\tilde{n}} \tilde{z}_i \Big|\frac{\tilde{q}_{i-1}\tilde{z}_{i-1}+\tilde{q}_i\tilde{z}_i}{\tilde{z}_{i-1}+\tilde{z}_i} -\frac{\tilde{q}_i\tilde{z}_i+\tilde{q}_{i+1}\tilde{z}_{i+1}}{\tilde{z}_i+\tilde{z}_{i+1}} \Big|^{\alpha}\\
 &=  \ \ \  \sum_{i=1}^{\tilde{n}} \tilde{z}_i  \Big| \frac{\tilde{z}_i}{\tilde{z}_{i-1}+\tilde{z}_i}-\frac{\tilde{z}_i}{\tilde{z}_i+\tilde{z}_{i+1}} \Big|^{\alpha}\\
   &\le \ \ \sup_{\tilde{\textbf{z}}}\ \sum_{i=1}^{n} z_i  \Big| \frac{z_i}{z_{i-1}+z_i}-\frac{z_i}{z_i+z_{i+1}} \Big|^{\alpha},\end{align*}
which proves the  inequality `$\le$' in  (\ref{EqLemma0}). The reverse bound follows by a straightforward construction, involving measures with quotient functions equal to $0$ or $1$ (see \eqref{EqLemma1}).
\end{proof}

We require some further notation.  Given $\alpha>0$, let $\Phi_{\alpha}:\mathcal{S}\rightarrow [0,1]$ be defined by
$$\Phi_{\alpha}(z) \ \ = \ \ \sum_{i=1}^{n} z_i  \Big| \frac{z_i}{z_{i-1}+z_i}-\frac{z_i}{z_i+z_{i+1}} \Big|^{\alpha}.$$
By the  preceding discussion, for $\alpha\ge 1$ we have
$$\sup_{(X,Y)\in \mathcal{C}}\mathbb{E}|X-Y|^{\alpha} \  =  \ \sup_{z\in \mathcal{S}}\Phi_{\alpha}(z),$$
and our main problem amounts to the identification of
\begin{equation} \label{sup-NEW} \limsup_{\alpha \to \infty} \Big[  \alpha \cdot \sup_{z\in \mathcal{S}}\Phi_{\alpha}(z)\Big].\end{equation}
It will later become clear that $\limsup$ in (\ref{sup-NEW}) can be replaced by an ordinary limit. We begin by making some introductory observations.

\begin{defi} Fix  \ $\alpha\ge 1$ \ and let \ $\textbf{z}=(z_0, z_1, \dots, z_{n+1})$ \ be a generic element  of \ $\mathcal{S}$.  For \ $1\le i \le n$, we say that
the term (component) $z_i$ of $\textbf{z}$ is \emph{significant} if
\begin{center}  $\sqrt{\alpha} \cdot z_{i-1} <  z_i$ \ \ \ \ and \ \ \ \ $\sqrt{\alpha} \cdot z_{i}  <   z_{i+1}$,  \end{center}
or
\begin{center} $z_{i-1} >  \sqrt{\alpha} \cdot z_i$ \ \ \ \ and \ \ \ \ $z_{i}  >  \sqrt{\alpha} \cdot  z_{i+1}$. \end{center}
The set of  all significant components of $z$ will be denoted by $\phi_\alpha(z)$. 
Whenever a component  $z_i$   of   $\textbf{z}$ ($1\le i \le n$) is not  significant, we say that  $z_i$  is \emph{negligible}.
The terms $z_0$ and $z_{n+1}$ will be treated as neither significant nor negligible. 
 \end{defi}

Now we will show that the contribution of all negligible terms of $z$ to the total sum $\Phi_{\alpha}(z)$ vanishes in the limit $\alpha\to \infty$. Precisely, we have the following.

\begin{prop} \label{onlySignificant} For $\alpha\ge1$ and $z\in \mathcal{S}$, we have
$$ \Phi_{\alpha}(z) \ \ \le \ \ \Psi_{\alpha}(z) \ + \ \Big|1-\frac{1}{1+\sqrt{\alpha}}\Big|^{\alpha},$$
where $\Psi_{\alpha}:\mathcal{S}\rightarrow [0,1]$ is defined by
$$\Psi_{\alpha}(z) \ \ = \ \ \sum_{z_i \in \phi_{\alpha}(z)} z_i  \Big| \frac{z_i}{z_{i-1}+z_i}-\frac{z_i}{z_i+z_{i+1}} \Big|^{\alpha}.$$
\end{prop}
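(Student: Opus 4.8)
The plan is to bound the contribution of negligible terms and show it is dominated by a single error term. First I would observe that each summand in $\Phi_\alpha(z)$ is controlled by the corresponding increment of the ``averaged'' quantity: writing $w_i = z_i/(z_{i-1}+z_i)$, note that $0 < w_i < 1$, and the $i$-th summand equals $z_i|w_i - (1-w_{i+1})|^\alpha = z_i|w_i + w_{i+1} - 1|^\alpha$. The key point is that for a \emph{negligible} term $z_i$, at least one of the two defining inequalities fails, which forces one of the ratios $z_{i-1}/z_i$ or $z_{i+1}/z_i$ to lie in $[1/\sqrt\alpha, \sqrt\alpha]$ on the ``wrong'' side, and hence makes $\bigl|\frac{z_i}{z_{i-1}+z_i} - \frac{z_i}{z_i+z_{i+1}}\bigr|$ strictly less than $1 - \frac{1}{1+\sqrt\alpha}$. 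So each negligible summand is at most $z_i \cdot \bigl(1-\frac{1}{1+\sqrt\alpha}\bigr)^\alpha$.

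Next I would sum over negligible indices. Since $\sum_{i=1}^n z_i = 1$, the total negligible contribution is at most $\bigl(1 - \frac{1}{1+\sqrt\alpha}\bigr)^\alpha \sum_{i \text{ negligible}} z_i \le \bigl(1-\frac{1}{1+\sqrt\alpha}\bigr)^\alpha$. Adding the significant contribution $\Psi_\alpha(z)$ gives exactly the claimed inequality $\Phi_\alpha(z) \le \Psi_\alpha(z) + \bigl|1 - \frac{1}{1+\sqrt\alpha}\bigr|^\alpha$. The absolute value in the statement is cosmetic since $\frac{1}{1+\sqrt\alpha} \in (0,1)$.

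The main obstacle is the uniform bound on $\bigl|\frac{z_i}{z_{i-1}+z_i} - \frac{z_i}{z_i+z_{i+1}}\bigr|$ for negligible $z_i$. I would argue by cases on which inequality in the definition of significance fails. Set $r = z_{i-1}/z_i$ and $s = z_{i+1}/z_i$, so the expression is $g(r,s) := \bigl|\frac{1}{1+r} - \frac{1}{1+s}\bigr|$. Being negligible means we are \emph{not} in the regime ``$r < 1/\sqrt\alpha$ and $s > \sqrt\alpha$'' and \emph{not} in ``$r > \sqrt\alpha$ and $s < 1/\sqrt\alpha$''. To maximize $g$ one wants $\frac1{1+r}$ near $1$ and $\frac1{1+s}$ near $0$ (or vice versa), i.e. $r$ small and $s$ large; but negligibility prevents both from being extreme simultaneously, so whichever of $\frac{1}{1+r}, \frac{1}{1+s}$ is the larger is at most $\frac{\sqrt\alpha}{1+\sqrt\alpha}$ or the smaller is at least $\frac{1}{1+\sqrt\alpha}$, either way giving $g(r,s) \le \frac{\sqrt\alpha}{1+\sqrt\alpha} - 0 = 1 - \frac{1}{1+\sqrt\alpha}$ (and symmetrically in the other case). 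A short verification of this elementary two-variable estimate, handled via the monotonicity of $t\mapsto \frac{1}{1+t}$, closes the argument; everything else is routine.
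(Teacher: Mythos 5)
Your proposal is correct and follows essentially the same route as the paper: the paper also reduces the claim to the pointwise bound $\bigl|\frac{z_i}{z_{i-1}+z_i}-\frac{z_i}{z_i+z_{i+1}}\bigr|\le 1-\frac{1}{1+\sqrt{\alpha}}$ for negligible $z_i$, proved by noting that a larger gap would force the maximum of the two ratios above $\frac{\sqrt{\alpha}}{1+\sqrt{\alpha}}$ and the minimum below $\frac{1}{1+\sqrt{\alpha}}$, which is exactly the definition of a significant component, and then sums over negligible indices using $\sum_i z_i=1$. Your case analysis in the variables $r,s$ is just a reparametrization of that same dichotomy, so there is nothing to add.
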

\begin{proof} Since $z_1+z_2+\dots+z_n=1$, it is sufficient to show that
\begin{equation} \label{negligible-eq}  \Big| \frac{z_i}{z_{i-1}+z_i}-\frac{z_i}{z_i+z_{i+1}} \Big| \ \ \le \ \  \Big|1-\frac{1}{1+\sqrt{\alpha}}\Big|, \end{equation}
for all  negligible  components  $z_i$. Assume that (\ref{negligible-eq}) does not hold. Since the ratios $z_i/(z_{i-1}+z_i)$ and $z_{i+1}/(z_i+z_{i+1})$ take values in $[0,1]$, we must have
\begin{equation} \label{significant-eq1}  \min\Big\{\frac{z_i}{z_{i-1}+z_i}, \frac{z_i}{z_i+z_{i+1}}  \Big\} \ \ < \ \ \frac{1}{1+\sqrt{\alpha}} \end{equation}
and
\begin{equation} \label{significant-eq2}  \max\Big\{\frac{z_i}{z_{i-1}+z_i}, \frac{z_i}{z_i+z_{i+1}}  \Big\} \ \ > \ \ \frac{\sqrt{\alpha}}{1+\sqrt{\alpha}}. \end{equation}
It remains to note that component $z_i$ fulfilling (\ref{significant-eq1}) and (\ref{significant-eq2}) is significant.
 \end{proof}

It is also useful to consider some special arrangements consisting of three successive components  $(z_{i-1}, z_i, z_{i+1})$ of the generic sequence $z\in \mathcal{S}$.

\begin{defi} Let \ $\textbf{z}= (z_0, z_1, \dots, z_{n+1})$ \ be an element of \ $\mathcal{S}$.   For \ $1\le i \le n$, \ we say that a subsequence \ $(z_{i-1}, z_i, z_{i+1})$ \ of \ $\textbf{z}$ is
\begin{itemize}[label=\raisebox{0.25ex}{\tiny$\bullet$}]
\item a \emph{split}, if \ \ $z_{i-1}   >   z_i   < z_{i+1}$,
\item a \emph{peak}, if \ \ $z_{i-1}<z_i>z_{i+1}$.
\end{itemize}\end{defi}

\noindent In what follows, let $\mathcal{S}'$  be the subset of all those $z\in \mathcal{S}$, which satisfy:

\begin{enumerate}[label=\arabic*.]
\item  $z_{i-1}\not= z_i$ \ for all \ $i\in \{1,2,\dots,n+1\}$, \smallskip

\item there are no split subsequences in $z$,\smallskip

\item there is exactly one peak in $z$, \smallskip

\item   there  is exactly one negligible component $z_{j_0}$ in $z$, and $z_{j_0}$ is the center of the unique peak $(z_{j_0-1}, z_{j_0}, z_{j_0+1})$.
\end{enumerate} 

\begin{prop} \label{S<S'} For $\alpha\ge1$, we have
$$\sup_{z\in \mathcal{S}}\Psi_{\alpha}(z) \ \ \le \ \ \sup_{z\in \mathcal{S}'}\Psi_{\alpha}(z).$$\end{prop}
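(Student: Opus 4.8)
The plan is to take an arbitrary $z\in\mathcal{S}$ and, without decreasing $\Psi_\alpha(z)$, transform it into a sequence lying in $\mathcal{S}'$. The transformation proceeds in stages, each of which enforces one of the four defining conditions of $\mathcal{S}'$ while preserving (or improving) the value of $\Psi_\alpha$ and not disturbing the conditions already achieved. First I would address condition~1: if $z_{i-1}=z_i$ for some $i$, then $z_i$ is negligible (since neither the relation $\sqrt{\alpha}z_{i-1}<z_i$ nor $z_{i-1}>\sqrt{\alpha}z_i$ can hold for $\alpha\ge 1$), so $z_i\notin\phi_\alpha(z)$ and the component $z_i$ contributes nothing to $\Psi_\alpha$; however, one cannot simply delete it, so I would instead perturb it slightly to break the tie — replace $z_i$ by $z_i\pm\varepsilon$ (and renormalise) in the direction that does not create a new equality, checking by continuity that for small $\varepsilon$ no previously significant component becomes negligible and $\Psi_\alpha$ changes by an arbitrarily small amount; then pass to the supremum. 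Alternatively, and more cleanly, one argues that $\mathcal{S}'$ is ``dense enough'': it suffices to prove $\sup_{\mathcal{S}}\Psi_\alpha\le\sup_{\mathcal{S}'}\Psi_\alpha+\epsilon$ for every $\epsilon>0$.

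The second and main stage is the elimination of split subsequences (condition~2) and the reduction to a single peak (condition~3). The key structural observation is that, between two consecutive peaks, the sequence must go down and then up, hence must contain a split; conversely, removing splits forces a unimodal shape. To eliminate a split $z_{i-1}>z_i<z_{i+1}$, I would use a ``smoothing'' or ``merging'' move on the valley: note that the valley component $z_i$ is negligible (both neighbours are much larger — if it were significant we would need $z_{i-1}>\sqrt\alpha z_i$ \emph{and} $z_i>\sqrt\alpha z_{i+1}$, contradicting $z_i<z_{i+1}$, or the reverse chain, equally contradictory), so $z_i$ contributes nothing to $\Psi_\alpha$. The plan is to push $z_i$ down to (near) zero, transferring its mass to the smaller of the two neighbouring ``slopes''; one must verify that this does not decrease the $\Psi_\alpha$-contributions of the \emph{significant} components on either side of the valley, and does not turn a significant component negligible. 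Since $z_i$ appears in the summand for $z_{i-1}$ only through the factor $z_i/(z_{i-1}+z_i)$ and in the summand for $z_{i+1}$ through $z_i/(z_i+z_{i+1})$, decreasing $z_i$ increases the first discrepancy $|z_{i-1}/(z_{i-2}+z_{i-1})-z_{i-1}/(z_{i-1}+z_i)|$ — wait, more carefully: decreasing $z_i$ makes $z_{i-1}/(z_{i-1}+z_i)\to1$, which moves it toward the typical value at a peak-adjacent significant point; a short monotonicity check in each configuration finishes this. Iterating, we are left with a sequence having no splits, i.e. a unimodal (single-peak) sequence, and by condition~1 the peak center is a strict maximum.

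Finally, for condition~4, once the sequence is unimodal with a unique peak $(z_{j_0-1},z_{j_0},z_{j_0+1})$, every component other than possibly $z_{j_0}$ sits on a strictly monotone run; on a long enough monotone run away from the peak the ratios $z_{i-1}/z_i$ need not all exceed $\sqrt\alpha$, so there can still be negligible components on the slopes. To remove them I would again use the perturbation/merging idea: a negligible $z_i$ on a monotone slope can be absorbed into its neighbours (collapsing it toward a neighbour's value and then re-separating, or redistributing its mass) without decreasing $\Psi_\alpha$, because it contributes nothing itself and the adjacent significant summands only improve; one must check this does not create a split or a second peak, which follows from doing the merge ``monotonically''. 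The leftover negligible component at the very peak $z_{j_0}$ cannot be removed (removing it would merge the two slopes and destroy the peak), so it stays, giving exactly condition~4.

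The step I expect to be the main obstacle is the split-elimination move: one has to choose the redistribution of the valley mass so that \emph{simultaneously} (a) no significant summand decreases, (b) no significant component becomes negligible, and (c) no new split or extra peak is introduced — and then argue the process terminates (e.g.\ that the number of splits strictly decreases, or that the number of local minima does). Getting a single move that respects all three constraints, in all the sign/size configurations of the four neighbours $(z_{i-2},z_{i-1},z_{i+1},z_{i+2})$, is where the real bookkeeping lies; everything else (the perturbation arguments for conditions~1 and~4, and passing to suprema and an $\epsilon$-limit) is routine by continuity of $\Phi_\alpha,\Psi_\alpha$ on the relevant simplices.
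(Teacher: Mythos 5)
Your overall strategy -- a chain of $\Psi_\alpha$-non-decreasing reductions enforcing conditions 1--4 one at a time -- is exactly the paper's, and you correctly identify the two key structural facts (the valley of a split is negligible; no splits plus no ties forces a single peak). However, there is a genuine gap precisely where you flag it: the split-elimination move. Your proposed move (``push the valley to near zero, transferring its mass to the smaller of the two neighbouring slopes'') is never verified, and you yourself note that checking simultaneously that no significant summand decreases, no significant component becomes negligible, and no new split or peak appears ``is where the real bookkeeping lies.'' That bookkeeping does not obviously close: redistributing mass onto a neighbour changes that neighbour's ratios with \emph{its} other neighbour, so adjacent significant summands can a priori decrease. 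The idea you are missing is that no local redistribution is needed at all: set the valley $z_{i_0}$ to exactly $0$. Since each summand of $\Psi_\alpha$ couples a component only to its two neighbours through scale-invariant ratios, an interior zero decouples the sequence into two blocks $\hat z^{(\leftarrow)}$ and $\hat z^{(\rightarrow)}$, each of which, after renormalising by its total mass $w^{(\leftarrow)}$ resp.\ $w^{(\rightarrow)}$, lies in $\mathcal{S}$; one then has $\Psi_\alpha(\hat z)=w^{(\leftarrow)}\Psi_\alpha(\hat z^{(\leftarrow)}/w^{(\leftarrow)})+w^{(\rightarrow)}\Psi_\alpha(\hat z^{(\rightarrow)}/w^{(\rightarrow)})\le\max$ of the two, and one simply \emph{discards the worse half}. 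This kills all three of your constraints (a)--(c) at once and makes termination trivial (the sequence strictly shortens).

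Two smaller points. First, your hesitation about condition~1 (``one cannot simply delete it'') is unfounded: the paper does delete the tied component and rescales by $(1-z_{i_0})^{-1}$; this works because the ratios defining both significance and the summands are scale-invariant, every surviving significant summand is multiplied by $(1-z_{i_0})^{-1}>1$, and since $z_{i_0-1}=z_{i_0}$ the new left neighbour of $z_{i_0+1}$ has the same value as the old one, so significance is preserved. Your $\varepsilon$-perturbation alternative could likely be made to work but is strictly harder than what it replaces. Second, your treatment of condition~4 is essentially the paper's (remove-and-rescale of negligible components on the monotone slopes, where the single-peak structure guarantees nothing is disturbed), so that part is fine once the split step is repaired.
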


\begin{proof} Let us start by outlining the structure of the proof.  
Pick an arbitrary $z\in \mathcal{S}$.  We will gradually improve $z$ by a series of subsequent combinatorial reductions
$$z \longrightarrow z^{(1)} \longrightarrow z^{(2)} \longrightarrow z^{(3)} \longrightarrow z^{(4)},$$
such that
\begin{center}$\Psi_{\alpha}(z) \  \le \  \Psi_{\alpha}(z^{(i)}) \  \le \  \Psi_{\alpha}(z^{(j)})$   \ \ \ \   for \ \ \ \ $1\le i\le j\le 4$, \end{center}
and  $z^{(i)}$ will satisfy the requirements from $1.$ to $i.$ in the definition of $\mathcal{S}'$. This will give $\Psi_{\alpha}(z) \le \Psi_{\alpha}(z^{(4)})$ for some  $z^{(4)}\in \mathcal{S}'$ and the claim will be proved.

\smallskip

\medskip

1. $z \rightarrow z^{(1)} $. Put $z= (z_0, z_1, \dots, z_{n+1})$. If $z_{i-1}\not=z_i$ for all $i\in \{1,2,\dots, n+1\}$, then we are done.
Otherwise, let $i_0$ be the smallest index without this property. As $z_0=0$ and $z_1$ is strictly positive, we must have $i_0>1$. Analogously, we have $i_0<n+1$.
Consequently, observe that $z_{i_0-1}$ and $z_{i_0}$ are negligible. Examine the transformation $z\mapsto \tilde{z}$,
\begin{equation}\label{remove/rescale}(\dots, z_{i_0-1}, z_{i_0}, z_{i_0+1}, \dots) \ \longrightarrow \ w^{-1}\cdot (\dots, z_{i_0-1}, z_{i_0+1}, \dots),\end{equation}
$$w \ = \ 1-z_{i_0},$$
which removes $z_{i_0}$ and rescales the remaining elements. If $z_{i_0+1}\in \phi_{\alpha}(z)$, then $w^{-1}z_{i_0+1}$ will remain a significant component of $\tilde{z}$.
The contribution of  $z_{i_0+1}$ (and all the other significant components of $z$) to the overall sum will grow by a factor of $w^{-1}>1$. The contribution of  $z_{i_0-1}$ to $\Psi_{\alpha}(z)$ is zero and it can only increase if  $z_{i_0-1}$ becomes significant. Therefore $\Psi_{\alpha}(z) \le \Psi_{\alpha}(\tilde{z})$. After a finite number of such operations, we obtain a sequence $z^{(1)}$ for which $1.$ holds. \smallskip
\medskip

2. $ z^{(1)} \rightarrow z^{(2)}$. Set $z^{(1)}=(z_i^{(1)})_{i=0}^{n+1}$ and suppose that $(z_{i_0-1}^{(1)}, z_{i_0}^{(1)}, z_{i_0+1}^{(1)})$ 
 is a split for some $i_0\in \{2,3,\dots, n-1\}$ -- by the definition of split configuration, $i_0$ must be greater than $1$ and smaller  than $n$. Accordingly, note that $ z_{i_0}^{(1)}$ is negligible and consider the preliminary modification $z^{(1)}\mapsto \hat{z}^{(1)}$ given by
 $$(\dots, z_{i_0-1}^{(1)}, z_{i_0}^{(1)}, z_{i_0+1}^{(1)}, \dots) \ \longrightarrow \ (\dots, z_{i_0-1}^{(1)},0, z_{i_0+1}^{(1)}, \dots),$$
 which changes $z_{i_0}^{(1)}$ into $0$ (so $\hat{z}^{(1)}\not\in \mathcal{S}$: we will handle this later). 
 As $z_{i_0-1}^{(1)}>z_{i_0}^{(1)}$, we have
 \begin{equation} \label{LEFTzi0} \Bigg|\frac{z_{i_0-1}^{(1)}}{z_{i_0-2}^{(1)}+z_{i_0-1}^{(1)}}-\frac{z_{i_0-1}^{(1)}}{z_{i_0-1}^{(1)}+z_{i_0}^{(1)}}\Bigg| \ \ \ < \ \ \ \Bigg|\frac{z_{i_0-1}^{(1)}}{z_{i_0-2}^{(1)}+z_{i_0-1}^{(1)}}-1\Bigg|,\end{equation}
 if only  ${z_{i_0-1}^{(1)}\in \phi_{\alpha}(z^{(1)})}$.
 Similarly, as $z_{i_0}^{(1)}<z_{i_0+1}^{(1)}$,  we get
 \begin{equation} \label{RIGHTzi0}\Bigg|\frac{z_{i_0+1}^{(1)}}{z_{i_0}^{(1)}+z_{i_0+1}^{(1)}}-\frac{z_{i_0+1}^{(1)}}{z_{i_0+1}^{(1)}+z_{i_0+2}^{(1)}}\Bigg| \ \ < \ \ \ \Bigg|1-\frac{z_{i_0+1}^{(1)}}{z_{i_0+1}^{(1)}+z_{i_0+2}^{(1)}}\Bigg|,\end{equation}
as long as ${z_{i_0+1}^{(1)}\in \phi_{\alpha}(z^{(1)})}$. By (\ref{LEFTzi0}) and (\ref{RIGHTzi0}), with a slight abuse of notation (the domain of $\Psi_{\alpha}$ formally  does not  contain $\hat{z}^{(1)}$, but we may extend the definition for $\Psi_{\alpha}(\hat{z}^{(1)})$ in a straightforward way), we can write $\Psi_{\alpha}(z^{(1)}) \le \Psi_{\alpha}(\hat{z}^{(1)})$. Now, let us denote
$$\hat{z}^{(1, \leftarrow)} \ = \ (0, \hat{z}_{1}^{(1)}, \dots, \hat{z}_{i_0-1}^{(1)}, 0)$$
 and 
 $$\hat{z}^{(1, \rightarrow)} \ = \  (0,  \hat{z}_{i_0+1}^{(1)}, \dots,  \hat{z}_{n}^{(1)}, 0).$$ 
In other words, sequences $\hat{z}^{(1, \leftarrow)}$ and $\hat{z}^{(1, \rightarrow)}$ are two consecutive parts of $\hat{z}^{(1)}$ and we can restore
$\hat{z}^{(1)}$ by glueing their corresponding zeroes together. Moreover, after normalizing them by the weights 
\begin{center}$w^{(1, \leftarrow)} =  \sum_{i=1}^{i_0-1}\hat{z}_{i}^{(1)}$  \ \ \ \ and \ \ \ \ $w^{(1, \rightarrow)}= \sum_{i=i_0+1}^{n}\hat{z}_{i}^{(1)}$, \end{center}
we get \ $(w^{(1, \leftarrow)})^{-1} \hat{z}^{(1, \leftarrow)}, \ (w^{(1, \rightarrow)})^{-1}\hat{z}^{(1, \rightarrow)}\in \mathcal{S}$. \ Next, in this setup, we are left with
\[\begin{aligned}
\Psi_{\alpha}(\hat{z}^{(1)}) \ \ \ =&  \ \ \ w^{(1, \leftarrow)}\cdot \Psi_{\alpha}\left( \frac{\hat{z}^{(1, \leftarrow)}}{w^{(1, \leftarrow)}}\right) \\
+& \ \ \  w^{(1, \rightarrow)}\cdot \Psi_{\alpha}\left(\frac{\hat{z}^{(1, \rightarrow)}}{w^{(1, \rightarrow)}}\right) \\
\le & \ \ \ \max \left\{\Psi_{\alpha}\left( \frac{\hat{z}^{(1, \leftarrow)}}{w^{(1, \leftarrow)}}\right), \Psi_{\alpha}\left(\frac{\hat{z}^{(1, \rightarrow)}}{w^{(1, \rightarrow)}}\right) \right\}, \end{aligned} \]
where we have used $w^{(1, \leftarrow)}+w^{(1, \rightarrow)}=1$. Let
$$\tilde{z}^{(1)} \ = \ \arg\max \left\{\Psi_{\alpha}(z) : \ z \in \left\{ \frac{\hat{z}^{(1, \leftarrow)}}{w^{(1, \leftarrow)}}, \frac{\hat{z}^{(1, \rightarrow)}}{w^{(1, \rightarrow)}}\right\} \right\}.$$
By the construction, we have $\Psi_{\alpha}(z^{(1)}) \le \Psi_{\alpha}(\tilde{z}^{(1)})$,
the new sequence $\tilde{z}^{(1)}$ is shorter than $z^{(1)}$ and $\tilde{z}^{(1)}$ contains less   split configurations than $z^{(1)}$. After repeating this procedure ($z^{(1)}\mapsto \tilde{z}^{(1)}$) multiple times, we acquire a new sequence $z^{(2)}$ obeying $1.$ and $2.$ \smallskip

\medskip

3.  $z^{(2)} \rightarrow z^{(3)}$. Surprisingly, it is enough to put $z^{(3)}=z^{(2)}$. Indeed, we can show that sequence $z^{(2)}$ already satisfies the third condition. First, suppose that $(z_{j_0-1}^{(2)}, z_{j_0}^{(2)}, z_{j_0+1}^{(2)})$ and $(z_{j_1-1}^{(2)}, z_{j_1}^{(2)}, z_{j_1+1}^{(2)})$ are two different peaks with indices $j_0<j_1$. Hence, as $z_{j_0}^{(2)}>z_{j_0+1}^{(2)}$ and $z_{j_1-1}^{(2)}<z_{j_1}^{(2)}$, there is at least one point
$i_0\in\{j_0+1, \dots, j_1-1\}$ at which we are forced to ``flip'' the direction of the previous inequality sign:
$$z_{j_0-1}^{(2)}< z_{j_0}^{(2)}>z_{j_0+1}^{(2)}>\dots > z_{i_0}^{(2)}<\dots<z_{j_1-1}^{(2)}< z_{j_1}^{(2)}>z_{j_1+1}^{(2)}.$$ 
Equivalently, this means that $(z_{i_0-1}^{(2)}, z_{i_0}^{(2)}, z_{i_0+1}^{(2)})$ is a split configuration. This contradicts our initial assumptions about $z^{(2)}$ (the requirement $2.$ is not met) and proves that there is at most one peak in $z^{(2)}$. Second, we have 
\begin{center}$0=z_{0}^{(2)}<z_1^{(2)}$  \ \ \ and \ \ \  $z_{n}^{(2)}>z_{n+1}^{(2)}=0$,\end{center}
so there exists a point $j_0$ at which the direction of the inequalities must be changed from `$<$' to `$>$'. Thus, there is at least one peak in $z^{(2)}$. \smallskip
\medskip

4. $ z^{(3)} \rightarrow z^{(4)}$. Let $z^{(3)}=(z_i^{(3)})_{i=0}^{n+1}$ and assume that  $(z_{j_0-1}^{(3)}, z_{j_0}^{(3)}, z_{j_0+1}^{(3)})$ is the unique peak of $z^{(3)}$:
\begin{equation}\label{1PEAK} 0<z_1^{(3)}<\dots<z_{j_0-1}^{(3)} < z_{j_0}^{(3)} > z_{j_0+1}^{(3)} > \dots > z_n^{(3)}>0.\end{equation}
Further reasoning is similar to the previous ones (from points $1.$ and $2.$), so we will just sketch it.  
If the requirement $4.$ is not satisfied, pick a negligible component $z_{i_0}^{(3)}$ with $i_0\not=j_0$. Next, apply the transformation  $z^{(3)}\mapsto  \tilde{z}^{(3)}$ defined by (\ref{remove/rescale}), i.e. remove $z_{i_0}^{(3)}$ and rescale the remaining components. Thanks to the `single peak structure' (\ref{1PEAK}), all the significant components of $z^{(3)}$ remain significant for $\tilde{z}^{(3)}$. The terms associated with components  $z^{(3)}_i \in \phi_{\alpha}(z^{(3)})\setminus \{z_{i_0-1}^{(3)}, z_{i_0+1}^{(3)} \}$ are not changed (and their contribution grows after the rescaling). The summands corresponding to $z_{i_0-1}^{(3)}$ and  $z_{i_0+1}^{(3)}$  can only increase,  just as in (\ref{LEFTzi0}) and (\ref{RIGHTzi0}).
Therefore $\Psi_{\alpha}(z^{(3)}) \le \Psi_{\alpha}(\tilde{z}^{(3)})$. After several repetitions and discarding of all unnecessary negligible components (beyond the central $z_{j_0}$), we finally obtain the desired sequence $z^{(4)}\in \mathcal{S}'$.
\end{proof}

We proceed to the proof of our main result.

\begin{proof}[Proof of Theorem \ref{2/e}] We start with the lower estimate, for which the argument is simpler. By Proposition \ref{SUP-->PHI} and reformulation (\ref{sup-NEW}), for $\alpha > 2$ we have 
\begin{align*}
\alpha\cdot \sup_{(X,Y)\in \mathcal{C}}\mathbb{E}|X-Y|^{\alpha}  \ \ \ =&  \ \ \  \alpha \cdot \sup_{z\in \mathcal{S}}\Phi_{\alpha}(z) \\
\ge& \ \ \  \alpha \cdot \Phi_{\alpha}\left(0, \frac{1}{\alpha}, \frac{\alpha-2}{\alpha}, \frac{1}{\alpha}, 0\right)  \\
= & \ \ \ \alpha \cdot \frac{2}{\alpha}\left|1-\frac{1}{\alpha-1} \right|^{\alpha} \ \xrightarrow{\alpha \rightarrow \infty} \ \ \frac{2}{e}.  \end{align*} 
Now we turn our attention to the upper estimate. By  Propositions \ref{onlySignificant} and \ref{S<S'}, we get
$$\alpha\cdot \sup_{(X,Y)\in \mathcal{C}}\mathbb{E}|X-Y|^{\alpha}  \ \ \ \le \ \ \ \alpha\cdot\Bigg( \Big|1-\frac{1}{1+\sqrt{\alpha}}\Big|^{\alpha} \  +  \ \sup_{z\in \mathcal{S}'}\Psi_{\alpha}(z)\Bigg).$$
Next, because of
$$\lim_{\alpha \to \infty} \alpha\cdot \Big|1-\frac{1}{1+\sqrt{\alpha}}\Big|^{\alpha} \ = \ 0,$$
it is enough to provide an asymptotic estimate for  $\alpha\cdot\sup_{z\in \mathcal{S}'}\Psi_{\alpha}(z)$. 
 Fix an arbitrary $z= (z_0, z_1,\dots, z_{n+1})\in \mathcal S'$ and let $z_{j_0}$ be the center of the unique peak contained in $z$: 
  $$0<z_1<\dots<z_{j_0-1} < z_{j_0} > z_{j_0+1} > \dots > z_n>0.$$
 As $z_{j_0}$ is the only negligible component  contained in $z$, we have
 \begin{center}$\sqrt{\alpha}\cdot z_i<z_{i+1}$   \ \ \ \   for \ \ \ \ $1\le i\le j_{0}-1$, \end{center}
 and
 \begin{center}$z_{i-1}>\sqrt{\alpha}\cdot z_i$   \ \ \ \   for \ \ \ \ $j_0+1\le i\le n$. \end{center}
 In particular, we get  $0\le z_{j_0-1}, z_{j_0+1} < 1/\sqrt{\alpha}$.  Consequently, we can write $\Psi_{\alpha}(z)  =  A + B + C$, where 
 $$A \  =  \ \sum_{|i-j_0|>2} z_i  \Big| \frac{z_i}{z_{i-1}+z_i}-\frac{z_i}{z_i+z_{i+1}} \Big|^{\alpha},$$
 $$B \  = \  z_{i_0-2}  \Big| \frac{ z_{i_0-2} }{ z_{i_0-3} + z_{i_0-2} }-\frac{ z_{i_0-2} }{ z_{i_0-2} + z_{i_0-1} } \Big|^{\alpha} 
 \ + \ z_{i_0+2}  \Big| \frac{ z_{i_0+2} }{ z_{i_0+1} + z_{i_0+2} }-\frac{ z_{i_0+2} }{ z_{i_0+2} + z_{i_0+3} } \Big|^{\alpha}$$
 and
 $$C \ = \  z_{i_0-1}  \Big| \frac{ z_{i_0-1} }{ z_{i_0-2} + z_{i_0-1} }-\frac{ z_{i_0-1} }{ z_{i_0-1} + z_{i_0} } \Big|^{\alpha} 
 \ + \ z_{i_0+1}  \Big| \frac{ z_{i_0+1} }{ z_{i_0} + z_{i_0+1} }-\frac{ z_{i_0+1} }{ z_{i_0+1} + z_{i_0+2} } \Big|^{\alpha}.$$
 We will examine these three parts separately.

 \smallskip

\noindent\emph{The term $A$.} Since $z_i/(z_{i-1}+z_i)$ and $z_{i}/(z_i+z_{i+1})$ belong to $[0,1]$, we may write
 \begin{align*}
A \ \ \ \le&  \ \ \   \sum_{i=1}^{j_0-3}z_i  \ \ + \ \ \sum_{i=j_0+3}^{n}z_i \\
<& \ \ \ z_{j_0-3}\cdot \sum_{i=0}^{j_0-4} \left(\frac{1}{\sqrt{\alpha}}\right)^i \ \  + \ \ z_{j_0+3}\cdot \sum_{i=0}^{n-j_0-3} \left(\frac{1}{\sqrt{\alpha}}\right)^i  \\
< & \ \ \  (z_{j_0-1}+z_{j_0+1})\cdot \frac{1}{\alpha} \cdot \sum_{i=0}^{\infty} \left(\frac{1}{\sqrt{\alpha}}\right)^i
\\
< & \ \ \  \frac{2}{\alpha\sqrt{\alpha}} \cdot \sum_{i=0}^{\infty} \left(\frac{1}{\sqrt{\alpha}}\right)^i  \ \  =  \ \ \frac{2}{\alpha(\sqrt{\alpha}-1)} \end{align*} 
 and hence
 $$\alpha\cdot A \ < \ \frac{2}{\sqrt{\alpha}-1} \ \xrightarrow{\alpha \rightarrow \infty}  \ 0.$$

\smallskip

\noindent\emph{The term $B$.}  
 We have 
  \begin{align*}
B  \ \ \ \le&  \ \ \  
  z_{i_0-2}  \Big|1-\frac{ z_{i_0-2} }{ z_{i_0-2} + z_{i_0-1} } \Big|^{\alpha} \ \ + \ \  z_{i_0+2}  \Big|\frac{ z_{i_0+2} }{ z_{i_0+1} + z_{i_0+2}}-1\Big|^{\alpha}  \\
<& \ \  z_{i_0-2}  \left|1-\frac{ z_{i_0-2} }{ z_{i_0-2} + \frac{1}{\sqrt{\alpha}} } \right|^{\alpha} \ \ + \ \  z_{i_0+2}  \left|\frac{ z_{i_0+2} }{ \frac{1}{\sqrt{\alpha}} + z_{i_0+2}}-1\right|^{\alpha}  \\
\le & \ \ \ 2\cdot \sup_{x\in [0,1]} x\left|1-\frac{x}{x+\frac{1}{\sqrt{\alpha}}} \right|^{\alpha} \ \ = \ \ \frac{2}{\sqrt{\alpha}(\alpha-1)}\cdot \left(1-\frac{1}{\alpha} \right)^{\alpha}.
 \end{align*} 
 This yields 
  $$\alpha\cdot B \ < \ \frac{2\sqrt{\alpha}}{\alpha-1}\cdot \left(1-\frac{1}{\alpha} \right)^{\alpha} \ \xrightarrow{\alpha \rightarrow \infty}  \ 0.$$
  
\noindent \emph{The term $C$.} Finally, we observe that 
  \begin{align*}
C  \ \ \ \le&  \ \ \  
  z_{i_0-1}  \Big|1-\frac{ z_{i_0-1} }{ z_{i_0-1} + z_{i_0} } \Big|^{\alpha} \ \ + \ \  z_{i_0+1}  \Big|\frac{ z_{i_0+1} }{ z_{i_0} + z_{i_0+1}}-1\Big|^{\alpha}  \\
\le& \ \  z_{i_0-1}  \left|1-z_{i_0-1}   \right|^{\alpha} \ \ + \ \  z_{i_0+1}  \left|z_{i_0+1}-1\right|^{\alpha}  \\
\le & \ \ \ 2\cdot \sup_{x\in [0,1]} x\left|1-x \right|^{\alpha} \ \ = \ \ \frac{2}{\alpha+1}\cdot \left(1-\frac{1}{\alpha+1}\right)^{\alpha}.
 \end{align*} 
 Consequently, we obtain
   $$\alpha\cdot C \ \le \ \frac{2\alpha}{\alpha+1}\cdot \left(1-\frac{1}{\alpha+1} \right)^{\alpha} \ \xrightarrow{\alpha \rightarrow \infty}  \ \frac{2}{e}.$$
   
   \smallskip
   
\noindent The estimates for $A$, $B$ and $C$ give the desired upper bound. The proof is complete.
\end{proof}

\setlength{\baselineskip}{2ex}

\bibliographystyle{plain}
\bibliography{ASYMbib}

\end{document}